\newtheorem{theorem}{Theorem}
\newtheorem{proposition}[theorem]{Proposition}
\newtheorem{definition}[theorem]{Definition}
\newtheorem{corollary}[theorem]{Corollary}
\newtheorem{lemma}[theorem]{Lemma}
\newtheorem{remark}[theorem]{Remark}
\newcommand{\dummy}{\,\bullet\,}
\newcommand{\R}{\mathbb{R}}
\newcommand{\C}{\mathbb{C}}
\newcommand{\N}{\mathbb{N}}
\newcommand{\K}{\mathbb{K}}
\newcommand{\mO}{\setminus\{0\}}
\DeclareMathOperator{\rad}{rad}
\newcommand{\generate}[1]{\left\langle#1\right\rangle}
\newcommand{\sphere}{\mathbb{S}}
\newcommand{\proj}{\mathrm{proj}}
\newcommand{\Spec}{\mathrm{Spec}}
\newcommand{\abs}[1]{\left\lvert #1 \right\rvert}
\newcommand{\fullref}[1]{(\ref{#1})}
\title{Horizontal and Vertical Regularity of Elastic Wave Geometry}
\author{Joonas Ilmavirta, Pieti Kirkkopelto and Antti Kykkänen}
\date{\today}
\begin{document}

\begin{abstract}
The elastic properties of a material are encoded in a stiffness tensor field and the propagation of elastic waves is modeled by the elastic wave equation. We characterize analytic and algebraic properties a general anisotropic stiffness tensor field has to satisfy in order for Finsler-geometric methods to be applicable in studying inverse problems related to imaging with elastic waves.
\end{abstract}

\maketitle

\section{Introduction}

The propagation of elastic waves is accurately described by geodesics in a certain Finsler geometry, and we study how the regularity of this geometry depends on the properties of the stiffness tensor field that describes the elastic material. There are two types of seismic waves, the qP- and qS-waves, and our main focus will be on the Finsler structure arising from the qP-waves, which we will refer to as the elastic wave geometry.

In two dimensions we give a complete characterization of stiffness tensors that give rise to an elastic wave geometry that is $k$-times continuously differentiable along the manifold and smooth on the fibers, and in dimensions greater than two we show that there is an open neighborhood of the isotropic stiffness tensors where the same regularity continues to hold. We also show that for complex stiffness tensors the slowness surface is always singular as a scheme in dimensions $d\notin\{2,4,8\}$. We apply the regularity results to two inverse problems; we prove the injectivity of the geodesic X-ray transform on scalar fields in some specific low regularity elastic geometries relevant for seismology and as a second application under suitable assumptions the determination of low regularity elastic wave geometries up to a Finslerian isometry from travel time data.

\subsection{Main Results}

The horizontal regularity of the elastic wave geometry associated to a stiffness tensor field $c$ depends on the regularity of $c$ along the manifold, while the vertical regularity depends on the algebraic properties of $c$.

\subsubsection{Horizontal Regularity}
Our main result concerning horizontal regularity is the following.

\begin{theorem}[{Proven in Section~\ref{sec:thm-1-proof}}]
\label{hregmain}
    Let $M\subset\R^n$ be a smooth domain with or without boundary and let $c\in C^k(M)$ be a stiffness tensor field such that the qP-branch of the slowness surface associated to $c$ is globally separate. Then the associated Finsler function $F^c_{qP}$ is of class $C^k$ along the manifold and smooth on the fibers.
\end{theorem}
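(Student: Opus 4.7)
The approach is to reduce horizontal regularity of $F^c_{qP}$ to perturbation theory for a simple eigenvalue of the Christoffel matrix attached to $c$. Recall that at each $x\in M$ and covector $\xi$ one forms the symmetric Christoffel matrix $\Gamma(x,\xi)$ whose entries are polynomial of degree two in $\xi$ with coefficients that are $C^k$ in $x$. The qP-branch of the slowness surface is cut out by the largest eigenvalue $\lambda_{qP}(x,\xi)$ of $\Gamma(x,\xi)$, and the qP Finsler co-function is, up to a positive root and homogeneity, a fixed smooth function of this distinguished eigenvalue. So the main task is to show that $\lambda_{qP}$ is $C^k$ in $x$ and smooth in $\xi$, and then transfer this regularity to $F^c_{qP}$.

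The assumption that the qP-branch of the slowness surface is globally separate translates exactly to $\lambda_{qP}(x,\xi)$ being a simple eigenvalue of $\Gamma(x,\xi)$ for every $x\in M$ and every nonzero $\xi$. Setting $p(x,\xi,\lambda)=\det(\lambda I-\Gamma(x,\xi))$, simplicity means $\partial_\lambda p\neq 0$ along the graph of $\lambda_{qP}$. The implicit function theorem applied to $p=0$ in the variable $\lambda$ then gives $\lambda_{qP}$ as a function which is jointly of class $C^k$ in $(x,\xi)$, because $p$ is $C^k$ in $x$ and polynomial (hence smooth) in the remaining variables. This yields the horizontal $C^k$ regularity.

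For fiberwise smoothness I would iterate. Fix $x$ and view the identity $p(x,\xi,\lambda_{qP}(x,\xi))=0$ as a relation in $\xi$ alone. Since $p$ is polynomial in $\xi$ for fixed $x$ and $\partial_\lambda p\neq 0$, repeated implicit differentiation expresses every $\xi$-derivative of $\lambda_{qP}(x,\cdot)$ as a rational function in derivatives of $p$ and lower-order derivatives of $\lambda_{qP}$. These rational expressions are defined to all orders and produce smoothness of $\lambda_{qP}(x,\cdot)$ on the punctured fiber. Passing from $\lambda_{qP}$ to $F^c_{qP}$ is then legitimate because positive definiteness of the stiffness tensor forces $\lambda_{qP}>0$ away from the zero section, so the relevant root and reciprocal operations preserve both the $C^k$ manifold regularity and the fiberwise smoothness.

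The main obstacle I anticipate is not the analytic content, which is a clean application of the implicit function theorem together with the polynomial $\xi$-dependence, but the careful bookkeeping that turns the Finsler function $F^c_{qP}$ into an explicit expression in $\lambda_{qP}(x,\xi)$. In particular one needs to verify that the global separation hypothesis is equivalent to everywhere simplicity of $\lambda_{qP}$ as an eigenvalue of $\Gamma$, and that the map from the distinguished eigenvalue to the Finsler co-function (and then by Legendre/polar duality to $F^c_{qP}$ itself) is regularity-preserving on the fibers. Once that dictionary is in place, the regularity statement follows directly from the implicit function theorem applied to the characteristic polynomial.
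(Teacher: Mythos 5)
There is a genuine gap, and it sits exactly at the step you set aside as ``bookkeeping.'' Your treatment of $\lambda_{qP}$ is fine and matches the paper's first step: global separateness means the top eigenvalue is simple, and the implicit function theorem applied to $\det(\lambda I-\Gamma_c(x,\xi))=0$ gives $C^k$ dependence on $x$ and, by iterated implicit differentiation in $\xi$ for fixed $x$, smoothness on the fibers. But $F^c_{qP}$ is \emph{not} an explicit algebraic expression (``root and reciprocal operations'') in $\lambda_{qP}(x,\xi)$: the eigenvalue lives on $T^*M$, the Finsler function lives on $TM$, and the passage between them is the Legendre transform $F^c_{qP}=(\ell_{\frac12\lambda_{qP}^c}^{-1})^*(\lambda_{qP}^c)^{1/2}$, where $\ell_{\frac12\lambda_{qP}^c}(x,p)=(x,\partial_p(\tfrac12\lambda_{qP}^c)(x,p))$ is the fiberwise gradient map. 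The inverse of this map is only implicitly defined, and the whole difficulty of the theorem is to show that inversion does not destroy the mixed regularity: a priori, the standard inverse function theorem applied to a map that is $C^k$ in $x$ and smooth in $p$ only yields a jointly $C^k$ inverse, and the formulas for higher derivatives of the inverse mix base and fiber derivatives, so vertical smoothness of the inverse (and of the composition defining $F^c_{qP}$) is not automatic.

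This is precisely what the paper's machinery is built for: it introduces anisotropic classes $C^k_l$ (which control \emph{mixed} derivatives $\partial_x^\alpha\partial_\xi^\beta$, not just separate regularity in each variable, and your argument only gives the latter for $\lambda_{qP}$), proves an anisotropic implicit function theorem to get $\lambda_{qP}^c\in C^k_\infty(T^*M\setminus 0)$, and then proves an anisotropic inverse function theorem on fiber bundles (by an induction on the order of derivatives of the inverse) to show that $\ell_{\frac12\lambda_{qP}^c}^{-1}$ is again of class $C^k$ horizontally and smooth vertically; a closure-under-composition lemma then finishes the proof. Without some substitute for that inverse-function step — i.e., an actual argument that the Legendre map's inverse and the resulting pullback preserve ``$C^k$ in $x$, $C^\infty$ in the fiber'' — your proposal asserts the conclusion of the hardest part of the theorem rather than proving it.
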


In order to prove Theorem~\ref{hregmain} we introduce a class of anisotropic function spaces and prove anisotropic implicit and inverse function theorems on general smooth fiber bundles, a result we consider of interest in its own right. For details we refer to Section~\ref{sec:horizontal-regularity}.

\subsubsection{Vertical Regularity}

In light of Theorem~\ref{hregmain}, the vertical regularity of the elastic wave geometry depends on whether the $qP$-branch of the slowness surface is separate. 

In two dimensions we have a complete characterization of stiffness tensors with separate slowness surfaces:

\begin{theorem}[{Proven in Section~\ref{sec:singularity-2D}}]
\label{2dseparate}
    Let $c$ be a two-dimensional stiffness tensor. Then there exist explicit polynomials $R$ and $D$ in the components of $c$ such that the slowness surface of $c$ is real and separate if and only if
    \begin{equation}
        R(c)\neq 0\quad \text{and}\quad D(c)\ge 0.
    \end{equation}
\end{theorem}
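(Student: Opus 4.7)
The plan is to make the separateness and realness conditions explicit by working directly with the Christoffel matrix. First, I would expand $\Gamma(\xi)_{ij} = c_{ikjl}\xi_k\xi_l$ for $\xi = (\xi_1, \xi_2) \in \R^2$, so that the three independent entries $\Gamma_{11}, \Gamma_{22}, \Gamma_{12}$ become homogeneous quadratic forms in $\xi$ whose coefficients are explicit linear combinations of the components of $c$. The slowness surface is the vanishing locus of $\det(\Gamma(\xi) - I)$, and at a direction $\hat\xi \in \sphere^1$ its two branches correspond to the two eigenvalues $\mu_1(\hat\xi) \le \mu_2(\hat\xi)$ of the symmetric matrix $\Gamma(\hat\xi)$. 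The branches meet at $\hat\xi$ precisely when these eigenvalues coincide, equivalently when $\Gamma(\hat\xi)$ is a scalar multiple of $I$. Thus separateness of the two sheets is equivalent to the two binary quadratic forms
\[
P(\xi) := \Gamma_{11}(\xi) - \Gamma_{22}(\xi), \qquad Q(\xi) := \Gamma_{12}(\xi)
\]
having no common zero on $\R^2 \setminus \{0\}$, while realness is equivalent to $\Gamma(\hat\xi)$ being positive definite for every $\hat\xi \neq 0$.

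Next, I would encode these two conditions as polynomial statements in $c$. For separateness, I would work with the non-negative binary quartic form $\Delta(\xi) := P(\xi)^2 + 4 Q(\xi)^2$, which vanishes exactly on the common zeros of $P$ and $Q$. Its strict positivity away from the origin is equivalent to separateness, and by the classical invariant theory of binary quartic forms it can be characterized by explicit polynomial conditions in the coefficients of $\Delta$, and hence in $c$. A natural candidate for the non-vanishing factor $R(c)$ is the resultant $\mathrm{Res}(P, Q)$, whose vanishing records the existence of a common complex zero of $P$ and $Q$; a discriminant-type invariant of $\Delta$ then captures whether any such shared root is real, yielding $D(c)$. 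For realness, I would enforce $\mathrm{tr}(\Gamma(\hat\xi)) > 0$ and $\det(\Gamma(\hat\xi)) > 0$ as positivity conditions for a binary quadratic and quartic form in $\hat\xi$, respectively, and absorb the corresponding polynomial inequalities on $c$ into the final definitions of $R$ and $D$.

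Finally I would verify both implications. In the forward direction, starting from a real and separate slowness surface, the identifications above yield $R(c) \neq 0$ and $D(c) \geq 0$. In the backward direction, these polynomial inequalities are translated back via the invariant-theoretic characterization into strict positivity of $\Delta$ and positive definiteness of $\Gamma$, hence realness and separateness of the slowness surface.

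I expect the main obstacle to be the careful bookkeeping of degenerate cases: when $P$ or $Q$ vanishes identically, when $P$ and $Q$ are proportional so that they trivially share a projective zero, or when the positivity of $\Gamma$ is only marginal. These correspond to boundary strata where $R(c) = 0$ or $D(c) = 0$, and one must check that no real and separate slowness surface actually lies on the wrong side of these inequalities. Handling these strata is what ultimately pins down the precise form of $R$ and $D$ and their sign conventions.
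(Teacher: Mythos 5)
Your proposal follows essentially the same route as the paper: coincidence of the two branches is reduced to a common nonzero real root of the two binary quadratics $\Gamma_{11}-\Gamma_{22}$ and $\Gamma_{12}$, detected by their resultant (giving $R$) together with a discriminant-type condition deciding whether that root is real (giving $D$). The only differences of detail are that the paper computes the discriminants $D_1,D_2$ of the two quadratics separately, notes they must share a sign once the resultant vanishes, and sets $D=D_1+D_2$, rather than invoking invariant theory of the quartic $P^2+4Q^2$; and the paper's $R$ and $D$ do not absorb positivity of $\Gamma$ (trace/determinant conditions) as your last step suggests.
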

For the explicit expressions for $R$ and $D$ we refer to Section~\ref{sec:vertical-regularity}. In dimensions $n\notin\{2,4,8\}$ we obtain the following two-part result.

\begin{theorem}[{Proven in Section~\ref{sec:singularity-3D}}]
\label{highdsing}
    In dimensions $n\notin\{2,4,8\}$,
    \begin{enumerate}
        \item \label{item:separate-nbhood} for every real isotropic stiffness tensor $c_0$ there is a Euclidean open neighborhood $U(c_0)$ of real stiffness tensors such that if $c\in U(c_0)$, then the qP-branch of the slowness surface of $c$ is separate and
        \item \label{item:singular-scheme} the slowness surface of every complex stiffness tensor is singular as a complex scheme.
    \end{enumerate}
\end{theorem}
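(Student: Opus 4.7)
For part~\fullref{item:separate-nbhood}, the plan is a continuity-and-compactness argument. A real isotropic stiffness tensor $c_0$ with Lam\'e parameters $\lambda,\mu$ gives the Christoffel matrix
\begin{equation*}
    \Gamma^{c_0}(\xi)=\mu\abs{\xi}^2 I+(\lambda+\mu)\xi\xi^T,
\end{equation*}
whose eigenvalues are $(\lambda+2\mu)\abs{\xi}^2$ (simple, with eigenvector along $\xi$; the qP branch) and $\mu\abs{\xi}^2$ (of multiplicity $n-1$; the qS branch). Under the standard positivity assumptions these are uniformly separated on $\sphere^{n-1}$ by the strictly positive gap $(\lambda+\mu)\abs{\xi}^2$. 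Since eigenvalues of a symmetric matrix are continuous in its entries and those entries depend continuously on $(c,\xi)$, the spectral gap survives under small perturbations of $c$, uniformly in $\xi\in\sphere^{n-1}$ by compactness. Homogeneity of $\Gamma^c$ in $\xi$ then extends the separation to all $\xi\ne 0$, producing the desired neighborhood $U(c_0)$.

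For part~\fullref{item:singular-scheme}, I would first characterise singularity of the slowness surface spectrally and then invoke a topological obstruction. Set $f(\xi)=\det(\Gamma^c(\xi)-I)$, so that the slowness surface is $V(f)\subset\C^n$. A short eigenvalue-perturbation computation shows that $V(f)$ fails to be smooth at $\xi_0$ precisely when $1$ is an eigenvalue of $\Gamma^c(\xi_0)$ of multiplicity at least two; by the quadratic homogeneity of $\Gamma^c$ in $\xi$, this reduces, after rescaling, to the existence of some $\xi\ne 0$ where $\Gamma^c(\xi)$ has a nonzero repeated eigenvalue. Separately, $V(f)$ is non-reduced whenever the characteristic polynomial of $\Gamma^c$ has a repeated factor in $t$ over $\C(\xi)$, which is exactly what happens in the isotropic case for $n\ge 3$.

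If the slowness surface were smooth and reduced, $\Gamma^c(\xi)$ would have $n$ pairwise distinct eigenvalues on a Zariski-open subset of $\C P^{n-1}$, and the absence of monodromy on the simply connected $\C P^{n-1}$ would globalise this to $n$ holomorphic eigenvector line bundles $L_1,\dots,L_n$ whose direct sum is the trivial rank-$n$ bundle, together with $n$ eigenvalue functions appearing as non-vanishing sections of $\mathcal{O}(2)$ outside a codimension-one locus. The Chern class identity $\prod_i(1+a_i h)=1$ in $H^\ast(\C P^{n-1},\mathbb{Z})$ is a first necessary condition; the decisive constraint, however, comes from Hopf-invariant-one / Hurwitz--Radon theory. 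Matching the eigenvector splitting with the quadratic structure of $\Gamma^c$ is equivalent, at the level of topology, to equipping $\R^n$ with a normed division algebra structure, which by Adams' theorem exists only when $n\in\{1,2,4,8\}$.

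The main obstacle is this last, topological, step: cleanly converting smoothness of the slowness surface into a parallelisability-type statement for $\sphere^{n-1}$ to which Adams' theorem applies. The spectral reduction and the Chern-class bookkeeping are essentially algebraic and routine; the difficult work is configuring the bundle data on projective space so that it inherits the classical obstruction, and then dispatching the trivial dimension $n=1$ (where the slowness surface collapses to two points and singularity as a scheme must be argued by inspection, so that $n=1$ can be excised from the exceptional set $\{2,4,8\}$).
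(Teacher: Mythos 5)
Your part~\fullref{item:separate-nbhood} is correct and is essentially the paper's argument (Lemma~\ref{Almostradial}): simplicity of the top eigenvalue of the isotropic Christoffel matrix plus continuity of eigenvalues, compactness of the unit sphere, and homogeneity in $\xi$. For part~\fullref{item:singular-scheme}, however, your sketch stalls exactly at the step you yourself flag as ``the main obstacle,'' and that step is the heart of the theorem. The paper does \emph{not} run the topological argument over $\C P^{n-1}$ with holomorphic eigenbundles; it runs it over the reals. If a real tensor $c$ near an isotropic $c_0$ had no real degeneracy point, then the eigenspaces of the $n-1$ lower eigenvalues of $\Gamma_c(p)$, projected orthogonally to $p$, would decompose $T_p\sphere^{n-1}$ into $n-1$ continuously varying tangent line bundles, i.e.\ $\sphere^{n-1}$ would be projectively parallelizable (Definition~\ref{def:proj-par}); a finite covering trick (Proposition~\ref{prop:proj-par-cover}) upgrades this to genuine parallelizability, which is impossible for $n\notin\{2,4,8\}$ (Proposition~\ref{prop:proj-par-sphere}). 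Your complex route faces obstacles that this real argument is designed to avoid: simple connectivity of $\C P^{n-1}$ does not suppress monodromy of eigenvalues and eigenvectors, since the relevant monodromy lives in the fundamental group of the complement of the discriminant divisor; complex symmetric Christoffel matrices need not be diagonalizable, so an ``eigenvector splitting'' may not exist along the collision locus; and over $\C$ the discriminant of the characteristic polynomial is a nonconstant homogeneous polynomial in $p$, so it always vanishes somewhere on $\C^n\setminus\{0\}$ (typically on null directions where the colliding eigenvalue is $0$), which means ``$n$ distinct eigenvalues on a Zariski-open set extending over all of $\C P^{n-1}$'' is never the actual dichotomy, and an eigenvalue collision does not by itself yield a scheme-singular point of the affine slowness surface. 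The asserted equivalence with normed division algebra structures is precisely the bridge you have not built.

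A second missing ingredient is the globalization from an open set of real tensors to \emph{every} complex tensor, which no version of your bundle argument supplies. The paper's mechanism is algebraic: the singular locus $\Sigma_\C(n)$ is Zariski-closed by elimination theory (Lemma~\ref{zclosed}); the real topological argument shows it contains a Euclidean open neighborhood of each isotropic tensor inside the real slice (Proposition~\ref{interiorpoint}); and a Zariski-closed subset of $\C^N$ whose real slice has a Euclidean interior point must equal all of $\C^N$ (Lemma~\ref{realintpointzariskiclosed}). Without some propagation step of this kind, proving singularity tensor-by-tensor over $\C$ would require the eigenbundle construction to make sense for arbitrary complex symmetric $\Gamma_c$, which it does not. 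Finally, your remark about $n=1$ points in the wrong direction: there the slowness set consists of simple points and is smooth as a scheme, so singularity cannot be ``argued by inspection''; the relevant exceptional behavior is entirely captured by the parallelizable spheres $\sphere^{n-1}$, $n-1\in\{1,3,7\}$.
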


\begin{remark}
    We note that the part~\fullref{item:singular-scheme} of Theorem~\ref{highdsing} does not say anything about the singularity of real slowness surfaces, nor about whether the qP-branch is separate or not.
\end{remark}

\subsubsection{Applications to Inverse Problems}

As a direct corollary of Theorems~\ref{hregmain} and~\ref{2dseparate} and the result in~\cite{IM2023} we obtain two results for inverse problems.
The first one concerns X-ray tomography, which arises as linearization of travel times between boundary points; see e.g.~\cite{ilmavirta2019integral} for ray tomography and linearizations in the Riemannian realm.

\begin{corollary}[{Proven in Section~\ref{sec:xrt}}]
\label{geodesicxray}
    Let $M=\Bar{B}(0;1)\setminus\Bar{B}(0;R)\subset\R^n$ for any $R\in (0,1)$ and $n\ge 2$ and suppose $c$ is a spherically symmetric stiffness tensor field such that $F^c$ satisfies the Herglotz condition. If $c\in C^3(M)$ and the qP-branch of the slowness surface of $c$ is globally separate, then the geodesic X-ray transform of $(M,F^c_{qP})$ is injective on smooth scalar fields.
\end{corollary}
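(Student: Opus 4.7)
The plan is to reduce the statement to the injectivity theorem for the geodesic X-ray transform on spherically symmetric Finsler manifolds satisfying the Herglotz condition proved in~\cite{IM2023}. The work is essentially bookkeeping: one has to check that the Finsler manifold $(M, F^c_{qP})$ produced by the stiffness tensor field $c$ lies in the framework of that theorem, with the matching regularity and symmetry.

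First I would apply Theorem~\ref{hregmain}. Since $c\in C^{3}(M)$ and the qP-branch of the slowness surface of $c$ is globally separate by hypothesis, the theorem yields that $F^c_{qP}$ is of class $C^{3}$ along the manifold and smooth along the fibers. This produces the minimal horizontal regularity needed to define the geodesic flow, its exit time function, and the Jacobi equation used in~\cite{IM2023}. In particular, the Finsler norm and its fiber derivatives of the orders required by the Pestov/angle-monotonicity arguments of that paper are available.

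Second I would verify that the spherical symmetry of $c$ propagates to $F^c_{qP}$. The Finsler function is built from the eigenvalues of the Christoffel matrix $\Gamma(x,\xi)$, which is assembled from $c(x)$ and the covector $\xi$ in a rotation-equivariant way. Under an orthogonal transformation $R\in O(n)$, the spherical symmetry of $c$ forces $\Gamma(Rx,R\xi)$ to be similar to $\Gamma(x,\xi)$, so its spectrum, and in particular the qP-branch and $F^c_{qP}$, satisfies $F^c_{qP}(Rx,R\xi)=F^c_{qP}(x,\xi)$. Combined with the assumed Herglotz condition on $F^c$, this places $(M, F^c_{qP})$ exactly in the hypothesis of the injectivity result of~\cite{IM2023}, which then gives injectivity of the geodesic X-ray transform on smooth scalar fields.

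The only real point of care I anticipate is matching regularity thresholds: one must check that the $C^{3}$-along-base, smooth-on-fibers regularity produced by Theorem~\ref{hregmain} is enough to invoke the argument of~\cite{IM2023}, which needs sufficient smoothness of the Finsler function to define the geodesic curvature and the Jacobi fields appearing in the estimates. I expect this step to be the principal, though essentially routine, obstacle; no new analytical input beyond Theorem~\ref{hregmain} and~\cite{IM2023} should be required, which is why the result is stated as a direct corollary.
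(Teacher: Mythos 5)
Your route is the same as the paper's: apply Theorem~\ref{hregmain} to get $F^c_{qP}\in C^3_\infty(TM\mO)$ and then reduce to the Herglotz-type injectivity result of~\cite{IM2023}. The one caveat is that the step you defer as ``essentially routine'' is in fact the entire content of the paper's proof: the argument of~\cite{IM2023} is not quoted as a black box, but its two key lemmas (Lemmas 3.1 and 3.2 there) are re-verified at the lower regularity in Appendix~\ref{xraytechnicalappendix}. Concretely, one needs (i) unique solvability of the geodesic equation and $C^{k-1}$-dependence on initial data, which already follows from the spray coefficients being in $C^{k-1}_\infty$; (ii) the weak reversibility condition $\dddot{r}(r_0,0)=0$, $\ddot{\theta}(r_0,0)=0$, which requires three time-derivatives of geodesics; and (iii) the Taylor expansions $\ddot r(r_0,t)=a(r_0)+\mathcal{O}(t^2)$, $\dot r=a(r_0)t+\mathcal{O}(t^3)$, $r-r_0=\tfrac12 a(r_0)t^2+\mathcal{O}(t^4)$, with error bounds uniform in $r_0$, which require $\partial_t^4 r$ to exist continuously. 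It is point (iii) that forces $c\in C^3(M)$ rather than $C^2(M)$, so without carrying out this check your proposal does not explain the stated regularity threshold; once it is done (as in the appendix), your argument coincides with the paper's. Your remark on rotation-equivariance of $\Gamma_c$ implying spherical symmetry of $F^c_{qP}$ is fine but is simply folded into the hypotheses in the paper.
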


The second one concerns travel time data, which corresponds physically the collection of boundary arrival times from interior source points measured on the planet's surface.
See~\cite{dHILS2023} for more details and~\cite{ilmavirta2025lipschitz} for analogous results in the smooth category.

\begin{corollary}[{Proven in Section~\ref{sec:travel-time}}]
\label{traveltime}
    Let $M_1$ and $M_2$ be two smooth, connected and compact manifolds with boundary. Let $c_1$ and $c_2$ be stiffness tensor fields on $M_1$ and $M_2$ respectively such that $(M_i,F^{c_i}_{qP})$ are simple for $i\in\{1,2\}$. Then if the fields $c_i$ satisfy the conditions of Theorem~\ref{hregmain} for some $k\ge 2$ and the travel time data of $(M_i,F^{c_i}_{qP})$ coincide, there is a $C^{k+1}$-Finslerian isometry $\phi:M_1\to M_2$ fixing the boundary.
\end{corollary}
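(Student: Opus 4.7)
The plan is to recognize Corollary~\ref{traveltime} as a direct consequence of the Finsler boundary rigidity theorem proved in \cite{dHILS2023}, once the regularity theorem above is used to place each $(M_i,F^{c_i}_{qP})$ into the class handled there. First I would apply Theorem~\ref{hregmain} to each pair $(M_i,c_i)$: the assumptions that $c_i\in C^k(M_i)$ and that the qP-branch of the slowness surface is globally separate imply that $F^{c_i}_{qP}$ is of class $C^k$ along the base and smooth along each fiber. Combined with the hypothesis that the Finsler manifolds $(M_i,F^{c_i}_{qP})$ are simple, this is exactly the setting of the rigidity result of \cite{dHILS2023}.

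Next I would verify that the qP-Finsler functions satisfy the remaining structural assumptions of that theorem, namely positive homogeneity, strong convexity and reversibility. Homogeneity and strong convexity follow from the definition of $F^{c_i}_{qP}$ together with the separateness assumption, while reversibility is automatic because the slowness surface is the zero set of an even polynomial in the momentum variables. Once these are in place, the theorem of \cite{dHILS2023} produces, from the coinciding travel time data, a boundary-preserving diffeomorphism $\phi\colon M_1\to M_2$ that pulls the Finsler structure of $M_2$ back to that of $M_1$; in that reference this is done by first matching the boundary jets of the metrics from the boundary distance function and then extending the near-boundary identification into the interior along geodesics.

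The most delicate point, and the one I would track carefully, is the derivative gain from $C^k$ for $F^{c_i}_{qP}$ to $C^{k+1}$ for the isometry $\phi$. This gain is standard for isometry equations of the form $F^{c_2}_{qP}(\phi(x),d\phi_x v)=F^{c_1}_{qP}(x,v)$, since such an overdetermined first-order system for $\phi$ with $C^k$ coefficients can be bootstrapped by one derivative via the geodesic flow expressed in boundary normal coordinates, and the fibrewise smoothness produced by Theorem~\ref{hregmain} is precisely what makes the inversion of the exponential map and the analysis of Jacobi fields underlying this bootstrap go through in finite horizontal regularity. The analogous smooth-category argument in \cite{ilmavirta2025lipschitz} serves as the template, the only real adjustment being a careful accounting of the loss of horizontal derivatives at each step.
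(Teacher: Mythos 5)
There is a genuine gap, and it sits exactly where you flag ``the most delicate point.'' The paper does not prove this corollary by transplanting the smooth-category rigidity theorem of~\cite{dHILS2023} and bootstrapping the isometry equation; it proves a new ingredient, Proposition~\ref{Low regularity Myers-Steenrod} (a low-regularity Myers--Steenrod theorem for compact reversible Finsler manifolds \emph{with boundary}), and the corollary then follows because coinciding travel time data on simple manifolds yields a distance-preserving bijection $\phi\colon M_1\to M_2$ fixing the boundary, to which that proposition applies. Your proposal skips this ingredient. First, you cannot simply cite~\cite{dHILS2023} in the present setting: its construction is carried out for smooth Finsler metrics, whereas here $F^{c_i}_{qP}$ is only $C^k$ along the base, and checking that such a construction survives at finite horizontal regularity is not a cosmetic ``accounting of lost derivatives'' --- it is the substance of what must be proven. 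Second, and more fundamentally, your proposed derivative gain is not a valid mechanism. A priori the map produced from travel time/boundary distance data is only a distance-preserving bijection; you have no differentiability of $\phi$ at all at the point where you begin to ``bootstrap'' the relation $F^{c_2}_{qP}(\phi(x),d\phi_x v)=F^{c_1}_{qP}(x,v)$, and even granted differentiability, differentiating a first-order identity with $C^k$ coefficients does not by itself produce $C^{k+1}$ regularity for $\phi$. The gain from $C^k$ metrics to a $C^{k+1}$ isometry is precisely the Myers--Steenrod phenomenon, which in the paper is imported from~\cite{matveev2017myers} for the interior and for $\partial M_i$ (as closed manifolds), and then extended across the boundary by showing $\phi$ takes the explicit form $(s,z)\mapsto(s,\phi(z))$ in boundary normal coordinates, whose existence and $C^{k-1}$ regularity is the content of Lemma~\ref{boundaryexpregularity} (this is also where the hypothesis $k\ge 2$ enters, to have unique geodesics and a well-defined $\exp^\perp$). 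None of this is supplied by your sketch.

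Two smaller remarks. Your observation that reversibility is automatic for the qP metric (the Christoffel matrix is even in $p$) is correct and is in fact needed, since Proposition~\ref{Low regularity Myers-Steenrod} assumes reversibility so that the boundary distance/travel time data control a genuine metric. And your description of~\cite{dHILS2023} as matching boundary jets and extending along geodesics misattributes the role of that reference here: in this paper it supplies the construction of $F^c_{qP}$ and the notion of travel time data, while the rigidity-with-regularity-gain step is the paper's own Proposition~\ref{Low regularity Myers-Steenrod}. If you want to complete your argument, you should either prove a statement of Myers--Steenrod type for $C^k$ Finsler manifolds with boundary (as the paper does) or give an honest finite-regularity reworking of the construction you want to cite; as written, the key step is asserted rather than proven.
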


Both corollaries only use qP-polarized waves, which is the fastest-propagating one.
The local regularity results are also valid for qS, but the qS branch of the slowness surface may fail to be convex and thus fail to correspond to any Finsler geometry.
Also, it is common that the qP branch is globally separate from other ones but the other branches intersect each other.
Therefore qP is usually the only polarization that gives rise to nice global geometry.

\subsection{Algebraic Notions of Singularities and Implications for Analysis}

There are several ways to ask whether a slowness surface is smooth or singular.
We give the definitions and their comparisons below.
It is worth noting that different degrees of smoothness as a manifold need not be compared because as an algebraic variety the slowness surface is as a manifold either smooth or fails to be $C^1$.

An important distinction for us is that between varieties and schemes.
Schemes are more fine-grained algebraic structures and capture concepts like multiplicity better than varieties.
Comparison between algebra and geometry may be helpful:
varieties (in our context) are simply sets in an ambient Euclidean space and are in one-to-one correspondence with radical ideals of the polynomial ring over the ambient space, whereas schemes have more structure than being a mere subset and are in one-to-one correspondence with all ideals of the same ring.
Not all ideals are radical, so schemes are a generalization of varieties.
See Section~\ref{sec:singular} for details.

\begin{definition}\label{def:types-of-singularities}
Let $\K\in\{\R,\C\}$.
The vanishing set of a polynomial $f\colon\K^n\to\K$ is said to be
\begin{enumerate}
\item
\emph{smooth as a scheme} if there are no points where $f=0$ and $\nabla f=0$,
and
\item
\emph{smooth as a variety} if the vanishing set of the radical $\rad(f)$ is smooth as a scheme.
\end{enumerate}
\end{definition}

We will define the related concepts of the radical of a polynomial and the radical of an ideal in Section~\ref{sec:singular}.
At this time it suffices to know that $\rad(f)$ is the minimal polynomial for describing the vanishing set of $f$; for example $\rad(x^2y) = xy$.

The slowness surface is, of course, defined by the single polynomial $P_c$, the slowness polynomial.
We will explain in Section~\ref{sec:singular} why our definitions of smoothness for a variety or a scheme agree with the typical ones in our setting.


\emph{Degeneracy points} are vectors $p\in\K^n$ where the Christoffel matrix $\Gamma_c(p)$ has a multiple eigenvalue.
These are related to singular points of the slowness surface where smoothness (as defined above) fails.

\begin{proposition}[{Comparison of concepts of singularity for real slowness surfaces; proven in Section~\ref{sec:interpretation-sing-pt}}]
\label{realsingularpoints}
Let $c$ be a real positive definite stiffness tensor and let $\Sigma=V(P_c)$ associated real slowness surface. Then the following hold.
\begin{enumerate}
\item
\label{item:real-scheme-to-variety}
If $\Sigma$ is smooth as a scheme, then it is smooth as a variety.
\item
\label{item:real-square-free}
Suppose $P_c$ is squarefree. Then $\Sigma$ is smooth as a scheme if and only if it is smooth as a variety.
\item
\label{item:real-manifold-variety}
$\Sigma$ is smooth as a real manifold if and only if it is smooth as a variety.
\item
\label{item:real-scheme-degen-points}
Singular points of $\Sigma$ as a scheme are exactly the same as the degeneracy points of the Christoffel matrix.
\end{enumerate}
\end{proposition}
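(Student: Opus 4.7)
I would attack the four claims in order. For parts \fullref{item:real-scheme-to-variety} and \fullref{item:real-square-free} the argument is purely algebraic. Factor the slowness polynomial as $P_c = \prod_i p_i^{e_i}$ into powers of distinct irreducible factors, so $\rad(P_c) = \prod_i p_i$. By the product rule, at any point $x \in V(p_i)$ not on the other components at which $e_i \ge 2$, and at any intersection point $x \in V(p_i)\cap V(p_j)$ with $i\neq j$, one finds $P_c(x) = 0$ and $\nabla P_c(x) = 0$, contradicting scheme-smoothness. Hence scheme-smoothness of $\Sigma$ forces $P_c$ to be squarefree with pairwise disjoint real zero loci of its irreducible factors, i.e.\ $P_c = \rad(P_c)$ up to a nonzero scalar. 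This yields both \fullref{item:real-scheme-to-variety} and the nontrivial direction of \fullref{item:real-square-free}; the other direction of \fullref{item:real-square-free} is immediate.

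For \fullref{item:real-manifold-variety} the implication ``variety-smooth implies manifold-smooth'' is the real implicit function theorem applied to the single equation $\rad(P_c) = 0$. For the converse, I would treat separately the cases in which $x \in \Sigma$ lies on two distinct real-irreducible components of $\rad(P_c)$ (where transverse crossing destroys local manifold structure) and the case in which $x$ lies on a single component $V(p_i)$ with $\nabla p_i(x) = 0$. The latter is delicate because a real algebraic set can contain isolated smooth points at which the defining polynomial is algebraically singular. I would exclude this possibility by exploiting the structural origin of the factors $p_i$ as eigenvalue branches of the Christoffel matrix: positive definiteness of $c$ forces each such branch to contribute a genuine $(n-1)$-dimensional real hypersurface, ruling out the isolated-point pathology.

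For \fullref{item:real-scheme-degen-points} the main tool is the adjugate formula $\partial_{p_k} P_c(p) = \mathrm{tr}\bigl(\mathrm{adj}(\Gamma_c(p)-I)\,\partial_{p_k}\Gamma_c(p)\bigr)$. On $\Sigma$ the symmetric matrix $A := \Gamma_c(p) - I$ has $0$ as an eigenvalue; if its multiplicity is at least two, symmetry gives $\mathrm{rank}\,A \leq n-2$, hence $\mathrm{adj}(A) = 0$ and $\nabla P_c(p) = 0$, so $p$ is a scheme singularity. Conversely, if $1$ is a simple eigenvalue of $\Gamma_c(p)$, the corresponding eigenvalue function $\lambda_1$ is real-analytic near $p$ and homogeneous of degree two, so Euler's identity yields $p\cdot\nabla\lambda_1(p) = 2\lambda_1(p) = 2 \neq 0$. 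Factoring $P_c = (\lambda_1 - 1) G$ locally, where $G$ is the product of the remaining $(\lambda_j - 1)$'s, a smooth function of the entries of $\Gamma_c$ that does not vanish at $p$, we obtain $\nabla P_c(p) = G(p)\nabla\lambda_1(p) \neq 0$, so $p$ is not a scheme singularity.

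The main obstacle is the manifold-to-variety direction of \fullref{item:real-manifold-variety}: real algebraic geometry permits smooth manifold points at which a defining polynomial is singular as a scheme, as illustrated by the isolated real point of $V(x^2+y^2)\subset\R^2$, and excluding such pathologies for the slowness surface requires using the Christoffel-matrix structure and the positive definiteness of $c$ in an essential way.
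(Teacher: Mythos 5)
Your treatments of parts \fullref{item:real-scheme-to-variety}, \fullref{item:real-square-free} and \fullref{item:real-scheme-degen-points} are essentially correct, though by different routes than the paper. For \fullref{item:real-scheme-to-variety}--\fullref{item:real-square-free} you argue directly from the factorization and the product rule, whereas the paper deduces \fullref{item:real-scheme-to-variety} from \fullref{item:real-manifold-variety} (nonvanishing gradient gives a manifold, then manifold-smoothness gives variety-smoothness) and settles \fullref{item:real-square-free} in one line, since under the squarefree hypothesis $\rad(P_c)=P_c$ and the two definitions literally coincide. One overreach on your side: scheme-smoothness of the \emph{real} vanishing set does not force $P_c$ to be squarefree (a repeated irreducible factor with empty real zero locus is not excluded); fortunately your computation still gives \fullref{item:real-scheme-to-variety}, because at any real zero $\nabla P_c=h\,\nabla\rad(P_c)$ with $h=\prod_i p_i^{e_i-1}$, so $\nabla P_c\neq0$ forces $\nabla\rad(P_c)\neq0$. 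For \fullref{item:real-scheme-degen-points} your argument via Jacobi's formula, the vanishing of the adjugate when the kernel is at least two-dimensional, and the Euler identity for the simple branch is complete, and in fact fills in precisely the step the paper leaves implicit (the paper restricts $P_c$ to lines through $p$ and then asserts the equivalence of a multiple root with a degenerate eigenvalue); like the paper, you read ``degeneracy'' as degeneracy of the eigenvalue $1$ at points of $\Sigma$, which is the intended meaning.

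The genuine gap is the manifold-to-variety direction of \fullref{item:real-manifold-variety}, which you rightly single out as delicate but do not prove. The mechanism you propose --- positive definiteness makes every eigenvalue branch a genuine $(n-1)$-dimensional hypersurface --- only excludes the dimension-drop pathology (isolated or lower-dimensional real points, as in $V(x^2+y^2)$); it does not address the actual difficulty, namely a point $p$ at which the real zero set is a smooth hypersurface of full dimension while $\nabla\rad(P_c)(p)=0$. Full-dimensionality alone cannot rule this out: the plane curve $V(y^3-x^4)$ is a full-dimensional graph with vanishing gradient at the origin (it fails only to be $C^\infty$ there), and two distinct branches meeting tangentially at $p$ have full-dimensional union near $p$; so your case analysis still needs an argument that such configurations destroy smooth manifold structure (or cannot occur for slowness surfaces), and none is given --- the exclusion must come from the manifold-smoothness hypothesis itself, which your branch-dimension argument never uses. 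The paper closes this step differently: it uses that slowness surfaces have pure dimension $n-1$, so at a manifold point the tangent plane has dimension $n-1$, identifies this tangent plane with the Zariski tangent space $T_p\Sigma$, and concludes $\dim T_p\Sigma=\dim_p\Sigma$, i.e.\ smoothness as a variety. To complete your write-up you would need either this tangent-space comparison or a genuine argument from the eigenvalue structure showing that $\nabla\rad(P_c)$ cannot vanish where $\Sigma$ is a smooth hypersurface.
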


For point~\fullref{item:real-square-free} there is a partial converse:
There is a stiffness tensor $c$ (namely any isotropic one) in dimensions $n\geq3$ so that $P_c$ contains a square and the associated slowness surface is smooth as a variety but not as a scheme.

\begin{proposition}[{Comparison of concepts of singularity for complex slowness surfaces; proven in Section~\ref{sec:interpretation-sing-pt}}]\label{complexsingularpoints}
Let $c$ be a real positive definite stiffness tensor and let $\Sigma$ (the slowness surface) be the complex vanishing set of the slowness polynomial $P_c$.
\begin{enumerate}
\item
\label{item:complex-scheme-to-variety}
If $\Sigma$ is smooth as a scheme, then it is smooth as a variety.
\item
\label{item:complex-square-free}
If $P_c$ is squarefree, then smoothness of $\Sigma$ as a scheme and as a variety are equivalent.
If $P_c$ is not squarefree, then $\Sigma$ is not smooth as a scheme.
\item
\label{item:complex-manifold-variety}
$\Sigma$ is smooth as a complex manifold if and only if it is smooth as a variety.
\end{enumerate}
\end{proposition}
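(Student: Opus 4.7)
I would prove the three parts in the order (2), (1), (3), anchoring the argument on the unique factorization
\[
P_c = u\cdot p_1^{e_1}\cdots p_k^{e_k}
\]
into pairwise distinct irreducibles in $\C[x_1,\dots,x_n]$ with $u\in\C\setminus\{0\}$, so that $\rad(P_c)$ is a nonzero scalar multiple of $p_1\cdots p_k$.

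For the non-squarefree direction of (2) I would assume $e_1\geq 2$ without loss of generality. Since each $p_j$ with $j\neq 1$ is coprime to the irreducible $p_1$, the intersection $V(p_1)\cap V(p_j)$ is a proper closed subvariety of the irreducible $(n-1)$-dimensional $V(p_1)$ and has complex dimension at most $n-2$; hence $V(p_1)\setminus\bigcup_{j\neq 1}V(p_j)$ is nonempty. At any such point $x$ the product rule
\[
\nabla P_c = u\sum_{i} e_i\, p_i^{e_i-1}(\nabla p_i)\prod_{j\neq i}p_j^{e_j}
\]
annihilates every summand: the $i=1$ term carries $p_1^{e_1-1}(x)=0$ (using $e_1-1\geq 1$) and each $i\neq 1$ term contains $p_1^{e_1}(x)=0$. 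Combined with $P_c(x)=0$ this exhibits a scheme singularity. Conversely, if $P_c$ is squarefree then $P_c$ and $\rad(P_c)$ agree up to a nonzero scalar, so scheme smoothness of $V(P_c)$ coincides verbatim with variety smoothness, completing (2). Part (1) then follows by contraposition: scheme smoothness rules out any $e_i\geq 2$, hence $P_c$ is squarefree, and the previous equivalence applies.

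For part (3) the forward direction is the holomorphic implicit function theorem applied to $\rad(P_c)$: at every point on $\Sigma$ where its gradient is nonzero, $\Sigma$ is locally the zero set of a holomorphic function with nonvanishing differential and is therefore a complex submanifold of $\C^n$ of codimension one. For the converse, suppose $\Sigma$ is a complex manifold at $x\in\Sigma$. Since $\rad(P_c)$ is nonconstant, $\Sigma$ has pure complex codimension one near $x$ by Krull's Hauptidealsatz, and because a smooth manifold germ is analytically irreducible we may choose local holomorphic coordinates $w_1,\dots,w_n$ centered at $x$ so that $\Sigma=\{w_1=0\}$ locally. The holomorphic Nullstellensatz then gives $\rad(P_c)=h\cdot w_1^\ell$ for a holomorphic unit $h$ and some $\ell\geq 1$, and a direct computation shows that the differential $d\rad(P_c)|_x$ vanishes when $\ell\geq 2$ and equals $h(x)\,dw_1|_x\neq 0$ when $\ell=1$.

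The main obstacle is closing the loop by excluding $\ell\geq 2$. Algebraic squarefreeness of $\rad(P_c)$ makes $(\rad(P_c))$ a radical ideal of the polynomial ring, and I would invoke excellence/flatness of the analytic local ring over its algebraic counterpart to conclude that $\mathcal{O}_{\C^n,x}^{\mathrm{an}}/(\rad(P_c))$ is reduced; comparing this with the local description $\mathcal{O}_{\C^n,x}^{\mathrm{an}}/(w_1^\ell)$, which is reduced only for $\ell=1$, finishes (3).
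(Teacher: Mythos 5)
Your proof is correct, but at the substantive points it takes a different route from the paper's. For parts (1) and (2) the content is essentially the same---squarefreeness makes $P_c$ and $\rad(P_c)$ agree up to a constant so the two notions of smoothness coincide verbatim, and a repeated factor forces $\nabla P_c$ to vanish at a complex zero of that factor (your restriction to points off the other components is harmless but unnecessary: any zero of the repeated factor already annihilates every term of the product rule)---though you obtain (1) as a corollary of (2) by contraposition, whereas the paper derives (1) from smoothness as a manifold together with part (3). The genuine divergence is the manifold-to-variety direction of (3): the paper transplants its real-case argument, a dimension count identifying the Zariski tangent space of $V(\rad(P_c))$ with the geometric tangent plane of the submanifold, while you straighten $\Sigma$ to $\{w_1=0\}$ in local holomorphic coordinates, use the R\"uckert Nullstellensatz and unique factorization in the analytic local ring to write $\rad(P_c)=h\,w_1^{\ell}$ with $h$ a unit and $\ell\ge 1$, and then exclude $\ell\ge 2$ because the squarefree polynomial $\rad(P_c)$ remains reduced in $\mathcal{O}^{\mathrm{an}}_{\C^n,x}$ (excellence/comparison with the completion). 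Your version invokes heavier machinery, but it makes fully explicit the point the paper's tangent-space identification leaves implicit, namely why a manifold point cannot hide a vanishing gradient of the radical; the paper's argument is shorter and stays at the level of linear algebra of Zariski tangent spaces. If you want to avoid excellence, note that $\ell\ge 2$ would force all partial derivatives of $\rad(P_c)$ to vanish on an open subset of an irreducible component of $\Sigma$, hence on that entire component, so the corresponding irreducible factor would divide every partial derivative of $\rad(P_c)$, which contradicts squarefreeness in characteristic zero.
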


The above results concern affine varieties and schemes.
The results would be different for complexified and projectivized versions of slowness surfaces.
For example, in 2D every isotropic slowness surface is singular because two circles of different radii intersect at the complex infinity.

\begin{remark}
Points on the cotangent bundle where the elastic wave operator is of \emph{real principal type} are points where the operator behaves microlocally well.
The crucial feature of real principal type is that the dimension of the kernel of the principal symbol of the operator is locally constant. A sufficient condition for this is that the slowness surface is smooth as a scheme. The implication will not in general work in the other direction, with the isotropic stiffness tensors serving as a counterexample.
See~\cite{Dencker1982} for a detailed definition.
While the other concepts are defined fiber by fiber, being real principal type is a local property on the cotangent bundle.
\end{remark}

An important example is the isotropic stiffness tensor.
The slowness polynomial is
\begin{equation}
\label{eq:isotropic-slowness-polynomial}
P(p)
=
(c_P^2\abs{p}^2-1)
(c_S^2\abs{p}^2-1)^{n-1}
,
\end{equation}
where $c_P$ and $c_S$ are the pressure and shear wave speeds.
The slowness surface is the union of two spheres (with radii $c_P^{-1}$ and $c_S^{-1}$), and this is smooth as a variety and from the analytic point of view.
But as the shear sphere is counted with multiplicity $n-1$, the isotropic slowness surface is not smooth as a scheme in dimensions $n\geq3$.
The slowness polynomial contains a square (a factor with an exponent ${}\geq2$).

The radical of the isotropic slowness polynomial in~\eqref{eq:isotropic-slowness-polynomial} is
\begin{equation}
\rad(P)(p)
=
(c_P^2\abs{p}^2-1)
(c_S^2\abs{p}^2-1)
.
\end{equation}
In fact, we conjecture that at least in dimension 3 a slowness polynomial corresponding to a positive definite stiffness tensor contains a square if and only if it is isotropic.
We know no other examples of this behavior.

\subsection{Motivation, Context and Related Results}

The ray paths of seismic waves can be modeled as integral curves of the Hamiltonian flows of the eigenvalues of the Christoffel matrix. Under the assumption that the qP-branch of the slowness surface is globally separate, the Hamiltonian flow of the qP-eigenvalue is the co-geodesic flow of the elastic wave geometry. For more details and background see for example~\cite{antonelli2003seismic,de2023determination,yajima2009finsler}. Another approach to modeling the wavefronts of elastic waves comes from microlocal analysis. For example for strictly hyperbolic systems the singularities of solutions propagate along the Hamiltonian flows of the eigenvalues of the principal symbol. Details can be found for example in~\cite{hintz2025introduction}. For a physical version of propagation of singularities along the ray paths see~\cite{Cerveny2001}. For example in two dimensions the isotropic elastic wave equation is a strictly hyperbolic system. 

The geometric point of view allows for the use of many powerful techniques of differential geometry in investigations of injectivity and stability for inverse problems related to the elastic wave equation. The existing theory of microlocal analysis fails already at a relatively high finite regularity, but we can still take the geometric point of view as a model for the physics at lower regularity. The applicability of the classical theory of Finsler geometry requires that the Finsler function is fiberwise smooth and at least of class $C^{1,1}$ along the manifold, so that the geodesic equation is uniquely solvable everywhere. This motivates the main results of our article, since in order to reliably apply the geometric model to a physical situation, we need to determine what we have to assume from the stiffness tensor in order for the geometric objects and results to remain valid. 

There is an extensive literature on inverse problems related to elasticity. In the isotropic case the resulting wave geometries are Riemannian, allowing one to apply the results on inverse problems on Riemannian manifolds. For example, based on Riemannian results it was shown in~\cite{rachele2000inverse} that the wave speeds are uniquely determined by the Dirichlet to Neumann map.  For more on the Riemannian results we refer to~\cite{paternain2023geometric} and references therein.

It is however the case that in applications the materials are very often anisotropic and nonsmooth. Hence there is a great need for generalizations to the Finslerian and nonsmooth settings. Regarding results on inverse problems in the nonsmooth Riemannian setting we mention~\cite{ilmavirta2021pestov,ilmavirta2024tensor}, where the authors prove injectivity results for metrics of class $C^{1,1}$ and~\cite{ilmavirta2023microlocal}, where the metric is assumed more regular, but the scalar field recovered by the geodesic X-ray transform is only required to be in $L^2$.

For results on ray transforms in the smooth Finslerian setting we mention~\cite{assylbekov2018x}, where the authors prove a variety of injectivity results for various curve families over a Finslerian surface and~\cite{dairbekov2008rigidity}, where injectivity results are obtained for ray transforms on closed Finsler manifolds of dimension greater than or equal to two. The injectivity of the Finslerian geodesic X-ray transform on was proved on simple manifolds of arbitrary dimension $\ge 2$ in~\cite{Ivanov2013}. There is also a number of works applying Finsler geometric methods to inverse problems in anisotropic elasticity, related for example to the Dix problem~\cite{de2025reconstruction} and to broken scattering relations~\cite{de2022foliated}. For previous work applying algebraic geometry to inverse problems in anisotropic elasticity we refer to~\cite{dHILVA2023}. 

There is a large body of work in inverse problems in elasticity not utilising Finslerian methods, related to for example unique continuation~\cite{de2023quantitative}, to elastic surface waves~\cite{de2019semiclassical}, to cloaking~\cite{quadrelli2021elastic}, to inverse source problems~\cite{bao2018inverse} and to scattering of elastic waves~\cite{bao2018direct}.

Elastic geometry is Finsler geometry, but very few Finsler geometries arise from elasticity.
Ivanov~\cite{Ivanov2013} gave a counterexample to boundary rigidity on Finsler manifolds, but the example does not seem to stay in the class of elastic Finsler metrics.
Therefore care is needed in modelling on the correct level of generality; Riemannian geometry is too narrow but fully general Finsler geometry is too broad.

\subsection{Organization of the Article}

The rest of the paper is organized as follows. Section~\ref{sec:preliminaries} is devoted to preliminaries, divided into those related to elasticity and to related to Finsler geometry. The required prerequisites related to algebraic geometry will be reviewed in Section~\ref{sec:vertical-regularity}. Section~\ref{sec:horizontal-regularity} is dedicated to the analytical results concerning the horizontal regularity and contains the proof of Theorem~\ref{hregmain}, while Section~\ref{sec:vertical-regularity} is dedicated to the algebraic results concerning the vertical regularity and contains the proofs of Theorems~\ref{2dseparate} and~\ref{highdsing}. The applications to inverse problems are treated in Section~\ref{sec:inverse-problems} and lastly Section~\ref{sec:singular} contains technical details on the definitions of singularities of schemes and varieties along with the proofs of Propositions~\ref{realsingularpoints} and~\ref{complexsingularpoints}.
Appendices~\ref{app:anisotropic-spaces} and~\ref{xraytechnicalappendix} contain proofs of the basic properties of our anisotropic function spaces and proofs of some of the more technical lemmas.

\subsection{Acknowledgments}
J.I. was supported by the Research Council of Finland
(Flagship of Advanced Mathematics for Sensing Imaging and Modelling grant 359208; Centre of Excellence of Inverse Modelling and Imaging grant 353092; and other grants 351665, 351656, 358047, 360434) and a Väisälä project grant by the Finnish Academy of Science and Letters.
P.K was supported by the Finnish Ministry of Education and Culture’s Pilot for Doctoral Programmes (Pilot project Mathematics of Sensing, Imaging and Modeling) and the Research Council of Finland
(Flagship of Advanced Mathematics for Sensing Imaging and Modelling grant 359208; Centre of Excellence of Inverse Modelling and Imaging grant 353092).
A.K. was supported by the Geo-Mathematical Imaging Group at Rice University.
We thank Maarten V. de Hoop for discussions.

\section{Geometric preliminaries}
\label{sec:preliminaries}

We divide the preliminaries into those dealing with Elasticity and those dealing with Finsler geometry.

\subsection{Elasticity}

An element $c=c_{ijkl}$ of $\R^{n^4}$ is a stiffness tensor, if it has the elastic symmetries $c_{ijkl}=c_{jikl}=c_{klij}$. We note that for us the stiffness tensors are not tensorial, but are interpreted as sections of the trivial bundle $M\times\R^{n^4}$, where $M\subset\R^n$ is the closure of a bounded smooth domain in $\R^n$ modeling the elastic body.
The dimension of the stiffness tensor is the dimension of the manifold on which it is defined, and the space of $n$-dimensional stiffness tensors with components taking values in $\K\in\{\R,\C\}$ is denoted by $E_{\K}(n)$ and can be identified with $\K^{N(n)}$, where $N(n)=\frac{n(n+1)}{2}$. The two dimensional subspace of isotropic $\K$-valued stiffness tensors will be denoted by $I_\K(n)$.  A stiffness tensor field $c:M\to E_{\K}(n)$ is defined to be of class $C^k(M)$, $k\in\mathbb{N}_{\ge 0}$, if the component functions are of class $C^k(M)$. 

We will denote the space of symmetric $n\times n$ matrices over $\K$ by $\text{Symm}_{\K}(n)$. The Christoffel matrix field associated to a stiffness tensor field $c:M\to E_{\K}(n)$ will be denoted by $\Gamma_c$, and is understood either as a map
\begin{equation}
    \Gamma_c: T^*M\to \text{Symm}_{\R}(n)
\end{equation}
or as a map
\begin{equation}
    \Gamma_c:T_\C^*M\to \text{Symm}_{\C}(n),
\end{equation}
depending on the situation. Here $T_\C^*M$ refers to the complexified cotangent bundle with the typical fibers $T_p^*\otimes_{\R}\C$. In local coordinates $(x,p)$ of $T^*M$ or $T^*_\C M$ the Christoffel matrix has the representation
\begin{equation}
    (\Gamma_c)_{il}(x,p)=c_{ijkl}(x)p_jp_k.
\end{equation}
The characteristic polynomial of the Christoffel matrix $\Gamma_c(p)$ is called the slowness polynomial and will be denoted by $P_c(p)$.
The leading eigenvalue of the Christoffel matrix, corresponding to the qP-waves, will be denoted by $\lambda_{qP}^c$ and the smaller eigenvalues, corresponding to the qS-waves, will be denoted by $\lambda_{qS,j}^c$ for $j\in\{1,\dots,n-1\}$. We are not ruling out the possibility that $\lambda_{qP}^c=\lambda_{qS,1}^c$. The Finsler functions associated to $\lambda_{qP}^c$ and $\lambda_{qS,j}^c$ will be denoted by $F_{qP}^c$ and $F_{qS,j}^c$ respectively. In two dimensions we will omit the index $j$.

\subsection{Finsler Geometry}

We recall the basic definitions of Finsler geometry. For more details we refer to the books~\cite{antonelli2003handbook,BCS2000,shen2001lectures}, and for a quick introduction to~\cite{dahl2006brief}. Let $M$ be a smooth manifold. A function $F:TM\to [0,\infty)$ is a Finsler function if for each $x\in M$ and $X,Y,Z\in T_xM$ we have
\begin{itemize}
    \item positive definiteness: $F(x,X)=0\iff X=0$, 
    \item positive 1-homogeneity along fibers: $F(x,\lambda X)=\lambda F(x,X)$ for all $\lambda >0$ and
    \item strong convexity: $F(x,\dummy):T_xM\to [0,\infty)$ is smooth outside the zero section and for all $X\neq 0$ the symmetric bilinear form
    \begin{equation}
        g_{x,X}(Y,Z):=\frac{1}{2}\frac{\partial^2}{\partial t\partial s}(F(x,X+sY+tZ)^2)\vert_{s=t=0}
    \end{equation}
    is positive definite. The form $g$ is referred to as the fundamental tensor. 
\end{itemize}
A Finsler manifold is a pair $(M,F)$, where $F$ is a Finsler function. A Finsler function will always be of class $C^1$ at the zero section, but due to homogeneity $F^2$ is smooth at the zero section if and only if 
\begin{equation}
    F(x,y)=\sqrt{g_{ij}(x)y^iy^j},
\end{equation}
where $g$ is a Riemannian metric.

    \begin{remark}
        Some authors replace the strong convexity condition with the subadditivity assumption
        \begin{equation}
            F(x,X+Y)\leq F(x,X)+F(x,Y)
        \end{equation}
        for all $X,Y\in T_xM$. Since strong convexity implies subadditivity and the invertibility --- and hence positive-definiteness --- of the matrix of the fundamental tensor is required for writing the geodesic equation in the standard form, we choose to take strong convexity as a part of the definition.
    \end{remark}
    
\subsubsection{Metric Structure}

Let $\infty<a<b<\infty$ and let $\gamma:[a,b]\to M$ be a smooth curve. The length $\ell(\gamma)$ of $\gamma$ is defined as
    \begin{equation}
        \ell(\gamma)=\int_a^bF(\gamma(t),\Dot{\gamma}(t))\ dt.
    \end{equation}
Due to the homogeneity of the Finsler function the length of a curve is invariant under positively oriented reparamerizations. Assuming $M$ is path connected, the Finsler function lets us define a distance on $M$ by declaring for any $x,y\in M$ the distance
    \begin{equation}
        d(x,y)=\inf\{\ell(\gamma):\gamma:[0,1]\to M\ \text{smooth and}\ \gamma(0)=x,\ \gamma(1)=y\}.
    \end{equation}
We note that unless we require 
    \begin{equation}\label{reversibility}
        F(x,-\dummy)=F(x,\dummy)
    \end{equation} 
on each $T_xM$, we do not necessarily have $d(x,y)=d(y,x)$. A Finsler function $F$ satisfying \eqref{reversibility} fiberwise is called reversible, and the distance function of a reversible Finsler function defines a metric on the manifold.

\subsubsection{Geodesics}

The Euler-Lagrange equation associated to the length functional $\ell$ can be written, assuming strong convexity, in components as
\begin{equation}\label{geodesiceq}
    \Ddot{\gamma}^i+\frac{1}{2}g^{ij}(\gamma,\Dot{\gamma})\left(2\frac{\partial g_{jk}}{\partial x^l}(\gamma,\Dot{\gamma})-\frac{\partial g_{kl}}{\partial x^j}(\gamma,\Dot{\gamma})\right)\Dot{\gamma}^k\Dot{\gamma}^l=0,
\end{equation}
where $g_{ij}$ are the components of the fundamental tensor and $[g^{ij}(x,y)]_{i,j=1}^n$ is the inverse matrix of $[g_{ij}(x,y)]_{i,j=1}^n$.
The solutions to \eqref{geodesiceq} are called geodesics. The lifts $(\gamma,\Dot{\gamma})$ of geodesics are integral curves of the so-called geodesic spray $\mathbb{G}\in \Gamma(T(TM\mO))$, given in local coordinates by
\begin{equation}
    \mathbb{G}(x,y)=y^i\frac{\partial}{\partial x^i}+2G^i(x,y)\frac{\partial}{\partial y^i},
\end{equation}
where the spray coefficients $G^i$ are given by
\begin{equation}
    G^i(x,y)=\frac{1}{4}g^{ij}(x,y)\left(2\frac{\partial g_{jk}}{\partial x^l}(x,y)-\frac{\partial g_{kl}}{\partial x^j}(x,y)\right)y^ky^l
    .
\end{equation}

\section{Horizontal Regularity}
\label{sec:horizontal-regularity}

A characteristic feature of the elastic wave geometry is that the Finsler function enjoys much greater regularity on the fibers than along the manifold. In order to capture this phenomena, we introduce a class of anisotropic function spaces. To motivate the specific definition of our anisotropic function spaces, we take an interlude to talk about Legendre transforms and how the elastic Finsler function $F^c$ is constructed.

\subsection{Legendre Transform and Construction of the Elastic Finsler Function}

Any smooth function $H:T^*M\mO\to \R$ with a fiberwise invertible Hessian, given in local coordinates $(x,p)$ of $T^*M\mO$ by
\begin{equation}
    \text{Hess}(H)^{ij}(x,p)=\frac{\partial^2H}{\partial p_i\partial p_j}(x,p),
\end{equation}
defines an invertible map $\ell_H:T^*M\mO\to TM\mO$ via the local coordinate formula
\begin{equation}
    \ell_H(x,p)=(x,\partial_{p_1}H(x,p),\dots,\partial_{p_n}H(x,p)).
\end{equation}
The Legendre transformation $\mathcal{L}_H(f)$ of any $f\in C^\infty(T^*M\mO)$ is then defined to be the smooth function $\mathcal{L}_H(f):TM\mO\to\R$ defined by
\begin{equation}
    \mathcal{L}_H(f)=(\ell_H^{-1})^*E_f,
\end{equation}
where the energy $E_f$ of the function $f$ is defined in the local induced coordinates $(x,p)$ of $T^*M\mO$ by
\begin{equation}
    E_f(x,p)=p_i\frac{\partial f}{\partial p_i}(x,p)-f(x,p).
\end{equation}

Now if the Hamiltonian is fiberwise convex and 2-homogeneous, then $\mathcal{L}_H(H)$ is also fiberwise convex and 2-homogeneous. Hence given a Finsler function $\Tilde{F}$ on $T^*M$, we can then obtain a Finsler function $F$ on $TM$ by the formula
\begin{equation}
    \frac{1}{2}F^2=\mathcal{L}_{\frac{1}{2}\Tilde{F}^2}\left(E_{\frac{1}{2}\Tilde{F}^2}\right)
\end{equation}
by defining 
\begin{equation}
    \ell_{\frac{1}{2}\Tilde{F}^2}(0)=0.
\end{equation}
Due to the fiberwise homogeneity of $\Tilde{F}$, we have by Euler's theorem (see e.g.~\cite[Theorem 1.2.1]{BCS2000}) that
\begin{equation}
    E_{\frac{1}{2}\Tilde{F}^2}=\frac{1}{2}\Tilde{F}^2
\end{equation}
and hence the equation for the Finsler function $F$ simplifies to
\begin{equation}
    \frac{1}{2}F^2=(\ell_{\frac{1}{2}\Tilde{F}^2}^{-1})^*\frac{1}{2}\Tilde{F}^2,
\end{equation}
which then gives us
\begin{equation}
    F=(\ell_{\frac{1}{2}\Tilde{F}^2}^{-1})^*\Tilde{F}.
\end{equation}

The qP-Finsler function of a stiffness tensor $c$ is obtained as follows. First we use the implicit function theorem to write the qP-eigenvalue $\lambda_{qP}^c$ of the Christoffel matrix $\Gamma_c$ as a function on the cotangent bundle. The function $(\lambda_{qP}^c)^{1/2}$ can be seen to define a Finsler function on $T^*M$. For details we refer to~\cite{dHILS2023}. This is then used to construct the elastic Finsler function $F^c_{qP}$ on $TM$ via the Legendre transformation induced by the function $\frac{1}{2}\lambda_{qP}^c$. More specifically, by the previous discussion we have the formula
\begin{equation}
    F^c_{qP}=(\ell_{\frac{1}{2}\lambda_{qP}^c}^{-1})^*(\lambda_{qP}^c)^{1/2}.
\end{equation}

Hence to determine the anisotropic regularity of the qP-geometry arising from a stiffness tensor $c$, we need to know how anisotropic regularity is inherited by implicit and inverse functions. We have designed our anisotropic regularity classes so that they are preserved exactly by implicit and inverse functions. 

\subsection{Anisotropic Regularity}

Let $V\subset\R^m$ and $U\subset\R^n$ be open. Denote by $\partial_v$ the derivatives with respect to the variables in $V$ and by $\partial_u$ the derivatives with respect to the variables in $U$. Define 
\begin{equation}
    C^k(V\times U;\R^d)
\end{equation}
to be the set of functions $f:V\times U\to \R^d$ that are $k$-times continuously differentiable with respect to the first variable and by
\begin{equation}
    C_l(V\times U;\R^d)
\end{equation}
the set of functions $f:V\times U\to\R^d$ that are $l$-times continuously differentiable with respect to the second variable. 

\begin{definition}[Anisotropic regularity on Euclidean domains]
The anisotropic regularity class $C^k_l(V\times U;\R^d)$, $1\leq k\leq l$, is defined to be the set of functions $f:V\times U\to\R^d$ for which
\begin{itemize}
    \item $f\in C^{k}(V\times U;\R^d)\cap C_l(V\times U;\R^d)$,
    \item $\partial_u^\alpha f\in C^{\min\{k,l-\vert\alpha\vert\}}(V\times U;\R^d)$ for all $\alpha$ with $\vert\alpha\vert\leq l$,
    \item $\partial_v^\alpha f\in C_{l-\vert\alpha\vert}(V\times U;\R^d)$ for all $\alpha$ with $\vert \alpha\vert\leq k$.
\end{itemize}
    
\end{definition}

The following elementary Lemmas will be of great use. The proof of Lemma~\ref{commutingderivatives} can be found in Appendix~\ref{app:anisotropic-spaces} and we omit the proof of Lemma~\ref{Anisotropiccomposition} as an elementary consequence of the chain rule.

\begin{lemma}\label{commutingderivatives}
    If $f\in C^k_l(V\times U;\R^d)$, we have that
    \begin{equation}
        \partial_v^\alpha\partial_u^\beta f=\partial_u^\beta\partial_v^\alpha f
    \end{equation}
    for all $\alpha$ and $\beta$ with $\vert\alpha\vert\leq k$ and $\vert\alpha\vert+\vert\beta\vert\leq l$.
\end{lemma}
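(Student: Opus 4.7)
The plan is to induct on $N = |\alpha|+|\beta|$ and reduce the commutation to a single application of the classical Schwarz lemma, using the induction hypothesis to bring intermediate expressions into forms where regularity is readable off the definition of $C^k_l$. The cases $N\leq 1$, $|\alpha|=0$, or $|\beta|=0$ are immediate. In the inductive step with $|\alpha|,|\beta|\geq 1$, I would fix indices $i,j$ with $\alpha_i,\beta_j\geq 1$ and introduce the intermediate function
\begin{equation}
g := \partial_v^{\alpha-e_i}\partial_u^{\beta-e_j}f.
\end{equation}

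The induction hypothesis applied to $(\alpha-e_i,\beta-e_j)$ (total $N-2$) supplies the alternate form $g = \partial_u^{\beta-e_j}\partial_v^{\alpha-e_i}f$. Each form exposes one first-order derivative of $g$: the original shows $\partial_{v_i}g = \partial_v^\alpha\partial_u^{\beta-e_j}f$, continuous because $\partial_u^{\beta-e_j}f \in C^{\min\{k,l-|\beta|+1\}}$ and $|\alpha|\leq\min\{k,l-|\beta|+1\}$, while the alternate form shows $\partial_{u_j}g = \partial_u^\beta\partial_v^{\alpha-e_i}f$, continuous because $\partial_v^{\alpha-e_i}f \in C_{l-|\alpha|+1}$ and $|\beta|\leq l-|\alpha|+1$. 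A further appeal to the induction hypothesis at level $N-1$ for $(\alpha-e_i,\beta)$ lets me rewrite $\partial_u^\beta \partial_v^{\alpha-e_i}f = \partial_v^{\alpha-e_i}\partial_u^\beta f$, from which $\partial_{v_i}\partial_{u_j}g = \partial_v^\alpha\partial_u^\beta f$, again continuous by the defining regularity of $C^k_l$. The strong form of Schwarz's theorem then yields $\partial_{v_i}\partial_{u_j}g = \partial_{u_j}\partial_{v_i}g$.

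It then only remains to identify both sides of this equality. The left side is $\partial_v^\alpha\partial_u^\beta f$ by construction. For the right, $\partial_{u_j}\partial_{v_i}g = \partial_{u_j}\partial_v^\alpha\partial_u^{\beta-e_j}f$, and a final use of the induction hypothesis at level $N-1$ for $(\alpha,\beta-e_j)$ converts this into $\partial_{u_j}\partial_u^{\beta-e_j}\partial_v^\alpha f = \partial_u^\beta\partial_v^\alpha f$, closing the induction.

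The main obstacle is bookkeeping. The class $C^k_l$ directly supplies regularity only for the two pure orderings of derivatives ($u$-first or $v$-first), so every mixed expression that appears while checking the Schwarz hypotheses must first be transformed into one of these canonical forms. The argument therefore invokes the induction hypothesis once at level $N-2$ (to produce the alternate form of $g$) and twice at level $N-1$ (to verify the Schwarz hypothesis and to interpret its conclusion); at each invocation one must check that the subtotal is strictly below $N$ and that $|\alpha'|\leq k$ and $|\alpha'|+|\beta'|\leq l$ both hold, which follows from slack in the original assumptions but requires care.
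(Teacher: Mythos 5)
Your proof is correct. The bookkeeping works: each appeal to the induction hypothesis, at $(\alpha-e_i,\beta-e_j)$, $(\alpha-e_i,\beta)$ and $(\alpha,\beta-e_j)$, stays inside the admissible range $\vert\alpha'\vert\le k$, $\vert\alpha'\vert+\vert\beta'\vert\le l$, and the quantities $\partial_{v_i}g$, $\partial_{u_j}g$, $\partial_{v_i}\partial_{u_j}g=\partial_v^\alpha\partial_u^\beta f$ and $\partial_{u_j}\partial_{v_i}g=\partial_u^\beta\partial_v^\alpha f$ are all exhibited, via the canonical-form rewritings, as continuous functions readable off the definition of $C^k_l$, so the strong Schwarz theorem applies to $g$ and closes the induction. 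The route differs from the paper's in two respects. First, the induction is organized differently: the paper first proves $\partial_v^\alpha\partial_{u_j}f=\partial_{u_j}\partial_v^\alpha f$ by induction on $\vert\alpha\vert$ with a single $u$-derivative and only then inducts on $\vert\beta\vert$, whereas you run one induction on the total order $\vert\alpha\vert+\vert\beta\vert$ and peel off one derivative of each type through the intermediate function $g$. Second, the paper does not cite Schwarz's theorem at all; it proves the order-$(1,1)$ commutation from scratch by pairing with a test function $\varphi\in C^\infty_0$, integrating by parts to move both derivatives onto $\varphi$, and invoking the fundamental lemma of the calculus of variations --- a distributional argument that uses exactly the input you feed into Schwarz, namely continuity of both mixed partials. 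The two arguments are therefore interchangeable: yours is shorter and delegates the analytic core to a standard theorem, while the paper's is self-contained at the cost of a slightly longer nested induction.
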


\begin{lemma} \label{Anisotropiccomposition}
    Let $U,V\subset\R^n$ and $l\ge 2$. Suppose $f\in C^k_{l}(V\times U; \R^n)$ and $g\in C^k_l(V\times f(V\times U);\R)$. Then
    \begin{equation}
        g\circ f\in C^k_{l}(V\times U;\R).
    \end{equation}
\end{lemma}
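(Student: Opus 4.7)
The plan is to reduce everything to an iterated chain-rule computation, reading the composition as $(g\circ f)(v,u):=g(v,f(v,u))$. Unpacking the definition of $C^k_l$, it suffices to prove that every mixed derivative $\partial_v^\beta\partial_u^\alpha(g\circ f)$ exists and is jointly continuous on $V\times U$ whenever $|\alpha|+|\beta|\le l$ and $|\beta|\le k$. Lemma~\ref{commutingderivatives} applied to $f$ and $g$ permits the order of differentiation to be freely swapped within the range we need, and then the same lemma applied to $g\circ f$ a posteriori upgrades pointwise equality to the required symmetry.

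First I would differentiate by induction on $|\alpha|+|\beta|$, organizing the terms via the multivariate Faà di Bruno formula. Every term that appears in the expansion of $\partial_v^\beta\partial_u^\alpha(g\circ f)$ has the structural form
\[
(\partial_v^{\beta_0}\partial_w^\gamma g)\bigl(v,f(v,u)\bigr)\cdot\prod_{j=1}^{|\gamma|}(\partial_v^{\beta_j}\partial_u^{\alpha_j}f)(v,u),
\]
where the multi-indices satisfy $\beta_0+\sum_j\beta_j=\beta$, $\sum_j\alpha_j=\alpha$, and $|\alpha_j|+|\beta_j|\ge 1$ for every $j$, since each $b$-derivative falling on $g$ in the chain rule must be paired with a genuine derivative of $f$. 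Summing the orders of the $f$-factors gives the key budget inequality $|\gamma|\le |\alpha|+|\beta|-|\beta_0|$, hence $|\gamma|+|\beta_0|\le |\alpha|+|\beta|\le l$.

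With the expansion in hand, I would verify that each factor lies in the correct anisotropic class. For the composed $g$-factor we have $|\beta_0|\le|\beta|\le k$ and $|\gamma|+|\beta_0|\le l$, so $\partial_v^{\beta_0}\partial_w^\gamma g$ is jointly continuous by the definition of $C^k_l$ applied to $g$. For each $f$-factor, $|\beta_j|\le|\beta|\le k$ and $|\alpha_j|+|\beta_j|\le l$, so $\partial_v^{\beta_j}\partial_u^{\alpha_j}f$ is jointly continuous by the same definition applied to $f$. Joint continuity of the map $(v,u)\mapsto(v,f(v,u))$ is inherited from joint continuity of $f$ (ensured by $f\in C^k\cap C_l$ with $k\ge 1$), so precomposing the continuous $g$-derivative with it preserves continuity, and finite products and finite sums of continuous functions are continuous. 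I expect the only genuine obstacle to be the multi-index bookkeeping in Faà di Bruno, but the fact that each $f$-factor consumes at least one unit of the total derivative budget forces the bound $|\gamma|+|\beta_0|\le|\alpha|+|\beta|$ automatically, so no term ever falls outside the region where the relevant derivatives of $g$ and $f$ are defined, and the remainder is mechanical.
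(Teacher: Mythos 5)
Your argument is correct and is exactly the route the paper has in mind: the paper omits this proof, describing it as an elementary consequence of the chain rule, and your Faà di Bruno bookkeeping (with the budget bound $|\gamma|+|\beta_0|\le|\alpha|+|\beta|\le l$, $|\beta_j|\le k$ keeping every factor inside the classes guaranteed by $f,g\in C^k_l$) is the fleshed-out version of that chain-rule argument. The only point to state carefully is the a posteriori symmetry of the mixed partials, which follows from Schwarz's theorem once joint continuity is established, as you indicate.
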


Let then $X\times Y$ be a smooth product manifold. We define the regularity class $C^k_l(X\times Y;M)$ as follows.

\begin{definition}[Anisotropic regularity on product manifolds]
    We say that $f\in C^k_l(X\times Y;M)$, if for each $p\in X\times Y$ there are local charts $(V_p,x)$ and $(U_p,y)$ of $X$ and $Y$ respectively such that $\proj_X(p)\in V_p$ and $\proj_Y(p)\in U_p$ and a local chart $(W_{f(p)},z)$ of $M$ such that $f((V_p\times U_p)\cap (X\times Y))\subset W_{g(p)}$ we have that
        \item $$z\circ f\circ (x^{-1}\times y^{-1})\in C^k_l(x(V_p)\times y(U_p);z(W_{f(p)})).$$
\end{definition}

\begin{remark}
    By Lemma~\ref{Anisotropiccomposition} anisotropic regularity in a single local chart at a point implies anisotropic regularity in all local charts at a point.
\end{remark}

Let then $E\to B$ be a smooth fiber bundle with a typical fiber $F$. The regularity class $C^k_l(E;M)$ is defined via local trivializations:

\begin{definition}[Anisotropic regularity on fiber bundles]
    Let $\{(U_i,\varphi_i)\}_{i\in I}$ be a bundle atlas of $E\to F$. We say that $f\in C^k_l(E;M)$ if for any local trivialization $\varphi_i$ of $E$ we have that $$f\circ\varphi_i^{-1}\in C^k_l(U_i\times F;M).$$
\end{definition} 

\begin{remark}
    For completeness, in the case of smooth $G$-bundles by Lemma~\ref{Anisotropiccomposition} we have that anisotropic regularity is preserved under the action of the structure group and is hence well defined. In the cases relevant to our applications this just means that anisotropic regularity is preserved by the local transition functions of $TM$ and $T^*M$.
\end{remark}

\subsubsection{Outline of the Proof of Theorem~\ref{hregmain}}

The proof of Theorem~\ref{hregmain} will proceed in three steps. First we prove that anisotropic regularity is preserved by inverse functions and then that fiberwise smooth anisotropic regularity is preserved by implicit functions. We then use the result for inverse functions to determine the anisotropic regularity of the Legendre transformation associated to a given function. The result then follows by combining the above.

\subsection{Anisotropic Regularity of Inverse Functions}

We begin with the anisotropic regularity of inverse functions.

\begin{proposition}\label{inverseregonbundles}
    Let $M$ be a smooth manifold and $(A,M,\pi_A,F_A)$ and $(B,M,\pi_B,F_B)$ be two smooth fiber bundles. Suppose $f:A\to B$ is a $C^1$ bundle diffeomorphism and moreover $f\in C^k_l(A;B)$ for $1\leq k\leq l$. Then $f^{-1}\in C^k_l(B;A)$.
\end{proposition}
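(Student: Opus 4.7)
The plan is to reduce to the Euclidean setting via bundle trivializations and then use implicit differentiation together with Lemma~\ref{Anisotropiccomposition}. First I would fix a point $b_0\in B$ and choose compatible bundle trivializations of $A$ and $B$ over a common open neighborhood $V\subset M$ of $\pi_B(b_0)$; since $f$ is a bundle diffeomorphism it covers a $C^1$-diffeomorphism of the base, so after composing with smooth bundle automorphisms we may assume that in these trivializations $f$ takes the form $F(v,u)=(v,\tilde{f}(v,u))$ with $\tilde{f}\in C^k_l(V\times U;\R^n)$. Fiber by fiber, $\tilde{f}(v,\cdot)$ is a $C^l$-diffeomorphism onto its image: this follows from the classical inverse function theorem applied to each fiber, since $f^{-1}$ is $C^1$ and $\tilde{f}(v,\cdot)$ is $C^l$. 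It then suffices to prove that the fiberwise inverse $g(v,w):=\tilde{f}(v,\cdot)^{-1}(w)$ belongs to $C^k_l(V\times U';\R^n)$.

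Fiber regularity of class $C^l$ for $g$ is immediate from the fiberwise classical inverse function theorem. To access the horizontal and mixed regularity, I would differentiate the identity $\tilde{f}(v,g(v,w))=w$ in $v$ and $w$ to obtain
\begin{equation*}
\partial_w g(v,w)=[\partial_u\tilde{f}(v,g(v,w))]^{-1},\qquad \partial_v g(v,w)=-[\partial_u\tilde{f}(v,g(v,w))]^{-1}\partial_v\tilde{f}(v,g(v,w)).
\end{equation*}
These formulas express the first derivatives of $g$ as compositions of $g$ with derivatives of $\tilde{f}$, followed by matrix inversion, which is smooth on $GL(n)$ and therefore preserves anisotropic regularity without any loss of its own.

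The argument would then proceed by induction on the total order of the mixed derivative $\partial_v^\alpha\partial_w^\beta g$. The inductive hypothesis provides control of lower-order derivatives of $g$ in the appropriate anisotropic class, and Lemma~\ref{Anisotropiccomposition} handles the compositions with $(v,g(v,\cdot))$ appearing in the derivative formulas above differentiated further. Each application of $\partial_v$ (respectively $\partial_u$) consumes one unit of horizontal (respectively vertical) regularity of $\tilde{f}$, which is precisely what is available after a corresponding drop in the indices of the target class $C^{k-|\alpha|}_{l-|\alpha|-|\beta|}$ that $\partial_v^\alpha\partial_w^\beta g$ must lie in. Finally, since anisotropic regularity on fiber bundles is defined via local trivializations and is preserved by smooth transition maps, the local result patches to give $f^{-1}\in C^k_l(B;A)$.

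The main obstacle is the careful bookkeeping of the anisotropic indices throughout the induction: one has to verify that every intermediate quantity entering a composition lies in an anisotropic class satisfying the hypothesis $1\leq k'\leq l'$ required by Lemma~\ref{Anisotropiccomposition}, so that the lemma may be iterated without losing more regularity than is actually needed. The edge case $l=1$ (which forces $k=1$) falls outside the scope of Lemma~\ref{Anisotropiccomposition}, but in that case the conclusion reduces to the classical $C^1$ inverse function theorem and can be handled separately.
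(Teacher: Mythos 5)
Your proposal is correct and follows essentially the same route as the paper: reduce via local trivializations to a fiber-preserving map $(v,u)\mapsto(v,\tilde f(v,u))$ in $C^k_l$, then establish anisotropic regularity of the fiberwise inverse by implicit differentiation, smoothness of matrix inversion on $GL(n)$, and induction on the order of mixed derivatives --- which is exactly the content of the paper's Lemma~\ref{Anisotropicinverse}, proven in Appendix~\ref{app:anisotropic-spaces}, that the paper's proof of Proposition~\ref{inverseregonbundles} invokes. The only cosmetic difference is that you re-derive that Euclidean lemma inline rather than citing it, and note that in the paper's setting the bundle diffeomorphism covers the identity, so no straightening of the base map (which would be delicate for a merely $C^1$ base diffeomorphism) is actually needed.
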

The result is a relatively direct application of the following result, the proof of which can be found in Appendix~\ref{app:anisotropic-spaces}.

\begin{lemma} \label{Anisotropicinverse}
    Suppose $V\subset\R^m$ and $U\subset\R^n$ are open and $f:V\times U\to\R^n$ is such that 
    \begin{itemize}
        \item $f\in C^k_\ell(V\times U;\R^n)$,
        \item $f(v,U)=f(w,U)=:f(U)$ for all $v,w\in V$,
        \item for each $v\in V$ the function $f(v,\dummy)$ is a $C^\ell$-diffeomorphism between $U$ and $f(U)$.
    \end{itemize}
    Then
    \begin{equation}
        f^{-1}\in C^k_\ell(V\times f(U);\R^n).
    \end{equation}
\end{lemma}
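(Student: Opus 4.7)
The plan is to combine three ingredients: (i) the classical pointwise inverse function theorem to obtain fiberwise smoothness of $g := f^{-1}$, (ii) implicit differentiation of the identity $f(v, g(v, y)) = y$ to express all partial derivatives of $g$ as universal rational expressions in the partial derivatives of $f$ evaluated along $(v, g(v, y))$, and (iii) Lemma~\ref{Anisotropiccomposition} to propagate anisotropic regularity through these compositions.

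For the base case, fix $v \in V$. Since $f(v, \bullet)$ is a $C^\ell$-diffeomorphism, the classical inverse function theorem yields $g(v, \bullet) \in C^\ell(f(U); \R^n)$ together with the usual recursive formulas expressing $\partial_y^\beta g(v, y)$ as a universal polynomial in $\partial_u^{\beta'} f$ evaluated at $(v, g(v, y))$, divided by a power of $\det D_u f(v, g(v, y))$. Joint continuity of $g$ on $V \times f(U)$ follows from joint continuity of $f$ and a local uniform inverse function estimate based on the fiberwise invertibility and continuity of $D_u f$; plugging this into the recursive formulas upgrades it to joint continuity of every $\partial_y^\beta g$ with $|\beta| \le \ell$, that is, to $g \in C^0_\ell(V \times f(U); \R^n)$.

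I would then induct on $k$. Differentiating the identity in $v$ gives
\begin{equation*}
    \partial_v g(v, y) = -[D_u f(v, g(v, y))]^{-1} D_v f(v, g(v, y)),
\end{equation*}
and iterating this together with the pure fiber formulas shows that every mixed partial $\partial_v^\alpha \partial_y^\beta g$ is a universal rational combination of expressions of the form $\partial_v^{\alpha'} \partial_u^{\beta'} f \circ (v, g(v, y))$ with $|\alpha'| \le |\alpha|$ and $|\alpha'| + |\beta'| \le |\alpha| + |\beta|$, divided by a power of $\det D_u f \circ (v, g(v, y))$. Assuming inductively that $g \in C^k_\ell$, the definition of $C^{k+1}_\ell$ applied to $f$ places $D_u f$ and $D_v f$ in $C^k_{\ell-1}$ (with a boundary case $k = \ell$ handled by a direct check against the definition); Lemma~\ref{Anisotropiccomposition} then gives the compositions $D_u f \circ (\,\cdot\,, g)$ and $D_v f \circ (\,\cdot\,, g)$ in $C^k_{\ell-1}$, and since matrix inversion and multiplication preserve this class, we conclude $\partial_v g \in C^k_{\ell-1}$, which (together with Lemma~\ref{commutingderivatives}) is precisely the condition needed to promote $g$ from $C^k_\ell$ to $C^{k+1}_\ell$.

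The main obstacle is the bookkeeping: verifying that each factor appearing in the formula for $\partial_v^\alpha \partial_y^\beta g$ lies in precisely the anisotropic class that, after composition with $g$, matches what Lemma~\ref{Anisotropiccomposition} can deliver. The crucial point is that each application of $\partial_v$ costs one unit of both transverse and fiber regularity in the definition of $C^k_\ell$, while each application of $\partial_u$ costs one extra fiber derivative of $f$; the hypothesis $k \le \ell$ is exactly what prevents either bookkeeping stream from going negative.
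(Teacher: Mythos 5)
Your overall strategy is essentially the paper's: express derivatives of $g=f^{-1}$ by implicit differentiation as compositions of derivatives of $f$ with $(v,y)\mapsto (v,g(v,y))$ and the smooth matrix-inversion map, then induct over derivative orders to verify the defining conditions of $C^k_\ell$. The packaging differs (you induct on $k$ and lean on Lemma~\ref{Anisotropiccomposition} and Lemma~\ref{commutingderivatives}, whereas the paper checks the three conditions of the definition directly), but that part of your bookkeeping is sound: $g\in C^k_\ell$ together with $\partial_{v_i}g\in C^k_{\ell-1}$ for all $i$ does promote $g$ to $C^{k+1}_\ell$, and $f\in C^{k+1}_\ell$ does place $D_uf$ and $D_vf$ in $C^k_{\ell-1}$.

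The genuine gap is at the start of the induction: you never establish that $\partial_v g$ exists before you ``differentiate the identity $f(v,g(v,y))=y$ in $v$.'' Your base case only delivers fiberwise $C^\ell$ regularity plus joint continuity of the fiber derivatives (a $C^0_\ell$ statement), and from that alone differentiability of the inverse in the transverse variable $v$ does not follow; implicit differentiation presupposes exactly what is to be proved, and in the step from $k=0$ to $k=1$ your inductive hypothesis gives you nothing differentiable in $v$ to apply the chain rule to. This transverse differentiability is the analytic crux of the lemma, and the paper secures it first by applying the classical (joint) inverse function theorem to the extended map $F(v,u)=(v,f(v,u))$, whose Jacobian is block triangular with invertible block $J_uf$, yielding $f^{-1}\in C^k(V\times f(U))$ before any implicit differentiation; the formula $\partial_{v_i}f^{-1}=-(J_uf\circ F^{-1})^{-1}(f_{v_i}\circc F^{-1})$ is then legitimate. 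You would need to insert this step (or an equivalent difference-quotient argument) to give your induction a valid starting point. A smaller point in the same spirit: Lemma~\ref{Anisotropiccomposition} is stated for $l\ge 2$ and for the classes with $k\ge 1$, so your $k=0$ base composition and the $\ell-1=1$ edge case fall outside its hypotheses and need a separate, if easy, check.

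Correction to the formula above: it should read $\partial_{v_i}f^{-1}=-(J_uf\circ F^{-1})^{-1}(f_{v_i}\circ F^{-1})$.
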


\begin{proof}[Proof of Proposition~\ref{inverseregonbundles}]
    Pick $p\in B$ and a neighborhood $\Omega_B\subset B$ of $p$. By assumption there is some neighborhood $\Omega_A\subset A$ of $f^{-1}(p)$ such that $\Omega_B=f(\Omega_A)$. Since $f$ covers the identity, we have that
    \begin{equation}
        \pi_B(\Omega_B)=\pi_A(\Omega_A)=:U.
    \end{equation}
    Pick a local trivialization $\varphi_B$ of $B$ at $p$ and a local trivialization $\varphi_A$ of $A$ at $f^{-1}(p)$. After possibly choosing $\Omega_A$ smaller, we may assume that $(U,x)$ is a local chart on $M$. Take a local chart $(V_B,y_B)$ of $F_B$ at $\proj_{F_B}(\varphi_B(p))$ and a local chart $(V_A,y_A)$ of $F_A$ at $\proj_{F_A}(\varphi_A(f^{-1}(p)))$. Then the map
    \begin{equation}\label{map}
        (x\times y_B)\circ\varphi_B\circ f\circ \varphi_A^{-1}\circ (x^{-1}\times y_A^{-1}):x(U)\times y_A(V_A)\to x(U)\times y_B(V_B)
    \end{equation}
    can be written in coordinates as
    \begin{equation}
        (x,y_A)\mapsto (x,\Tilde{f}(x,y_A)),
    \end{equation}
    where the map $\Tilde{f}:x(U)\times y_A(V_A)\to y_B(V_B)$ is determined by \eqref{map}. Since $f\in C^k_l(A;B)$, we have by definition that $\Tilde{f}\in C^k_l(x(U)\times y_A(V_A);y_B(V_B))$. Since the map $f$ is a bundle diffeomorphism, we have that $\Tilde{f}(x,y_A(V_A))=y_B(V_B)$ for all $x\in x(U)$. Since $f$ is a $C^1$-diffeomorphism, so is $\Tilde{f}$ and since $\Tilde{f}\in C_l(x(U)\times y_A(V_A))$, by the inverse function theorem $\Tilde{f}(x,\dummy)$ is a $C^l$-diffeomorphism onto $y_B(V_B)$ for all $x\in x(U)$. Hence the conditions of Lemma~\ref{Anisotropicinverse} are satisfied and we get that $\Tilde{f}^{-1}\in C^k_l(x(U)\times y_B(V_B);y_A(V_A))$.

    Since the map
    \begin{equation}
        (x,y_B)\mapsto (x,\Tilde{f}(\Tilde{f}^{-1}(x,y_B))
    \end{equation}
    gives the identity map of $x(U)\times y_B(V_B)$ and the map
    \begin{equation}
        (x,y_A)\mapsto (x,\Tilde{f}^{-1}(\Tilde{f}(x,y_A)))
    \end{equation}
    gives the identity map of $x(U)\times y_A(V_A)$, we can conclude that
    \begin{equation}
        \begin{split}
            &\left((x\times y_B)\circ\varphi_B\circ f\circ \varphi_A^{-1}\circ (x^{-1}\times y_A^{-1})\right)^{-1}(x,y_B) \\
            &=\left((x\times y_A)\circ \varphi_A\circ f^{-1}\circ \varphi_B^{-1}\circ (x^{-1}\times y_B^{-1})\right)(x,y_B) \\
            &=(x,\Tilde{f}^{-1}(x,y_B))
        \end{split}
    \end{equation}
    for all $(x,y_B)\in x(U)\times y_B(V_B)$. Hence
    \begin{equation}
        (x\times y_A)\circ \varphi_A\circ f^{-1}\circ \varphi_B^{-1}\circ (x^{-1}\times y_B^{-1})\in C^k_l(x(U)\times y_B(V_B);x(U)\times y_A(V_A)),
    \end{equation}
     letting us conclude that
    $
        f^{-1}\in C^k_l(B;A)
    $.
\end{proof}

\subsection{Anisotropic Regularity of Implicit Functions}

We move on to the anisotropic regularity of implicit functions. We start with the following result, the proof of which can be found in Appendix~\ref{app:anisotropic-spaces}.

\begin{lemma} \label{anisotropicimplicit}
    Let $V\times U\times W\subset \R^n\times\R^m\times \R^d$ be an open set and let $F:V\times U\times W \to \R^d$ be in $C^k(V\times U\times W;\R^d)$ with $k\ge 1$ such that for each $p\in V\times U$ we have $F(p,\dummy)\in C^\infty(W;\R^d)$.
    Denote variables in $V,U$ and $W$ by $v,u$ and $w$ respectively.

    Suppose there is a continuously differentiable function $\phi:V\times U\to W$ for which we have for all $v,u\in V\times U$ that
    \begin{equation}
        F(v,u,\phi(v,u))=0\quad \text{and}\quad  \det(J_wF(v,u,w))\vert_{(v,u,\phi(v,u))}\neq 0.
    \end{equation}
    Then, if
    \begin{itemize}
        \item the functions $\partial^\alpha_uF$ are $k$-times continuously differentiable in $v$ for all $\alpha$,
        \item the function $J_wF$ is $k$ times continuously differentiable with respect to $v$ and $C^\infty$-smooth with respect to $u$ and
        \item the functions $\partial_u^{\alpha}J_wF$ are $k$ times continuously differentiable with respect to $v$ for all $\alpha$,
    \end{itemize}
    we have that $\phi\in C^k_\infty(V\times U;W)$.
\end{lemma}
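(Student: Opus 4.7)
The plan is to prove the lemma by implicit differentiation combined with a nested induction that tracks regularity in the anisotropic classes introduced above. Starting from the identity $F(v,u,\phi(v,u))=0$ and using invertibility of $J_wF$, implicit differentiation in $u$ and in $v$ yields the master formulas
\[
\partial_u\phi(v,u) = -(J_wF)^{-1}\partial_uF\,\vert_{(v,u,\phi(v,u))},
\qquad
\partial_v\phi(v,u) = -(J_wF)^{-1}\partial_vF\,\vert_{(v,u,\phi(v,u))}.
\]
Iterating via the Fa\`a di Bruno formula, every mixed derivative $\partial_v^a\partial_u^b\phi$ is a universal polynomial in the entries of $(J_wF)^{-1}$ and in derivatives $\partial_v^{a'}\partial_u^{b'}\partial_w^{c'}F$ evaluated at $(v,u,\phi(v,u))$, together with derivatives of $\phi$ of strictly lower total order. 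The goal is then to show that every such expression lies in $C^k_\infty(V\times U;\R^d)$.

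I would carry out two nested inductions. First, holding $a=0$, I would induct on $|b|$ to show that $\phi$ is $C^\infty$ in $u$ and that every $\partial_u^b\phi$ is $C^k$ in $v$. The base case is the hypothesized $C^1$-regularity of $\phi$. At each inductive step the master formula presents $\partial_u^b\phi$ as a composition of factors that are each $C^k$ in $v$ and $C^\infty$ in $u$: the hypotheses cover $\partial_u^\alpha F$ and $\partial_u^\alpha J_wF$ directly; Cramer's rule covers $(J_wF)^{-1}$ via polynomial and quotient operations on entries; and $\partial_u^\gamma\phi$ for $|\gamma|<|b|$ is covered by induction. Lemma~\ref{Anisotropiccomposition} applied to the composition with $(v,u)\mapsto(v,u,\phi(v,u))$ closes the step. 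A parallel induction on $|a|\le k$, driven by the master formula for $\partial_v\phi$, upgrades the result to joint $C^k_\infty$-regularity, since each $\partial_v$-derivative costs one order of $v$-regularity but preserves smoothness in $u$. Lemma~\ref{commutingderivatives} legitimizes all interchanges of $\partial_u$- and $\partial_v$-derivatives along the way.

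The main obstacle is that, besides $J_wF$, the Fa\`a di Bruno expressions also involve higher $w$-derivatives $\partial_w^cF$ with $c\ge 2$, which are not mentioned in the hypotheses. I would resolve this by noting that $J_wF=\partial_wF$ is itself $C^\infty$ in $w$ because $F$ is, so every $\partial_w^cF$ exists as a pointwise limit of difference quotients of $J_wF$; the hypothesis that $\partial_u^\alpha J_wF$ is $C^k$ in $v$ then transfers, uniformly on compact subsets of $W$, to every $\partial_u^\alpha\partial_w^cF$. A less computational alternative, avoiding the Fa\`a di Bruno bookkeeping altogether, is to apply the anisotropic inverse function theorem (Lemma~\ref{Anisotropicinverse}) to the straightening map $\Phi(v,u,w)=(v,u,F(v,u,w))$, whose Jacobian in $(u,w)$ is block triangular with diagonal $(I,J_wF)$ and hence invertible. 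The implicit function $\phi$ is then recovered as the last component of the local inverse $\Phi^{-1}(v,u,0)$, and the claim follows directly from Lemma~\ref{Anisotropicinverse} once one verifies that $\Phi$ belongs to $C^k_\infty$ with the $(u,w)$-coordinates playing the role of the smooth direction.
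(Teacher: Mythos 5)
Your main line — implicit differentiation giving $\partial_u\phi=-(J_wF)^{-1}\partial_uF$ and $\partial_v\phi=-(J_wF)^{-1}\partial_vF$ at $(v,u,\phi(v,u))$, followed by induction on the order of the derivatives, using smoothness of matrix inversion and the composition structure — is essentially the paper's proof, which bootstraps first $\phi\in C_\infty$ in $u$, then $\phi\in C^k$ in $v$, and finally the mixed statements $\partial_u^\alpha\phi\in C^k$ by induction on $\vert\alpha\vert$.

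The one step I would not accept as written is your treatment of the higher $w$-derivatives. From ``$\partial_w^cF$ exists as a pointwise limit of difference quotients of $J_wF$'' you cannot conclude that $\partial_u^\alpha\partial_w^cF$ is $C^k$ in $v$: a pointwise (or even locally uniform) limit of functions that are $C^k$ in $v$ need not be $C^k$ in $v$ unless the $v$-derivatives of the difference quotients themselves converge locally uniformly, and nothing in the hypotheses provides that. The correct reading is that the chain-rule terms $\partial_w^cF$, $c\geq 2$, require interpreting the hypotheses as joint regularity in $(u,w)$ with all such mixed derivatives $C^k$ in $v$; the paper's proof uses this implicitly (it simply invokes ``the regularity of $G$, $J_wF$ and $F_{u_i}$'' at each inductive step), and in the intended application $F=\chi(\Gamma_c)$ is polynomial in the fiber variables, so the point is harmless there — but your proposed ``transfer'' argument is not a valid substitute for it. Your alternative route through Lemma~\ref{Anisotropicinverse} applied to $\Phi(v,u,w)=(v,u,F(v,u,w))$ is a sensible idea, but it is not free either: that lemma requires the fiberwise image to be independent of $v$ and $\Phi\in C^k_\infty$ with respect to the splitting of $v$ against $(u,w)$, so it needs a localization argument plus exactly the same joint $(u,w)$-regularity you were trying to avoid, and therefore does not bypass the issue.
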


For implicit functions we prove the following less general result.

\begin{proposition}\label{anisotropicimplicitmain}
    Let $(E,M,\pi_E,F_E)$ be a smooth fiber bundle, $W\subset \R^d$ an open set and let $F\in C^k(E\times W;\R^d)$ be a function such that $F(p,\dummy)\in C^\infty(W;\R^d)$. Suppose that there is a function $\phi\in C^1(E;W)$ such that
    \begin{equation}
        F(p,\phi(p))=0\quad \text{and}\quad \det(J_wF(p,w))\vert_{(p,\phi(p))}\neq 0
    \end{equation}
    for all $p\in E$. If for each $p\in E$ there is local bundle chart $(U,\varphi)$ at $p$ with $x$ the coordinate function on $U$ and a local coordinate chart $(V,y)$ at $\proj_{F_E}(\varphi(p))$ such that the function
    \begin{equation}
        (x,y,z)\mapsto F((\varphi^{-1}\circ (x^{-1}\times y^{-1}))(x,y),z)
    \end{equation}
    satisfies the conditions of Lemma~\ref{anisotropicimplicit} in $x(U)\times y(V)\times W$, then $\phi\in C^k_\infty(E;W)$.
\end{proposition}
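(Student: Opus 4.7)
The plan is to reduce the bundle statement to the Euclidean implicit-function result of Lemma~\ref{anisotropicimplicit} by working in local trivializations, and then to invoke chart-independence of the anisotropic regularity classes to globalize.

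First I would fix an arbitrary point $p\in E$ and choose the local bundle chart $(U,\varphi)$ and fiber chart $(V,y)$ supplied by the hypothesis, so that the coordinate representation
$\Tilde{F}(x,y,z):=F((\varphi^{-1}\circ(x^{-1}\times y^{-1}))(x,y),z)$
satisfies Lemma~\ref{anisotropicimplicit} on $x(U)\times y(V)\times W$. Setting $\Tilde{\phi}(x,y):=\phi((\varphi^{-1}\circ(x^{-1}\times y^{-1}))(x,y))$, I would verify that $\Tilde{\phi}$ is $C^1$, satisfies $\Tilde{F}(x,y,\Tilde{\phi}(x,y))=0$, and inherits the nondegeneracy of the $z$-Jacobian along its graph; all three properties follow immediately from the hypotheses on $\phi$ and $F$ together with the chain rule. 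Applying Lemma~\ref{anisotropicimplicit} then yields $\Tilde{\phi}\in C^k_\infty(x(U)\times y(V);W)$.

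Next I would upgrade this single-chart conclusion to the global statement $\phi\in C^k_\infty(E;W)$. By definition, membership in $C^k_\infty(E;W)$ requires the regularity to hold in \emph{every} local trivialization, but the remark following the product-manifold definition, which rests on the anisotropic composition Lemma~\ref{Anisotropiccomposition}, tells us that anisotropic regularity verified in one chart at a point propagates to every other chart at the same point via composition with the smooth transition maps of $E$. Since $p\in E$ is arbitrary, this globalizes to all of $E$.

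The main obstacle will be careful bookkeeping rather than a genuinely new analytic idea: one has to confirm that the coordinate form $\Tilde{\phi}$ really does satisfy the hypotheses of Lemma~\ref{anisotropicimplicit} (and not, say, a permuted or reparametrized version), and that the change-of-trivialization step remains valid despite the target $W$ being a Euclidean open set rather than a genuine fiber bundle. Beyond this verification, the proposition is essentially a geometric wrapping of the Euclidean anisotropic implicit function theorem already packaged in Lemma~\ref{anisotropicimplicit}.
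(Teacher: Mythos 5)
Your proposal is correct and follows essentially the same route as the paper: localize in the chart provided by the hypothesis, apply Lemma~\ref{anisotropicimplicit} to the coordinate representation of $F$ and $\phi$, and conclude via the definition of $C^k_\infty(E;W)$ together with the chart-independence furnished by Lemma~\ref{Anisotropiccomposition}. The paper's proof is just a terser version of this (it states the single-chart regularity and says the claim follows by definition), so your extra care about change of trivialization only makes explicit what the paper leaves to its earlier remark.
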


\begin{proof}
    Let $p\in E$. By the assumptions there is a bundle chart $(U,\varphi)$ at $p$ with $x$ a coordinate function on $U$, and coordinate chart $(V,y)$ of $F_E$ at $\proj_{F_E}(\varphi(p))$ such that
    \begin{equation}
        \phi\circ \varphi^{-1}\circ (x^{-1}\times y^{-1})\in C^k_\infty(x(U)\times y(V);\R^d).
    \end{equation}
    The claim follows by definition.
\end{proof}

\subsection{Anisotropic Regularity Under Legendre Transforms}

Before discussing the anisotropic regularity of Legendre transforms, we give the following lemma.
\begin{lemma} \label{helplemma 1}
    Suppose $H:T^*M\mO\to\R$ is fiberwise strictly convex and $H\in C^k_{\ell}(T^*M\mO)$ for any $1\leq k\leq l$ and $\ell\ge 2$. Then
    \begin{equation}
        \ell_H:T^*M\mO\to TM\mO
    \end{equation}
    is a $C^1$ bundle diffeomorphism and $\ell_H\in C^k_{l-1}(T^*M\mO;TM\mO)$.
\end{lemma}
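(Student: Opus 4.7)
The plan is to verify the two assertions separately: the $C^1$ bundle diffeomorphism property, and the anisotropic regularity membership.

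First I would work in local bundle coordinates, where
\begin{equation*}
\ell_H(x,p) = (x, \partial_{p_1} H(x,p), \dots, \partial_{p_n} H(x,p)).
\end{equation*}
In this form $\ell_H$ manifestly covers the identity on $M$, so it is a bundle morphism. Since $k\ge 1$ and $\ell\ge 2$, the hypothesis $H\in C^k_\ell$ guarantees that $H$ is at least $C^1$ on the base and $C^2$ on the fibers; this already makes $\ell_H$ a $C^1$ map between the total spaces. The Jacobian of the fiber component of $\ell_H$ is precisely the Hessian $\partial^2 H/\partial p_i\partial p_j$, which is positive definite by strict fiberwise convexity and hence invertible. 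The fiberwise inverse function theorem then shows that each $\ell_H(x,\cdot)$ is a local $C^{\ell-1}$ diffeomorphism, and injectivity of the gradient of a strictly convex function upgrades this to a global $C^{\ell-1}$ diffeomorphism onto its image; surjectivity onto the whole fiber of $TM\mO$ is part of the standing assumption built into the very definition of $\ell_H$ recalled at the start of the subsection. Combining these ingredients yields the $C^1$ bundle diffeomorphism property.

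Next I would establish the anisotropic regularity by working in a local trivialization of $T^*M\mO$ over a chart $(U,x)$ of $M$. Since the base component of $\ell_H$ is the identity on $x(U)$, it suffices to show that each fiber component $\partial_{p_i} H$ lies in $C^k_{\ell-1}$. Starting from the three defining conditions of $C^k_\ell$ applied to $H$ and applying one fiber derivative $\partial_{p_i}$, I would verify directly that the resulting conditions are precisely those defining $C^k_{\ell-1}$: the base regularity $k$ is preserved, the fiber regularity drops by exactly one, and the commutation of mixed partials supplied by Lemma~\ref{commutingderivatives} ensures that the cross-regularity conditions remain in force after the single fiber differentiation. Assembling these conclusions across trivializations via the definition of anisotropic regularity on fiber bundles yields $\ell_H\in C^k_{\ell-1}(T^*M\mO;TM\mO)$.

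The main obstacle I anticipate is the global surjectivity of the fiberwise Legendre map. Strict convexity alone gives only injectivity and local bijectivity, while full global bijectivity typically requires an additional coercivity or growth condition on $H$. In the present lemma this is handled by taking the standing hypothesis, already implicit in the definition of $\ell_H$, that $\ell_H$ is an invertible map; once this is granted, everything else is an application of the fiberwise inverse function theorem combined with careful bookkeeping of the anisotropic regularity conditions.
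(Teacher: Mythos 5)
Your proof is correct and follows essentially the same route as the paper: in local bundle coordinates $\ell_H=(x,\partial_{\xi_1}H,\dots,\partial_{\xi_n}H)$, the $C^1$ bundle diffeomorphism property is delegated to classical Legendre-transform theory (the paper simply cites~\cite{dahl2006brief}), and the membership $\ell_H\in C^k_{l-1}$ follows because one fiber derivative of a $C^k_l$ function lies in $C^k_{l-1}$, with Lemma~\ref{commutingderivatives} justifying the exchange of horizontal and vertical derivatives in the cross-regularity conditions, exactly as in the paper's argument. The only caveat, which the paper shares, is that strict fiberwise convexity alone gives neither a positive definite Hessian nor global surjectivity onto $TM$ minus the zero section; both are supplied by the standing assumptions on $H$ (fiberwise invertible Hessian and, in the intended application $H=\tfrac12\tilde F^2$, fiberwise $2$-homogeneity and strong convexity), which is precisely the reading you adopt.
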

\begin{proof}
     That $\ell_H$ is a $C^1$ diffeomorphism between $T^*M_0$ and $TM$ under the prevailing regularity assumptions is classical and we refer to~\cite{dahl2006brief} for a proof. It hence suffices to establish anisotropic regularity.
     
     Let $p\in T^*M\mO$ and let $(U,\varphi)$ be a bundle chart of $T^*M\mO$ at $p$, induced by the smooth structure of $M$. Denote by $\pi'$ the bundle map of $T^*M\mO$ and by $(x,\xi)$ the local coordinates of $T^*M\mO$ in $\varphi((\pi')^{-1}(U))$. Let $(U,\psi)$ be a bundle chart of $TM\mO$ at $\ell_H(p)$, induced by the smooth structure of $M$ and denote by $\pi$ the bundle map of $TM\mO$ and by $(x,y)$ the local coordinates of $TM\mO$ at $\psi(\ell_H(p))$. Then
    \begin{equation}
        \Tilde{\ell}_H:=(x\times y)\circ \psi\circ \ell_H\circ \varphi^{-1}\circ (x^{-1}\times \xi^{-1})=(x,\partial_{\xi_1}H(x,\xi),\dots,\partial_{\xi_n}H(x,\xi))
    \end{equation}
    in $\varphi((\pi')^{-1}(U))=x(U)\times(\R^n\setminus\{0\})$. Since by assumption
    \begin{equation}
        H\circ \varphi^{-1}\circ(x^{-1}\times\xi^{-1})\in C^k_l(x(U)\times(\R\setminus\{0\}),
    \end{equation} 
    we get that $\Tilde{\ell}_H\in C^k_{l-1}(x(U)\times (\R^n\setminus\{0\});x(U)\times y(\psi(\ell_H(p))))$. The claim follows.
\end{proof}

With this we obtain the following result.
\begin{proposition} \label{Anisotropiclegendre}
    Let $\Tilde{F}:T^*M\mO\to\R$ be a Finsler function of regularity class $C^k_l(T^*M\mO)$ with $1\leq k\leq l$ and $l\ge 3$. Define $\ell_{\frac{1}{2}\Tilde{F}^2}(0)=0$. Then
    \begin{equation}
        F:=(\ell_{\frac{1}{2}\Tilde{F}^2}^{-1})^*\Tilde{F}
    \end{equation}
    is a Finsler function on $TM$ and is of class $C^k_{l-1}(TM\mO)$.
\end{proposition}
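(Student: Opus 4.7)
The plan is to apply the previously established anisotropic tools to the four-stage construction of $F$ from $\tilde F$: square, take the fiber-Legendre map $\ell_H$ associated to $H := \tfrac{1}{2}\tilde F^2$, invert, and pull back $\tilde F$. The only genuine loss of a fiber derivative occurs at the Legendre step, and this is precisely where the hypothesis $l\geq 3$ is needed.

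First I would observe that $H$ is fiberwise strictly convex (this is strong convexity of $\tilde F$ translated to $T^*M$) and belongs to $C^k_l(T^*M\mO)$, as it is the composition of $\tilde F \in C^k_l$ with the smooth map $z\mapsto \tfrac{1}{2}z^2$, which is covered by Lemma~\ref{Anisotropiccomposition} since $l\geq 2$. Lemma~\ref{helplemma 1} then gives that $\ell_H\colon T^*M\mO\to TM\mO$ is a $C^1$ bundle diffeomorphism of anisotropic class $C^k_{l-1}$; the drop from $l$ to $l-1$ on the fibers encodes the single fiber derivative of $H$ appearing in the definition of $\ell_H$. Since $\ell_H$ covers the identity on $M$, Proposition~\ref{inverseregonbundles} applied with regularity indices $(k,l-1)$ yields $\ell_H^{-1}\in C^k_{l-1}(TM\mO;T^*M\mO)$.

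By construction $F = \tilde F\circ \ell_H^{-1}$. In a joint local trivialization compatible with a chart on $M$, the bundle map $\ell_H^{-1}$ has the form $(x,y)\mapsto (x,g(x,y))$ with $g\in C^k_{l-1}$, so Lemma~\ref{Anisotropiccomposition} applies to the composition $\tilde F(x,g(x,y))$ with $l-1$ in place of $l$. The hypothesis $l\geq 3$ provides exactly the condition $l-1\geq 2$ that the lemma requires, and the output is $F\in C^k_{l-1}(TM\mO)$. That $F$ is a Finsler function is the standard Legendre-duality verification: positive definiteness and fiberwise $1$-homogeneity are inherited from $\tilde F$ by Euler's theorem as laid out in the construction preceding the proposition, strong convexity of $F$ is dual to that of $\tilde F$ (the fundamental tensor of $F$ at $\ell_H(x,p)$ is the inverse of the fundamental tensor of $\tilde F$ at $(x,p)$, hence positive definite), and fiberwise smoothness of $F$ away from the zero section follows from fiberwise smoothness of $\tilde F$ together with fiberwise smoothness of the restrictions of $\ell_H^{-1}$ to fibers (which in turn holds because $H$ is fiberwise $C^\infty$ whenever $\tilde F$ is).

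The main obstacle is really careful bookkeeping of the two indices through the chain: one has to ensure that the single derivative loss incurred at the Legendre step does not cascade through the subsequent inversion and pullback. The hypothesis $l\geq 3$ is what keeps Lemma~\ref{Anisotropiccomposition} available after the drop from $l$ to $l-1$; without it, the composition step at the end would collapse. Everything else is a direct chaining of the anisotropic inverse and implicit function machinery already developed in this section.
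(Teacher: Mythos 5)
Your proposal is correct and follows essentially the same route as the paper: Lemma~\ref{helplemma 1} to get $\ell_{\frac{1}{2}\Tilde{F}^2}\in C^k_{l-1}$ as a $C^1$ bundle diffeomorphism, Proposition~\ref{inverseregonbundles} for the inverse, Lemma~\ref{Anisotropiccomposition} for the final pullback, and the classical Legendre-duality argument for the Finsler-function properties. Your additional bookkeeping (why $H\in C^k_l$ and why $l\geq 3$ gives $l-1\geq 2$ for the composition lemma) just makes explicit what the paper leaves implicit.
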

\begin{proof}
    The convexity and homogeneity of $F$ is classical and for the proof we refer to~\cite{dahl2006brief}. Hence it suffices to establish regularity.
    
    By Lemma~\ref{helplemma 1}, if $\Tilde{F}\in C^k_l(T^*M\mO)$, then $\ell_{\frac{1}{2}\Tilde{F}^2}\in C^k_{l-1}(T^*M\mO;TM\mO)$. Since $\ell_{\frac{1}{2}\Tilde{F}^2}$ is a $C^1$-bundle diffeomorphism, by Proposition~\ref{inverseregonbundles} we have that $(\ell_{\frac{1}{2}\Tilde{F}^2})^{-1}\in C^k_{l-1}(TM\mO;T^*M\mO)$. The claim follows by Lemma~\ref{Anisotropiccomposition}.
\end{proof}

It is worth noting that the Legendre transformation may reduce vertical regularity, since the definition involves taking first order vertical derivatives.

\subsection{Proof of the Main Result on Horizontal Regularity}
\label{sec:thm-1-proof}

We have now developed sufficient tools in order to prove Theorem~\ref{hregmain}.

\begin{proof}[Proof of Theorem~\ref{hregmain}]
    The Finsler function $(\lambda_{qP}^c)^{1/2}$ satisfies by definition in the local coordinates $(x,p)$ of $T^*M\mO$ the equation
    \begin{equation}
        \chi(\Gamma_c)(x,p)=\text{det}(\Gamma_c(x,p)-\lambda_{qP}^c(x,p)\text{Id})=0,
    \end{equation}
    where $\Gamma$ is the Christoffel matrix. The function $\chi(\Gamma_c)$ is polynomial in $\lambda$ and is hence smooth in $\lambda$. By assumption the root $\lambda_{qP}^c(x,p)$ is simple and hence for each $(x,p)\in T^*M\mO$ we have that
    \begin{equation}
        \partial_\lambda\chi(\Gamma_c(x,p,\lambda))\vert_{(x,p,\lambda_{qP}^c(x,p))}\neq 0.
    \end{equation}
    Since $c\in C^k(M)$ and $\chi(\Gamma_c)$ is polynomial also in the fiber coordinates, the conditions of Proposition~\ref{anisotropicimplicitmain} are satisfied. Hence we have that $\lambda_{qP}^c\in C^k_\infty(T^*M\mO)$, which then by Proposition~\ref{Anisotropiclegendre} implies that \begin{equation}
        F^c_{qP}=(\ell_{\frac{1}{2}\lambda_{qP}^c}^{-1})^*(\lambda_{qP}^c)^{1/2}\in C^k_\infty(TM\mO).
    \end{equation}
    This concludes the proof.
\end{proof}

\section{Vertical Regularity}
\label{sec:vertical-regularity}

We now move on to discuss vertical regularity. Let $\K\in\{\R,\C\}$ and $p\in\C^n$ and define the set $\Sigma_\K(n)$ of singular stiffness tensors as
\begin{equation}
    \Sigma_{\K}(n)=\{c\in E_{\K}(n):P_c\ \text{has a multiple root for some}\ p\in \K^n\setminus\{0\}\}.
\end{equation}

\begin{remark}
    By Proposition~\ref{realsingularpoints}, in the real case the singularity of a slowness surface means precisely the existence of a multiple eigenvalue of the Christoffel matrix.
\end{remark}

The aim of this chapter is to prove Theorems~\ref{2dseparate} and~\ref{highdsing}. We start with the two dimensional result.

\subsection{Singularity of Slowness Surfaces in Two Dimensions}
\label{sec:singularity-2D}

The main result of this subsection is Proposition~\ref{prop:2d-slowness-surf-sing}, which characterizes singular stiffness tensors in $\R^2$. Theorem~\ref{2dseparate} then follows as an immediate corollary.  The characterization is in terms of certain semi-algebraic conditions on the stiffness tensor defined by the polynomials
\begin{equation}
\label{eqn:discs-and-resus}
\begin{split}
    D_1(c)
    &=
    (c_{1212} + c_{1122})^2
    -
    4c_{1112}c_{1222},
    \\
    D_2(c)
    &=
    (c_{1112} - c_{1222})^2
    +
    (c_{1111} - c_{1212})
    (c_{2222} - c_{1212}),
    \\
    D(c) &= D_1(c) + D_2(c),
    \\
    L(c)
    &=
    c_{1122}c_{1222}
    +
    c_{1111}c_{1222}
    -
    c_{1112}c_{2222}
    -
    c_{1122}c_{1112},
    \\
    R(c) &= L(c)^2 - D_1(c)D_2(c).
\end{split}
\end{equation}

\begin{proposition}
\label{prop:2d-slowness-surf-sing}
The following are equivalent for a stiffness tensor $c \in E_\R(2)$:
\begin{enumerate}
    \item \label{item:multiple-eigen} The Christoffel matrix has a multiple eigenvalue.
    \item \label{item:d-r-posit} $R(c) = 0$ and $D(c) \geq 0$.
    \item \label{item:d1-r-posit} $R(c) = 0$ and $D_1(c) \geq 0$.
    \item \label{item:d2-r-posit} $R(c) = 0$ and $D_2(c) \geq 0$.
\end{enumerate}
In particular,
\begin{equation}
\Sigma_{\R}(2)
=
\{
c \in E_\R(2)
:
R(c) = 0
\text{ and }
D(c) \geq 0
\}.
\end{equation}
\end{proposition}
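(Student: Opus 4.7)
The plan is to reduce the statement to a classical problem about a pair of real binary quadratic forms. A $2\times 2$ real symmetric matrix has a repeated eigenvalue if and only if it is a scalar multiple of the identity, so I first write the Christoffel matrix $\Gamma_c(p)\in\text{Symm}_\R(2)$ explicitly and observe that condition~\fullref{item:multiple-eigen} is equivalent to the existence of a nonzero real $p$ for which
\begin{equation*}
A(p) := \Gamma_{11}(p)-\Gamma_{22}(p) = 0 \quad\text{and}\quad B(p) := \Gamma_{12}(p) = 0.
\end{equation*}
Both $A$ and $B$ are homogeneous quadratics in $p=(p_1,p_2)$ with coefficients linear in the entries of $c$, so condition~\fullref{item:multiple-eigen} amounts to the existence of a common projective root of $A$ and $B$ over $\R$.

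The next step is to identify the polynomials in~\eqref{eqn:discs-and-resus} with classical invariants of the pair $(A,B)$. Reading off the coefficients of $A$ and $B$, a direct computation shows that, up to a positive normalization, $D_2(c)$ is the discriminant of $A$ and $D_1(c)$ is the discriminant of $B$; hence $D_j(c)\ge 0$ is exactly the condition that the corresponding form has real roots. A slightly longer but mechanical expansion of the $4\times 4$ Sylvester matrix of $A$ and $B$ then identifies $R(c)=L(c)^2-D_1(c)D_2(c)$ with the resultant $\mathrm{Res}(A,B)$ up to sign, so that $R(c)=0$ is equivalent to $A$ and $B$ sharing a projective root over $\C$.

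A short reality argument closes the loop. Assume $R(c)=0$ and let $\alpha$ be a common complex projective root. If $\alpha$ is not real, then $\bar\alpha$ is also a common root by the reality of $A$ and $B$; since a binary quadratic has only two projective roots, $A$ and $B$ then have the same root set and are proportional, so neither has a real root, whence $D_1(c)<0$, $D_2(c)<0$, and $D(c)<0$. Conversely, if any of $D(c)$, $D_1(c)$, or $D_2(c)$ is non-negative, the above dichotomy forces $\alpha$ to be real, yielding~\fullref{item:multiple-eigen}. Together with the easy converse---that a common real root furnishes real roots to both $A$ and $B$, hence $D_1,D_2\ge 0$ and $R=0$---this gives the four-way equivalence and the final description of $\Sigma_\R(2)$.

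The main technical obstacle is verifying the polynomial identity $\mathrm{Res}(A,B)=\pm\bigl(L(c)^2-D_1(c)D_2(c)\bigr)$. Once the Sylvester determinant is expanded in the entries of $c$ and the cross terms are grouped appropriately, the identity should emerge; the bookkeeping is somewhat subtle because of the factor of $2$ in the middle coefficient of $A$ and the sum $c_{1122}+c_{1212}$ appearing as the middle coefficient of $B$, both of which must be threaded through consistently.
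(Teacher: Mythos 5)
Your proposal follows essentially the same route as the paper's proof: both reduce the multiple-eigenvalue condition to the existence of a common nonzero real root of the two quadratic forms $\Gamma_{11}-\Gamma_{22}$ and $\Gamma_{12}$, identify $D_1$ and $D_2$ (up to positive factors) with their discriminants and $R$ with their resultant, and then argue about the reality of the common root. Your conjugate-root/proportionality argument for reality is only a minor variant of the paper's observation that the vanishing resultant forces $D_1$ and $D_2$ to have the same sign, and both treatments leave the resultant computation as a direct expansion and implicitly set aside the degenerate case where one of the two forms vanishes identically, so the proposal matches the paper's proof in substance and in level of rigor.
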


\begin{proof}
The existence of a multiple root of a polynomial $Q$ is equivalent to the vanishing of the discriminant $\Delta(Q)$ of $Q$.
The discriminant of the characteristic polynomial of $H: = \Gamma_c(p)$ is
\begin{equation}
\Delta(\chi_H)
=
(h_{11} - h_{22})^2
+
4h_{12}^2.
\end{equation}
Thus $\Delta(\chi_H) = 0$ can be equivalently written as
\begin{equation}
h_{11} - h_{22} = 0
\quad\text{and}\quad
h_{12} = 0.
\end{equation}
Spelled out we have the system of equations
\begin{equation}
\label{eqn:disc-char-poly2}
c_{1112}p_1^2
+
(c_{1212} + c_{1122})p_1p_2
+
c_{1222}p_2^2
=
0
\end{equation}
and
\begin{equation}
\label{eqn:disc-char-poly1}
(c_{1111} - c_{1212})p_1^2
+
2(c_{1112} - c_{1222})p_1p_2
+
(c_{1212} - c_{2222})p_2^2
=
0.
\end{equation}
Let us denote by $F_1$ and $F_2$ the polynomials in $\R[p_1,p_2]$ defining the equations \eqref{eqn:disc-char-poly1} and \eqref{eqn:disc-char-poly2} respectively.

We will prove that equations \eqref{eqn:disc-char-poly1} and \eqref{eqn:disc-char-poly2} have a common non-zero root $p \in \R^n$ if and only if any one of the conditions~\fullref{item:d-r-posit},~\fullref{item:d1-r-posit} or~\fullref{item:d2-r-posit} are true. This is done by analyzing the discriminants and the resultant of the system of equations. More concretely, we use the resultant of $F_1$ and $F_2$ to study existence of a common root and the discriminants of $F_1$ and $F_2$ to determine whether a common root is real.

Since $F_1$ and $F_2$ are homogeneous our conclusions are the same independently whether we choose to interpret $F_1,F_2 \in (\R[p_1])[p_2]$ and require $p_1 \ne 0$ or $F_1,F_2 \in (\R[p_2])[p_1]$ and require $p_2 \ne 0$ when computing the discriminants and the resultant. We choose to use the first interpretation.

We interpret $F_1(p_1,p_2)$ as a polynomial in the variable $p_2$ with coefficients in $\R[p_1]$. Then the discriminant of $F_1$ is
\begin{equation}
\label{eqn:disc1}
\Delta(F_1)(p_1)
=
[
(c_{1212} + c_{1122})^2
-
4c_{1112}c_{1222}
]
p_1^2
=
D_1(c)p_1^2.
\end{equation}
Similarly, we interpret $F_2$ as a polynomial in the variable $p_2$ with coefficients in $\R[p_1]$ and compute its discriminant to be
\begin{equation}
\label{eqn:disc2}
\Delta(F_2)(p_1)
=
[
4(c_{1112} - c_{1222})^2
+
4
(c_{1111} - c_{1212})(c_{2222} - c_{1212})
]
p_1^2
=
4D_2(c)p_1^2.
\end{equation}
Additionally, with $F_1$ and $F_2$ interpreted in this way their resultant is the polynomial
\begin{equation}
\label{eqn:resultant}
\text{res}(F_1,F_2,p_2)(p_1)
=
(L(c)^2 - D_1(c)D_2(c))p_1^4
=
R(c)p_1^4.
\end{equation}
where $L(c)$ is as in \eqref{eqn:discs-and-resus}.

The proof is now completed by the following reasoning:
\begin{enumerate}
    \item From \eqref{eqn:resultant} we see that the slowness polynomial of $\Gamma_c(p)$ has a double root at $p \in \C^2 \setminus \{0\}$ if and only if $R(c) = 0$.
    \item From \eqref{eqn:disc1} and \eqref{eqn:disc2} we see that $p \in \R^2 \setminus \{0\}$ if and only if $D_1(c) \ge 0$ or $D_2(c) \ge 0$.
    \item From the vanishing of the resultant we see that $D_1(c)$ and $D_2(c)$ have the same sign.
    \item In the case that $D_1(c) \ge 0$ and $D_2(c) \ge 0$ we have $D(c) \ge 0$. In the case that $D_1(c) < 0$ and $D_2(c) < 0$ we have $D(c) < 0$.
    \qedhere
\end{enumerate}
\end{proof}

\begin{proof}[Proof of Theorem~\ref{2dseparate}]
The claim is the same as equivalence of items~\fullref{item:multiple-eigen} and~\fullref{item:d-r-posit} in Proposition~\ref{prop:2d-slowness-surf-sing}.
\end{proof}

\subsection{Singularity of Slowness Surfaces in Higher Dimensions}
\label{sec:singularity-3D}

In this subsection we prove Theorem~\ref{highdsing}. Before going into the details, we have to take a slight detour to discuss a modified notion of parallelizability, which we will refer to as projective parallelizability. 

\subsubsection{Projective Parallelizability}

An $n$-dimensional smooth manifold $M$ is said to be parallelizable if there are $n$ vector fields $V^1,\dots,V^n$ on it so that they form a basis of $T_xM$ at all points $x\in M$.
Such a collection of vector fields corresponds to a trivialization of the tangent bundle.
We will study a weaker form of parallelizability:

\begin{definition}
\label{def:proj-par}
An $n$-dimensional smooth manifold $M$ is said to be \emph{projectively parallelizable} if there are $n$ tangent line bundles $L^1,\dots,L^n$ so that at all points $x\in M$ we have $T_xM=\bigoplus_{i=1}^nL^i_x$.
\end{definition}

A tangent line bundle is a one-dimensional subbundle of the tangent bundle.
If a manifold $M$ is parallelizable, then it is easily seen to be projectively parallelizable by setting $L^i=V^i\R$.
The converse is only partially true.
If each tangent line bundle $L^i$ admits a nowhere vanishing section, then the manifold is parallelizable with said sections as the required vector fields.

This definition arises naturally in our study of non-degenerate matrices.
The tangent line bundles correspond to eigenspaces of certain operators, and eigenspaces being more unique objects than eigenvectors turns our attention to our projective kind of parallelizability.
We prove two results concerning this definition:

\begin{proposition}
\label{prop:proj-par-cover}
Every projectively parallelizable manifold has a parallelizable covering space.
If the original manifold is connected or compact, so is the covering space, respectively.
\end{proposition}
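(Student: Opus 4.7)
The plan is to trivialize the line bundles $L^1,\dots,L^n$ one at a time by passing to a double cover, and then combine these double covers into a single finite cover on which all $L^i$ become simultaneously trivial.

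First, equip $M$ with an arbitrary Riemannian metric and restrict it to each $L^i$. For each $i$, define the orientation double cover $\pi_i\colon\widetilde M_i\to M$ as the unit sphere bundle of $L^i$, whose fiber over $x\in M$ is the two-point set $\{v\in L^i_x : \lvert v\rvert=1\}$. Since $L^i$ is a line bundle, $\pi_i$ is a (possibly disconnected) $2$-fold cover, and there is a tautological nowhere vanishing section of the pullback bundle, namely $v\mapsto v\in(\pi_i^*L^i)_v=L^i_{\pi_i(v)}$. Hence $\pi_i^*L^i$ is a trivial line bundle over $\widetilde M_i$.

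Next, form the $n$-fold fibered product
\begin{equation}
\widetilde M
:=
\widetilde M_1\times_M\widetilde M_2\times_M\cdots\times_M\widetilde M_n,
\end{equation}
with the natural projection $\pi\colon\widetilde M\to M$. This is a finite covering of degree $2^n$, and on it each pullback $\pi^*L^i$ is trivialized by the section inherited from the $i$-th factor; call the resulting nowhere vanishing section $V^i\in\Gamma(\pi^*L^i)$. Because $\pi$ is a local diffeomorphism, $T\widetilde M\cong\pi^*TM$, and the splitting of Definition~\ref{def:proj-par} pulls back to $\pi^*TM=\bigoplus_{i=1}^n\pi^*L^i$. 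Thus $V^1,\dots,V^n$ constitute a global frame of $T\widetilde M$, proving that $\widetilde M$ is parallelizable.

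For the final sentence, suppose $M$ is connected. Then $\widetilde M$ has only finitely many components (at most $2^n$), and each of these maps onto $M$ by the usual argument that $\pi(C)$ and $\pi(\widetilde M\setminus C)$ are both open for any component $C$. The restriction $\pi\rvert_C$ is again a covering map, and $C$ inherits parallelizability from $\widetilde M$ since restricting the frame $V^1,\dots,V^n$ to $C$ still gives a global frame of $TC$; choosing any such component yields a connected parallelizable cover. If $M$ is compact, then $\widetilde M$ is compact as a finite cover of a compact Hausdorff space, and its components are closed (hence compact) in $\widetilde M$, so the chosen connected component is also compact.

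The construction is essentially forced once one observes that line bundles are classified up to triviality by a single $\mathbb Z/2$ class, so no real obstacle arises; the only care needed is to verify that the fibered product is genuinely a covering of $M$ (which follows because fibered products of finite covers are finite covers) and that passing to a single component preserves the covering property together with the frame. Both are routine, and neither connectedness nor compactness of $M$ is used until the very last paragraph.
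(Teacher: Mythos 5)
Your construction is correct and is essentially the same as the paper's: your fibered product $\widetilde M_1\times_M\cdots\times_M\widetilde M_n$ of the unit sphere (orientation) double covers is exactly the space of tuples $(x,v^1,\dots,v^n)$ with each $v^i$ a unit vector in $L^i_x$ that the paper uses, and the tautological sections give the same global frame. The handling of connectedness (pass to a component) and compactness (finite cover of a compact space) also matches the paper's argument.
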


\begin{proposition}
\label{prop:proj-par-sphere}
The sphere $\mathbb{S}^n$ is projectively parallelizable if and only if $n\in\{1,3,7\}$.
\end{proposition}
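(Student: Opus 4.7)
The plan is to reduce projective parallelizability of $\sphere^n$ to ordinary parallelizability and then invoke the classical Bott--Milnor--Kervaire--Adams theorem, which asserts that $\sphere^n$ is parallelizable if and only if $n\in\{1,3,7\}$.

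For the forward implication, I would use directly that in the three exceptional dimensions $\sphere^n$ is parallelizable, as witnessed by the unit complex numbers, unit quaternions, and unit octonions respectively. Given any global frame $V^1,\dots,V^n$ on $T\sphere^n$, the tangent line bundles $L^i:=V^i\R$ are rank-one subbundles of $T\sphere^n$ and pointwise decompose the tangent space as $T_x\sphere^n=\bigoplus_{i=1}^n L^i_x$, which is exactly the condition of Definition~\ref{def:proj-par}.

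For the converse, suppose $\sphere^n$ is projectively parallelizable. The case $n=1$ requires no argument since $\sphere^1$ is already parallelizable, so I would focus on $n\geq 2$. Applying Proposition~\ref{prop:proj-par-cover} to the connected manifold $\sphere^n$ yields a parallelizable connected covering space $\pi\colon \widetilde{M}\to \sphere^n$. Since $\sphere^n$ is simply connected for $n\geq 2$, every connected covering is a diffeomorphism, so $\sphere^n$ itself must be parallelizable. The Bott--Milnor--Kervaire--Adams theorem then forces $n\in\{3,7\}$, completing the reverse direction.

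The main obstacle is that essentially all of the content of the proof is carried by two external ingredients: Proposition~\ref{prop:proj-par-cover} (the previous proposition in the paper) and the deep classification of parallelizable spheres, which is equivalent to the Hopf-invariant-one problem resolved by Adams. Once these are in hand, the argument is a short reduction and no further topological difficulty arises; the only minor care is to separate out the trivial low-dimensional case $n=1$ where simple connectivity fails.
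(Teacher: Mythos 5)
Your argument is correct and follows essentially the same route as the paper: reduce to ordinary parallelizability via Proposition~\ref{prop:proj-par-cover}, note that a connected cover of the simply connected sphere ($n\geq 2$) is a diffeomorphism, and invoke the classical Bott--Milnor--Kervaire--Adams classification of parallelizable spheres. Your explicit treatment of the easy direction and of the case $n=1$ matches remarks already made in the paper, so there is no substantive difference.
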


Proposition~\ref{prop:proj-par-cover} has an immediate generalization we will not need:
It is unimportant that the number of tangent line bundles or vector fields equals the dimension.
The statement remains true for any number $k\in\{1,\dots,n\}$ of these bundles and fields.
For $k=1$ this leads to an ``unoriented hairy ball theorem'', which we record here without proof:

\begin{proposition}
The sphere $\mathbb{S}^n$ has a tangent line bundle if and only if $n$ is odd.
\end{proposition}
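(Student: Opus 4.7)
The plan is to prove the two directions separately, with the classical hairy ball theorem as the key external input. The case $n=0$ is vacuous since $T\mathbb{S}^0$ is zero-dimensional, so I assume $n\geq 1$. For the ``if'' direction, if $n$ is odd I would write $n+1=2k$ and identify $\R^{n+1}\cong\C^k$. Setting $V(z):=iz$ for $z\in\mathbb{S}^n$ yields a tangent vector field, since $\mathrm{Re}\langle iz,z\rangle=0$, that is nowhere vanishing because $\abs{V(z)}=1$. The line subbundle $L_x:=\R\cdot V(x)$ of $T\mathbb{S}^n$ is then the required tangent line bundle.

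For the ``only if'' direction, I would argue by contradiction. Suppose $n\geq 2$ is even and $L\subset T\mathbb{S}^n$ is a tangent line bundle. After equipping $L$ with a Riemannian bundle metric via partition of unity, the unit sphere bundle $S(L)\to\mathbb{S}^n$ is a two-sheeted covering map. Since $\mathbb{S}^n$ is simply connected for $n\geq 2$, this covering must be trivial, so $S(L)$ decomposes into two connected components, each mapping homeomorphically onto $\mathbb{S}^n$. Either component then yields a continuous nowhere-vanishing section of $L$, hence a nowhere-vanishing tangent vector field on $\mathbb{S}^n$, contradicting the hairy ball theorem.

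The main delicate point is exactly this step from a tangent line bundle to a nowhere-vanishing vector field, since a priori $L$ could be non-orientable and thus lack a global nonzero section; the covering-space argument resolves this cleanly and is in the same geometric spirit as Proposition~\ref{prop:proj-par-cover}. Alternatively one can invoke the vanishing of the first Stiefel--Whitney class $w_1(L)$ on the simply connected base $\mathbb{S}^n$, which gives orientability, hence triviality as a real line bundle, and hence a nowhere-vanishing section.
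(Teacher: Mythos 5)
Your proof is correct. The paper records this proposition without proof, and your argument is precisely the intended one: the odd case via the standard nowhere-vanishing field $V(z)=iz$ under $\R^{n+1}\cong\C^k$, and the even case by passing to the unit-sphere double cover of the line bundle --- the $k=1$ instance of the covering construction in Proposition~\ref{prop:proj-par-cover} --- which is trivial over the simply connected $\mathbb{S}^n$ ($n\ge 2$) and hence yields a nowhere-vanishing tangent vector field, contradicting the hairy ball theorem.
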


\begin{proof}[Proof of Proposition~\ref{prop:proj-par-cover}]
Let $M$ be a connected projectively parallelizable space with the tangent line bundles $L^1,\dots,L^n$.
Endow $M$ with any Riemannian metric.

Define the space
\begin{equation}
\begin{split}
N
:=
\{
(x,v^1,\dots,v^n);
x\in M,\ 
v^i(x)\in L^i_x \text{ for all }i,\text{ and}\ 
\vert v^i(x)\vert = 1 \text{ for all }i
\}
.
\end{split}
\end{equation}
This $N$ is a $2^n$-sheeted covering space of $M$ with the canonical projection.

The space $N$ is locally diffeomorphic to $M$ and it has globally defined vector fields $v^i(x)$ obtained by pulling back the vector field on $M$ along the covering mapping. As each tangent space of $M$ is by assumption the direct sum of the tangent lines $L^1,\dots,L^n$, these vector fields provide a basis of each tangent space of $N$.
Therefore $N$ is parallelizable.

This concludes the proof of the main claim.
If $M$ is compact, so is $N$.
If $M$ is connected, any connected component of $N$ will do.
\end{proof}

The proof is analogous to that of an unoriented manifold having an orientable double cover.
The unit vectors on the lines are unique up to sign, and switching any of the signs is a deck transformation of the cover.
With any choice of signs the vector fields are locally well defined and unique, and the point of the space $N$ is to make these fields globally well defined.

\begin{proof}[Proof of Proposition~\ref{prop:proj-par-sphere}]
It is well known that the sphere $\mathbb{S}^n$ is parallelizable if and only if $n\in\{1,3,7\}$.
Thus it remains to prove that for $n\geq2$ the sphere $\sphere^n$ is parallelizable if (and only if) it is projectively parallelizable.
This follows quickly from Proposition~\ref{prop:proj-par-cover}: a projectively parallelizable sphere has a parallelizable cover, but the spheres (for $n\geq2$) are their own universal covers and cannot be covered by anything else, thus proving that the sphere itself is parallelizable.
\end{proof}

\subsubsection{Proof of Theorem~\ref{highdsing}}

Coming back to the main aim of the subsection, we start by proving the Zariski-closedness of $\Sigma_\C(n)$.

\begin{lemma}\label{zclosed}
    The set $\Sigma_{\C}(n)$ is Zariski-closed.
\end{lemma}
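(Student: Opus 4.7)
The plan is to realize $\Sigma_\C(n)$ as the image of a Zariski-closed subvariety of $E_\C(n) \times \mathbb{P}^{n-1}_\C$ under the projection to the first factor, and then conclude closedness from the fundamental theorem of elimination theory (the completeness of projective space in the Zariski topology).

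First I would encode the defining condition ``$P_c$ has a multiple root at some $p \neq 0$'' as a polynomial equation in $(c,p)$ that is homogeneous in $p$. Following the setup of Proposition~\ref{prop:2d-slowness-surf-sing}, this condition is equivalent, for a given $p$, to the discriminant of the characteristic polynomial $\chi_p(\lambda) := \det(\lambda I - \Gamma_c(p))$ (viewed as a polynomial in $\lambda$) vanishing. Denote this discriminant by $D(c,p) := \mathrm{Disc}_\lambda(\chi_p)$. Then $D$ is a polynomial in the components of $c$ and in $p$, and because $\Gamma_c(p)$ is homogeneous of degree $2$ in $p$, each eigenvalue $\lambda_i(p)$ is homogeneous of degree $2$, so $D(c,p) = \prod_{i<j}(\lambda_i(p) - \lambda_j(p))^2$ is homogeneous in $p$ of degree $2n(n-1)$.

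Next I would form the incidence variety
\[
Z := \{\,(c,[p]) \in E_\C(n) \times \mathbb{P}^{n-1}_\C : D(c,p) = 0\,\},
\]
which is well-defined and Zariski-closed precisely because $D(c,p)$ is homogeneous in $p$. By construction, $\pi_1(Z) = \Sigma_\C(n)$, where $\pi_1 : E_\C(n) \times \mathbb{P}^{n-1}_\C \to E_\C(n)$ is the projection to the first factor. The main theorem of elimination theory asserts that $\pi_1$ is a closed map in the Zariski topology (equivalently, $\mathbb{P}^{n-1}_\C$ is a complete variety), whence $\Sigma_\C(n) = \pi_1(Z)$ is Zariski-closed in $E_\C(n)$.

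The only subtle point is the homogeneity of $D(c,p)$ in $p$, which is essential for passing to the projective incidence variety; this is a direct consequence of $\Gamma_c$ being homogeneous of degree two in $p$. The remaining ingredient, closedness of projections from products with complete varieties, is a standard tool of algebraic geometry and presents no further obstacle.
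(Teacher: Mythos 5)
Your proposal is correct and follows essentially the same route as the paper: the paper also forms the incidence set $B=\{(c,p)\in E_{\C}(n)\times\C\mathbb{P}^{n-1}:\Delta(P_c(p))=0\}$, justifies its projective well-definedness by the homogeneity of the discriminant in $p$ (via the eigenvalue product formula), observes $\pi(B)=\Sigma_{\C}(n)$, and concludes by the fundamental theorem of elimination theory. No gaps to report.
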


Due to the homogeneity of $\Delta(P_c)$ in $p$ it suffices for us to investigate the set
\begin{equation}
    B:=\{(c,p)\in E_{\C}(n)\times \C\mathbb{P}^{n-1}:\Delta(P_c(p))=0\}.
\end{equation}

\begin{remark}
    The homogeneity of the discriminant in $p$ can be seen as follows. Since the matrix $\Gamma(c,p)$ is homogeneous in $p$, so are its eigenvalues. Since the discriminant of the characteristic polynomial $\chi_{\Gamma(c,p)}=P_c(p)$ is given by
    \begin{equation}
        \prod_{i<j}(\lambda_i-\lambda_j)^2
    \end{equation}
    where $\lambda_k$, $k\in\{1,\dots,n\}$, are the eigenvalues of $\Gamma(c,p)$,  we see that $\Delta(P_c(p))$ is homogeneous in $p$ as well.
\end{remark}

Let $\pi:B\to E_{\C}(n)$ be the projection
\begin{equation}
    \pi:(c,p)\mapsto c.
\end{equation}
We note the following, which follows directly from the definitions.

\begin{lemma}\label{piBisSigmaP}
    It holds that $\pi(B)=\Sigma_{\C}(n)$.
\end{lemma}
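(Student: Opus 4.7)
The plan is to prove both inclusions directly from the definitions, leveraging the homogeneity of $\Delta(P_c(p))$ in $p$ recorded in the preceding remark. This homogeneity is what makes the condition $\Delta(P_c(p))=0$ well defined on the projective space $\C\mathbb{P}^{n-1}$, so that $B$ is genuinely a subset of $E_{\C}(n)\times\C\mathbb{P}^{n-1}$ rather than just of $E_{\C}(n)\times(\C^n\setminus\{0\})$.

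For $\pi(B)\subseteq\Sigma_{\C}(n)$, I would start with $c\in\pi(B)$ and pick a preimage $(c,[p])\in B$. Any affine lift $p\in\C^n\setminus\{0\}$ satisfies $\Delta(P_c(p))=0$ by the definition of $B$ together with homogeneity. The classical characterization of the discriminant says that $\Delta(P_c(p))=0$ if and only if the characteristic polynomial $P_c(p)$ (a polynomial in the eigenvalue variable $\lambda$) has a repeated root. Hence $c\in\Sigma_{\C}(n)$.

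For the reverse inclusion $\Sigma_{\C}(n)\subseteq\pi(B)$, given $c\in\Sigma_{\C}(n)$ I fix a witness $p\in\C^n\setminus\{0\}$ at which $P_c(p)$ has a multiple root, i.e.\ $\Delta(P_c(p))=0$. Passing to projective coordinates, the class $[p]\in\C\mathbb{P}^{n-1}$ is well defined and the vanishing condition survives, again by homogeneity. Thus $(c,[p])\in B$, and $\pi(c,[p])=c$ shows that $c\in\pi(B)$.

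There is no real conceptual obstacle here; the lemma is a bookkeeping identity repackaging $\Sigma_{\C}(n)$ as the image of the projection from the compact projective bundle $B$. The only point requiring care is to invoke the homogeneity of $\Delta(P_c(\,\bullet\,))$ in both directions so that the passage between affine witnesses and projective points is legitimate. This packaging is what will allow the subsequent arguments (in particular Lemma~\ref{zclosed} and the application of a projection/elimination theorem) to transfer closedness or constructibility from the bundle level down to the base $E_{\C}(n)$, feeding into the proof of Theorem~\ref{highdsing}.
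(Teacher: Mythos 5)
Your argument is correct and matches the paper, which states the lemma ``follows directly from the definitions'' without writing out a proof; your unpacking of the two inclusions via the discriminant characterization of multiple roots and the homogeneity of $\Delta(P_c(\dummy))$ in $p$ is exactly the intended reasoning.
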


With this we are then ready to give the 

\begin{proof}[Proof of Lemma~\ref{zclosed}]
    Since the set $B$ is closed, by Lemma~\ref{piBisSigmaP} it follows by the fundamental theorem of elimination theory~\cite{CLO2015} that the set $\Sigma_\C(n)$ is closed as well.
\end{proof}

We then move on to finding the Euclidean interior point in the real slice $\Sigma_\C(n)\cap\R^N$.

\begin{proposition}\label{interiorpoint}
    For $n\notin\{2,4,8\}$ the real slice $\Sigma_{\C}(n)\cap\R^{N}$ has a Euclidean interior point.
\end{proposition}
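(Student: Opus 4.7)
The plan is to exploit the topological obstruction from Proposition~\ref{prop:proj-par-sphere}: for $n\notin\{2,4,8\}$ the sphere $\sphere^{n-1}$ is not projectively parallelizable. Since $\Sigma_{\R}(n)\subset\Sigma_{\C}(n)\cap\R^{N}$, it suffices to exhibit a Euclidean open set of real stiffness tensors whose Christoffel matrix has a real degeneracy point. I would show that every real positive-definite $c$ in a sufficiently small Euclidean neighborhood of a fixed isotropic tensor $c_0$ with distinct wave speeds $c_P\ne c_S$ has this property, which yields the desired interior point.

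First I would fix such an isotropic $c_0$, so that for every $p\in\sphere^{n-1}$ the matrix $\Gamma_{c_0}(p)$ has the simple eigenvalue $c_P^2\abs{p}^2$ with eigenspace $\R p$ and the $(n{-}1)$-fold eigenvalue $c_S^2\abs{p}^2$ with eigenspace $p^{\perp}$, and proceed by contradiction: assume there exists a real positive-definite $c$ arbitrarily close to $c_0$ for which $\Gamma_{c}(p)$ has $n$ pairwise distinct eigenvalues at every $p\in\sphere^{n-1}$. Since the ordered spectrum depends continuously on $p$ and the eigenlines of a symmetric matrix with simple spectrum depend continuously on the matrix, standard perturbation theory then produces $n$ continuous line subbundles $L^1,\dots,L^n$ of the trivial bundle $\sphere^{n-1}\times\R^n$, labelled so that $L^1$ corresponds to the qP-eigenvalue and $L^2,\dots,L^n$ to the qS-eigenvalues.

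Next I would use that the qP-qS spectral gap of $\Gamma_{c_0}(p)$ is bounded below uniformly on the compact sphere $\sphere^{n-1}$, hence persists under a small perturbation of $c$. Consequently the spectral projector onto the qP-eigenspace and the spectral projector onto the span of the qS-cluster each depend continuously on $c$, uniformly in $p$. Thus, for $c$ close to $c_0$, the line $L^1$ lies in a narrow cone about $\R p$ and the subspace $L^2\oplus\cdots\oplus L^n$ lies in a narrow cone about $p^{\perp}=T_p\sphere^{n-1}$, uniformly in $p$. The linear projection $\pi_p\colon\R^n\to p^{\perp}$ along $\R p$ therefore restricts to a fiberwise isomorphism on $L^2\oplus\cdots\oplus L^n$, and the images $\pi_p(L^i)\subset T_p\sphere^{n-1}$ for $i=2,\dots,n$ yield $n-1$ tangent line subbundles of $T\sphere^{n-1}$ whose fiberwise direct sum is $T_p\sphere^{n-1}$. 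By Definition~\ref{def:proj-par} this makes $\sphere^{n-1}$ projectively parallelizable, contradicting Proposition~\ref{prop:proj-par-sphere} since $n-1\notin\{1,3,7\}$.

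It follows that every real positive-definite $c$ sufficiently close to $c_0$ lies in $\Sigma_{\R}(n)\subset\Sigma_{\C}(n)\cap\R^{N}$; since positive-definiteness is an open condition this provides a Euclidean open subset of $\Sigma_{\C}(n)\cap\R^{N}$ around $c_0$, hence an interior point. The hard part will be the uniform perturbation estimate just described: the individual qS-eigenlines can vary wildly as $c\to c_0$ because the limit eigenspaces are non-unique, so one cannot control them one by one but only collectively via the spectral projector onto the entire qS-cluster, which remains continuous in $c$ thanks to the uniform qP-qS gap.
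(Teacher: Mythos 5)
Your proposal is correct and follows essentially the same route as the paper: perturb an isotropic tensor, assume no degeneracy point to obtain continuous eigenline bundles over $\sphere^{n-1}$, project the qS-lines along $\R p$ onto $p^{\perp}$ to make $\sphere^{n-1}$ projectively parallelizable, and contradict Proposition~\ref{prop:proj-par-sphere}. The only (harmless) difference is technical: you obtain the uniform closeness of the qS-cluster to $p^{\perp}$ via spectral-gap/spectral-projector continuity, whereas the paper packages the same uniformity into Lemma~\ref{Almostradial} ($d(p,E_1(c,p))<\varepsilon$) proved by a compactness covering argument.
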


The proof relies on the following result.

\begin{lemma}\label{Almostradial}
\label{lma:almost-isotropic}
Let $c_0\in E_{\C}(n)\cap\R^N$ be any isotropic stiffness tensor and let $\varepsilon>0$ be any positive number.
There is a Euclidean neighborhood $\Omega\subset E_{\C}(n)\cap\R^N$ of $c_0$ so that the following hold:
\begin{enumerate}
\item
\label{item:non-degen-eigen}
For all $c\in\Omega$ and all $p\in\R^n\setminus\{0\}$ the eigenspace $E_1(c,p)$ of the highest eigenvalue of $\Gamma_c(p)$ is non-degenerate.
\item
\label{item:dist-to-eigen}
For all unit vectors $p\in\R^n$ we have $d(p,E_1(c,p))<\varepsilon$.
\end{enumerate}
\end{lemma}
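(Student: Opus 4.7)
The plan is to compute the Christoffel matrix of the isotropic tensor $c_0$ in closed form, extract its spectral structure and a uniform spectral gap, and then transport both features to a Euclidean neighborhood of $c_0$ via standard perturbation theory for symmetric matrices, using compactness of the unit sphere to obtain uniformity in $p$.

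First, I would write $c_0$ in Lam\'e form $c_{0,ijkl}=\lambda\delta_{ij}\delta_{kl}+\mu(\delta_{ik}\delta_{jl}+\delta_{il}\delta_{jk})$, from which a direct computation gives
\begin{equation}
    \Gamma_{c_0}(p)=(\lambda+\mu)pp^T+\mu\abs{p}^2 I.
\end{equation}
The highest eigenvalue is $(\lambda+2\mu)\abs{p}^2=c_P^2\abs{p}^2$ with one-dimensional eigenspace $\R p$, while the remaining eigenvalue $c_S^2\abs{p}^2=\mu\abs{p}^2$ has multiplicity $n-1$ with eigenspace $p^\perp$. Positive definiteness forces $c_P>c_S$, so on the unit sphere the spectral gap is the positive constant $\delta_0:=c_P^2-c_S^2$, and $E_1(c_0,p)=\R p$ for every $p\in\R^n\mO$. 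Hence both items~\fullref{item:non-degen-eigen} and~\fullref{item:dist-to-eigen} hold trivially at $c=c_0$.

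Since $(c,p)\mapsto\Gamma_c(p)$ is polynomial and homogeneous of degree two in $p$, it suffices to work with $p$ on the compact sphere $\sphere^{n-1}$. By Weyl's inequality, the two largest eigenvalues $\lambda_1(c,p)\ge\lambda_2(c,p)$ of $\Gamma_c(p)$ depend jointly continuously on $(c,p)$, so compactness produces a Euclidean neighborhood $\Omega_0$ of $c_0$ on which $\lambda_1-\lambda_2\ge\delta_0/2$ holds for every $p\in\sphere^{n-1}$, proving~\fullref{item:non-degen-eigen}. On this set the rank-one spectral projector onto $E_1(c,p)$ is given by the contour-integral formula
\begin{equation}
    P_1(c,p)=\frac{1}{2\pi i}\oint_{\gamma}(zI-\Gamma_c(p))^{-1}\,dz,
\end{equation}
where $\gamma$ is a positively oriented loop enclosing only $\lambda_1(c,p)$. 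Using the uniform gap together with a uniform operator-norm bound on $\Gamma_c(p)$ over $\Omega_0\times\sphere^{n-1}$, one can select a single loop that works throughout, so $(c,p)\mapsto P_1(c,p)$ is continuous, hence uniformly continuous on any compact subneighborhood of $\Omega_0$ times the sphere. Since $P_1(c_0,p)p=p$ for every unit $p$, shrinking the neighborhood to some $\Omega$ yields $\abs{P_1(c,p)p-p}<\varepsilon$ uniformly in unit $p$ and $c\in\Omega$, and therefore $d(p,E_1(c,p))\le\abs{p-P_1(c,p)p}<\varepsilon$.

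The main obstacle I anticipate is the uniformity in $p$: a fiberwise application of perturbation theory would only give a neighborhood $\Omega_p$ depending on the direction, not a single $\Omega$ that works for all $p$ simultaneously. This is resolved by homogeneity reducing the problem to the compact sphere $\sphere^{n-1}$, together with the joint continuity of the spectral projector on the set where the top eigenvalue is isolated with a uniform gap.
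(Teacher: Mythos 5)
Your proof is correct and follows essentially the same strategy as the paper: both exploit that at the isotropic tensor the leading eigenvalue of $\Gamma_{c_0}(p)$ is simple with eigenvector $p$, then use continuity in $(c,p)$ together with compactness of $\sphere^{n-1}$ to make the simplicity and the closeness of $E_1(c,p)$ to $\R p$ uniform over a Euclidean neighborhood of $c_0$. The only difference is the technical device --- the paper selects continuous eigenvectors locally via the implicit function theorem and patches with a finite cover and a triangle inequality, whereas you use the explicit Lam\'e spectral decomposition and joint continuity of the Riesz projector; both work, though your claim that a single contour can be chosen from the gap and norm bound alone is more cleanly justified by shrinking $\Omega_0$ so that the top eigenvalue and the rest of the spectrum stay in fixed disjoint intervals around $c_P^2$ and $c_S^2$ (or simply by noting that continuity of the projector is a local property, so no global contour is needed).
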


\begin{proof}
    Choose an isotropic real stiffness tensor $c_0\in E_{\C}(n)\cap\R^N$ and let $p\in\mathbb{S}^{n-1}\subset\R^{n}$ be a real unit vector. Denote by $N(p)$ the outer unit normal field on $\mathbb{S}^{n-1}$. For each $p\in\mathbb{S}^{n-1}$, the outer normal vector $N(p)$ of $\mathbb{S}^{n-1}$, identified with $p$, is an eigenvector of the Christoffel matrix $\Gamma(c_0,p)$ and corresponds to a simple largest eigenvalue $\lambda_1(p)$ of $\Gamma(c_0,p)$.
    
    Since the leading eigenvalue $\lambda_1$ is simple, by the implicit function theorem there is for each $\Tilde{p}\in\mathbb{S}^{n-1}$ a neighborhood $U_{\Tilde{p}}$ of $(c_0,\Tilde{p})$ in $\R^N\times\R^n$ such that in $U_{\Tilde{p}}$ the leading eigenvalue of $\Gamma(c,p)$ remains simple, is continuous and the corresponding eigenvector $Z(c,p)$ can be chosen to depend continuously on $(c,p)$. Hence for any $\varepsilon>0$ we may --- by the compactness of $\mathbb{S}^{n-1}$ --- choose a finite number $M(\varepsilon)\in\mathbb{N}_{\ge 1}$ of points $p_i\in\mathbb{S}^{n-1}$ and some $r>0$ such that
    \begin{itemize}
        \item $\mathbb{S}^{n-1}\subset\bigcup_{i=1}^{M(\varepsilon)}B(p_i;\varepsilon/2)$ and
        \item if $c\in B(c_0;r\varepsilon)$ then for each $i\in\{1,\dots,M(\varepsilon)\}$ we have $\vert Z(c,p)-p_i\vert<\frac{1}{2}\varepsilon$ whenever $p\in B(p_i;\varepsilon/2)$.
    \end{itemize}
    This gives us~\fullref{item:non-degen-eigen}, since we can cover $\mathbb{S}^{n-1}$ with neighborhoods where the leading eigenvalue is simple and continuous.
    
    To deduce~\fullref{item:dist-to-eigen}, we note that by the above if $c\in B(c_0;r\varepsilon)$, then for each $p\in\mathbb{S}^{n-1}$ the Christoffel matrix $\Gamma(c,p)$ has an eigenvector $Z(c,p)\in E_1(c,p)$ such that for some $i\in\{1,\dots,M(\varepsilon)\}$ we get
    \begin{equation}
        \vert Z(c,p)-p\vert\leq \vert p-p_i\vert+\vert Z(c,p)-p_i\vert <\varepsilon.
    \end{equation} 
    Hence $d(p,E_1(c,p))<\varepsilon$ for all $p\in\mathbb{S}^{n-1}$.
\end{proof}

We can now prove Proposition~\ref{interiorpoint}.

\begin{proof}[Proof of Proposition~\ref{interiorpoint}]
   Pick any isotropic $c_0\in E_{\R}(n)$.
Let $\varepsilon=\frac{1}{2}$ and let $\Omega\subset E_{\C}(n)$ be the set given by Lemma~\ref{Almostradial}.
Take any $c\in\Omega$.
We will show that $c\in\Sigma_{\C}(n)$.

Suppose that $c\notin \Sigma_{\C}(n)$.
This means that for all unit vectors $p\in \mathbb{S}^{n-1}\subset\R^n$ the Christoffel matrix $\Gamma(c,p)$ has $n$ positive and distinct eigenvalues.
Let $E_k(c,p)$ denote the eigenspace of the $k$th highest eigenvalue.

The important property of these eigenspaces is that $p\notin E_k(c,p)$ for $k\ge 2$.
To see this, fix $k\ge 2$ and write $p=\sum_ip_i$ so that $p_i\in E_i(c,p)$.
The properties provided by Lemma~\ref{Almostradial} ensure that
\begin{equation}
\sum_{i=2}^{n}p_i^2
=
d(p,E_1(c,p))^2
<\varepsilon^2
=
\frac{1}{4},
\end{equation}
and so $\vert p_k\vert<\frac{1}{2}$.
Any unit vector $v\in E_k(c,p)$ satisfies
\begin{equation}
\vert p\cdot v\vert
=
\vert p_k\vert
<
\frac{1}{2},
\end{equation}
so $p$ cannot be parallel to $v$.
Thus indeed $p\notin E_k(c,p)$.

This allows us to project the radial direction out smoothly.
That is, the projection $\R^n\ni v\mapsto v-(v\cdot p)p\in\R^n$ is injective when restricted to $E_k(c,p)$.
Therefore the space
\begin{equation}
\hat E_k(c,p)
:=
\{
v\mapsto v-(v\cdot p)p
;
v\in E_k(c,p)
\}
\end{equation}
is one-dimensional and orthogonal to $p$.
This happens to $p$, and the space $\hat E_k(c,p)$ depends smoothly on $p$ --- which means that one can locally write a spanning vector of this space as a smooth function of $p$.

This means that the spaces $\hat E_j(c,p)\subset T_pS^{n-1}$, $j\in\{2,\dots,n\}$ are smoothly varying one-dimensional subspaces and thus they make up smooth tangent line bundles of $S^{n-1}$. Since
\begin{equation}
    T_p\mathbb{S}^{n-1}=\bigoplus_{j=2}^n\hat{E}_j(c,p)
\end{equation}
for all $p\in\mathbb{S}^{n-1}$, we get that $\mathbb{S}^{n-1}$ is projectively parallelizable. This is however in contradiction with Proposition~\ref{prop:proj-par-sphere} for $n\in\{2,4,8\}$.
Therefore $c\in \Sigma_{\R}(n)$ as desired.
\end{proof}

Theorem~\ref{highdsing} now follows by combining the above with the following result.

\begin{lemma}\label{realintpointzariskiclosed}
    Let $m\ge 1$ and suppose the real slice $S\cap \R^m$ of a Zariski-closed set $S\subset\C^m$ has a Euclidean interior point. Then $S=\C^m$.
\end{lemma}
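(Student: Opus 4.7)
The plan is to exploit the fact that Zariski-closed sets in $\C^m$ are cut out by polynomials and that a polynomial (real or complex) that vanishes on a Euclidean open subset of $\R^m$ must be the zero polynomial.

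First I would write $S = V(I)$ for some ideal $I \subset \C[x_1,\dots,x_m]$, and let $U \subset S \cap \R^m$ be a nonempty Euclidean open ball, whose existence is the hypothesis. Pick any $f \in I$. By assumption $f$ vanishes on $U$. Splitting $f$ into real and imaginary parts by writing its coefficients as $a_\alpha + i b_\alpha$ with $a_\alpha, b_\alpha \in \R$, we obtain two real polynomials $g, h \in \R[x_1,\dots,x_m]$ such that $f(x) = g(x) + i h(x)$ for all $x \in \R^m$. Since $f \equiv 0$ on $U$, both $g$ and $h$ vanish identically on $U$.

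The next step is the elementary fact that a real polynomial in $m$ variables that vanishes on a nonempty Euclidean open subset of $\R^m$ is the zero polynomial. This is a standard induction on $m$: for $m=1$ a nonzero polynomial has only finitely many roots, and for $m\geq 2$ one freezes all but one coordinate to a value in the open set and applies the one-variable case to the resulting polynomial in the remaining variable, concluding that each coefficient (itself a polynomial in the frozen variables vanishing on a lower-dimensional open set) must vanish. Applying this to $g$ and $h$ gives $g = h = 0$ as real polynomials, hence all coefficients $a_\alpha$ and $b_\alpha$ vanish, so $f$ is the zero polynomial in $\C[x_1,\dots,x_m]$.

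Since this argument applies to every $f \in I$, we conclude $I = \{0\}$ and therefore $S = V(\{0\}) = \C^m$. There is no real obstacle here beyond the basic real-analytic rigidity of polynomials on open sets; the proof is essentially a two-line reduction once one observes that the real and imaginary parts of a complex polynomial are themselves real polynomials.
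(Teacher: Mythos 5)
Your proposal is correct. The overall strategy coincides with the paper's: reduce the statement to showing that every polynomial vanishing on $S$, hence on a Euclidean open subset of $\R^m$, must be the zero polynomial of $\C[x_1,\dots,x_m]$. Where you differ is in how the complex coefficients are handled. You decompose $f$ on real points as $g+ih$ with $g,h\in\R[x_1,\dots,x_m]$ and then invoke (with a correct inductive sketch) the real identity theorem for polynomials, killing all coefficients at once. The paper instead first notes by analyticity that $P\vert_{\R^m}\equiv 0$ and then complexifies one variable at a time, showing successively that $P$ vanishes on $\C\times\R^{m-1}$, $\C^2\times\R^{m-2}$, and so on up to $\C^m$, by expanding in powers of each variable. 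The two mechanisms are equally elementary and of comparable length; your real/imaginary splitting is arguably slightly more direct since it avoids the iterated partial complexification, while the paper's iteration avoids any mention of real and imaginary parts of coefficients. Either argument fully proves the lemma.
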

\begin{proof}
    Suppose $P$ is a polynomial vanishing on $S$. Since $S\cap\R^m$ has a Euclidean interior point, the polynomial $P\vert_{\R^m}$ vanishes on an open set. Hence $P\vert_{\R^{m}}=0$ by analyticity.

    Suppose then that $P\vert_{\R^{m}}=0$. Write
    \begin{equation}
        P(z_1,\dots,z_{m})=\sum_{j=0}^{N}z_1^jQ_j(z_2,\dots,z_N),
    \end{equation}
    where $Q_j$ are polynomials and $N$ denotes the edegree of $P$ in $z_1$. We get for any $(x_2,\dots,x_m)\in\R^{m-1}$ that 
    \begin{equation}
        P(z_1,x_2,\dots,x_m)
    \end{equation}
    is a polynomial in $z_1$ vanishing on the real line and hence the zero polynomial in the first argument. This shows that for each $j$ the polynomial $Q_j$ is a polynomial on $\C^{m-1}$ vanishing on $\R^{m-1}$ and hence that the polynomial $P$ vanishes on $\C\times \R^{m-1}$. Repeating the same argument for each $Q_j$ shows that $Q_j=0$ in $\C\times \R^{m-2}$ and hence that the polynomial $P$ vanishes on $\C^2\times\R^{m-2}$. Iterating this process tells us that the polynomial $P$ vanishes in all of $\C^m$, giving us the claim.
\end{proof}

We are now in the position to prove Theorem~\ref{highdsing}.

\begin{proof}[Proof of Theorem~\ref{highdsing}] Part~\fullref{item:separate-nbhood} follows from Lemma~\ref{Almostradial}.

To see part~\fullref{item:singular-scheme}, we note that by Proposition~\ref{interiorpoint} the set $\Sigma_{\C}(n)$ is a Zariski closed set with a Euclidean interior point in $\Sigma_{\C}(n)\cap \R^N$. Hence by Lemma~\ref{realintpointzariskiclosed}, it holds that $\Sigma_{\C}(n)=\C^N$.
\end{proof}

\section{Applications to Inverse Problems}
\label{sec:inverse-problems}

Moving on to the applications, we start with Theorem~\ref{geodesicxray} and then move on to discuss the travel time problem.

\subsection{Injectivity of the Geodesic X-Ray Transform}
\label{sec:xrt}

The smooth version of Corollary~\ref{geodesicxray} was given in~\cite{IM2023}.
The proof below is a small variation of the proof given therein, with focus on regularity matters to be checked.

\begin{proof}[Proof of Corollary~\ref{geodesicxray}]
If our stiffness tensor $c$ is of class $C^3(M)$ and the qP-branch of $c$ is globally separate, by Theorem~\ref{hregmain} the associated elastic wave geometry is of class $C^3_\infty(TM_0)$. In this regularity the definitions of the geodesic X-Ray transform and the simplicity of a Finsler manifold make sense without change and the Herglotz condition can be stated as in~\cite{IM2023}. The result follows by verifying that the proofs of the key Lemmas 3.1 and 3.2  in~\cite{IM2023} remain valid in this regularity. This is shown in Appendix~\ref{xraytechnicalappendix}.
\end{proof}

\subsection{Travel Time Problem}
\label{sec:travel-time}

We begin by proving a generalization of the low regularity Myers--Steenrod-type result presented in~\cite{matveev2017myers} to the case of a manifold with boundary.

\begin{proposition}[Low Regularity Myers--Steenrod Theorem for Compact Manifolds with Boundary] \label{Low regularity Myers-Steenrod}

Let $(M_1,F_1)$ and $(M_2,F_2)$ be two compact, connected and reversible Finsler manifolds with a smooth boundary, where the Finsler metrics $F_1$ and $F_2$ are of class $C^{k}$ with $k\ge 2$. Suppose $\phi:M_1\to M_2$ is a distance preserving bijection. Then the map $\phi$ is a diffeomorphism of class $C^{k+1}$ and moreover $F_1=\phi^*F_2$.
    
\end{proposition}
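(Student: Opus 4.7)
The plan is to reduce the result to the known boundaryless version of the Myers--Steenrod theorem of Matveev--Troyanov~\cite{matveev2017myers}. The argument will proceed in three stages: first show that $\phi$ necessarily respects the boundary/interior decomposition, then apply the boundaryless result on the open interiors, and finally extend the $C^{k+1}$-regularity of $\phi$ from the interior up to the boundary.

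For the first stage, $\phi$ is automatically a homeomorphism since it is a distance-preserving bijection between compact Hausdorff spaces. To see that $\phi(\partial M_1)=\partial M_2$ and $\phi(\mathrm{int}(M_1))=\mathrm{int}(M_2)$, I use a topological characterization: for sufficiently small $r>0$ metric balls around interior points are homeomorphic to open $n$-disks, whereas metric balls around boundary points are homeomorphic to half-disks $[0,\varepsilon)\times B^{n-1}$. These two local models are topologically distinguishable, so the homeomorphism $\phi$ must send boundary to boundary and interior to interior. At this stage I also verify that the inherited distance on $\mathrm{int}(M_i)$ coincides with the intrinsic Finsler length metric of $\mathrm{int}(M_i)$: since $\partial M_i$ is smooth, any admissible curve in $M_i$ connecting interior points can be perturbed inward with arbitrarily small change in length.

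For the second stage, the restriction $\phi|_{\mathrm{int}(M_1)}\colon \mathrm{int}(M_1)\to\mathrm{int}(M_2)$ is a distance-preserving bijection between connected, boundaryless, reversible Finsler manifolds equipped with $C^k$-metrics. The theorem of~\cite{matveev2017myers} then yields that $\phi|_{\mathrm{int}(M_1)}$ is a $C^{k+1}$-diffeomorphism and that $F_1=(\phi|_{\mathrm{int}(M_1)})^*F_2$ pointwise on $\mathrm{int}(M_1)$. Since $F_1$ is continuous on $TM_1$ and, by continuity of $\phi$ and $F_2$, so is $\phi^*F_2$, the identity $F_1=\phi^*F_2$ extends by continuity to $\partial M_1$.

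For the third stage, choose boundary-adapted local coordinates at a point $p\in\partial M_1$ and corresponding coordinates at $\phi(p)\in\partial M_2$. Differentiating the identity $F_1(x,v)^2=F_2(\phi(x),d\phi(x)v)^2$ twice in the fiber variable $v$, which is a fiberwise smooth operation, yields the fundamental-tensor relation
\begin{equation}
g^{F_1}_{ij}(x,v)=g^{F_2}_{kl}(\phi(x),d\phi(x)v)\,\partial_i\phi^k(x)\,\partial_j\phi^l(x),
\end{equation}
valid on the interior. Positive-definiteness of $g^{F_2}$ together with this algebraic relation allows $d\phi$ to be recovered from the $C^k$-regular fundamental tensors and from $\phi$ itself, and iterating this observation together with the geodesic spray equation (whose coefficients remain $C^{k-1}$ up to the boundary) yields the required $C^{k+1}$-regularity of $\phi$ on the closed collar neighborhood of $\partial M_1$. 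The main obstacle is precisely this last bootstrap: the boundaryless Myers--Steenrod argument relies on charts that lie entirely away from any boundary, and one has to verify that the algebraic and differential relations used there survive the passage to a boundary chart, using only values of $\phi$ and the $F_i$ on the closed half-space.
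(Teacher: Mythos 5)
Your first two stages are fine and essentially match the paper (the paper gets $\phi(\mathrm{int}(M_1))=\mathrm{int}(M_2)$ simply by invariance of domain and then invokes~\cite{matveev2017myers} on the interiors). The genuine gap is your third stage, which is the actual content of the proposition: you do not prove $C^{k+1}$-regularity up to the boundary, you only sketch a bootstrap and then explicitly concede that ``the main obstacle is precisely this last bootstrap.'' As written, the fundamental-tensor relation
\begin{equation}
g^{F_1}_{ij}(x,v)=g^{F_2}_{kl}(\phi(x),d\phi(x)v)\,\partial_i\phi^k(x)\,\partial_j\phi^l(x)
\end{equation}
cannot simply be ``solved for $d\phi$'': $d\phi$ enters nonlinearly inside the argument of $g^{F_2}$ as well as through the congruence factors, and the relation is only available on the interior, where Matveev--Troyanov already gives full regularity; to use it \emph{at} the boundary you would first need $\phi$ to be $C^1$ up to the boundary, which is exactly what is to be shown. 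The same circularity infects your stage-two claim that $F_1=\phi^*F_2$ extends to $\partial M_1$ ``by continuity'': $\phi^*F_2$ on $\partial M_1$ is not even defined until $d\phi$ is known to extend continuously there.

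The paper closes this gap with two ideas you are missing. First, the Finslerian distance $d_i$ induces on each component of $\partial M_i$ the intrinsic distance of the closed Finsler manifold $(\partial M_i,F_i\vert_{\partial M_i})$, so $\phi\vert_{\partial M_1}$ is itself a metric isometry of closed manifolds and~\cite{matveev2017myers} applies directly to give that $\phi\vert_{\partial M_1}$ is a $C^{k+1}$-diffeomorphism. Second, the boundary normal exponential map is a $C^{k-1}$-diffeomorphism on a collar (this is where $k\ge 2$ is used, via Lemma~\ref{boundaryexpregularity}), yielding boundary normal coordinates $p\mapsto(s_i(p),z_i(p))$ given by distance to the boundary and nearest boundary point; since a distance-preserving bijection satisfies $s_1(p)=s_2(\phi(p))$ and $\phi(z_1(p))=z_2(\phi(p))$, in these coordinates $\phi$ is literally $(s,z)\mapsto(s,\phi\vert_{\partial M_1}(z))$, which settles the regularity near the boundary without any tensorial bootstrap. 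The pullback identity then extends over $\partial M_1$ by continuity, legitimately, because $d\phi$ is now known up to the boundary. You would need to supply an argument of this kind (or a genuinely worked-out boundary-chart version of the Matveev--Troyanov scheme) for the proposal to constitute a proof.
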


\begin{remark}
    We need the assumption $k\ge 2$ since our proof relies on the use of boundary normal coordinates, which makes sense only in the case that the geodesic equation is uniquely solvable; a fact we can quarantee to hold only in the case $k\ge 2$.
\end{remark}

We need the following lemma, a straightforward generalization of~\cite[Lemma 3.3]{de2023determination}.

\begin{lemma} \label{boundaryexpregularity}
    Let $(M,F)$ be as defined above. There exists $\varepsilon>0$ such that $\text{exp}^\perp:[0,\varepsilon)\times \partial M\to M$ is a $C^{k-1}$-diffeomorphism onto its image.
\end{lemma}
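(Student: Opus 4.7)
The plan is to present $\text{exp}^\perp$ as a composition of $C^{k-1}$ maps, verify that its differential at every $(0,x_0)\in\{0\}\times\partial M$ is a linear isomorphism, and then upgrade the resulting pointwise local diffeomorphism statement to a uniform one on $[0,\varepsilon)\times\partial M$ using compactness of $\partial M$.

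The preliminary ingredient is the inward-pointing Finslerian unit normal field $\nu:\partial M\to TM$. The cleanest construction under only $C^k$ hypotheses on $F$ goes through duality: let $\xi(x)\in T_x^*M$ be the unique inward-pointing covector annihilating $T_x\partial M$ with $F^*(x,\xi(x))=1$, and set $\nu(x):=\ell_{\frac{1}{2}(F^*)^2}(x,\xi(x))$. The smoothness of $\partial M$, the $C^k$ regularity of $F^*$, and Lemma~\ref{helplemma 1} applied to the dual Finsler function together give $\nu\in C^{k-1}(\partial M;TM)$. The geodesic spray $\mathbb{G}$ has coefficients $G^i$ built from the fundamental tensor of $F$ and its horizontal derivatives, so $G^i\in C^{k-1}$ on $TM\setminus\{0\}$ under our standing hypothesis $F\in C^k$ with $k\ge 2$. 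Standard ODE regularity theory then yields the geodesic flow $\varphi_t(x,y)$ as a $C^{k-1}$ function of its arguments on its domain, and composing gives
\begin{equation}
\text{exp}^\perp(t,x)=\pi\circ\varphi_t(x,\nu(x))
\end{equation}
as a $C^{k-1}$ map on an open neighborhood of $\{0\}\times\partial M$ in $[0,\infty)\times\partial M$, where $\pi\colon TM\to M$ is the bundle projection.

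To analyze the differential at $(0,x_0)$ I would work in a boundary chart where $\partial M$ is locally $\{x^n=0\}$. From the first-order expansion $\text{exp}^\perp(t,x)=x+t\nu(x)+O(t^2)$, the differential at $(0,x_0)$ sends $(dt,v)\in T_0[0,\infty)\oplus T_{x_0}\partial M$ to $dt\cdot \nu(x_0)+v\in T_{x_0}M$. Since $\nu(x_0)$ is $F$-unit and hence transverse to $T_{x_0}\partial M$, this is a linear isomorphism, and the $C^{k-1}$ inverse function theorem (applicable because $k\ge 2$) supplies for each $x_0\in\partial M$ a neighborhood of $(0,x_0)$ on which $\text{exp}^\perp$ is a $C^{k-1}$-diffeomorphism onto its image.

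Finally, compactness of $\partial M$ extracts from these pointwise neighborhoods a uniform $\varepsilon>0$ so that $\text{exp}^\perp$ is a local $C^{k-1}$-diffeomorphism on $[0,\varepsilon)\times\partial M$; global injectivity on this set follows from a standard compactness-plus-contradiction argument, taking distinct sequences $(t_n,x_n)\ne(s_n,y_n)$ with $\text{exp}^\perp(t_n,x_n)=\text{exp}^\perp(s_n,y_n)$ and $\max(t_n,s_n)\to 0$, passing to subsequences converging to $(0,x)$ and $(0,y)$, using continuity and $\text{exp}^\perp(0,\cdot)=\mathrm{id}$ to force $x=y$, and contradicting the local injectivity near $(0,x)$. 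The main obstacle is careful bookkeeping of the regularity drops ($F\rightsquigarrow F^*\rightsquigarrow\nu$ and $F\rightsquigarrow g\rightsquigarrow G^i\rightsquigarrow\varphi$); the drop is one derivative at each step and the margin $k\ge 2$ is exactly what is needed to apply the $C^{k-1}$ inverse function theorem, so the remainder is a faithful adaptation of the proof of~\cite[Lemma 3.3]{de2023determination}.
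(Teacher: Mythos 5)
Your argument is correct and takes essentially the same route as the paper: write $\exp^\perp(t,x)=\pi(\varphi_t(\nu(x)))$, deduce $C^{k-1}$ regularity from ODE theory applied to the $C^{k-1}$ geodesic spray, and finish with the inverse function theorem plus a compactness argument giving a uniform $\varepsilon$ and global injectivity. The one detail worth adding is the paper's passage to a $C^k$-extension $(\tilde{M},\tilde{F})$ of $(M,F)$, which is what justifies applying the inverse function theorem on two-sided neighborhoods of the points $(0,x_0)$ lying on the boundary of your parameter domain $[0,\infty)\times\partial M$ (and defines the flow there); also, your construction of $\nu$ via the dual Finsler function and the Legendre transform yields $C^{k-1}$ regularity where the paper gets $\nu_{\mathrm{in}}\in C^k$ directly from the implicit function theorem, but the weaker regularity suffices for the conclusion.
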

With this we move on to give the

\begin{proof}[Proof of Lemma~\ref{boundaryexpregularity}.]
    The map $\exp^\perp$ can be written as
    \begin{equation}
        \exp^{\perp}(s,z)=\pi(\phi_s(\nu_{\text{in}}(z)),
    \end{equation}
    where $\pi:TM\to M$ is the bundle map and $\phi$ is the geodesic flow of a $C^k$-extension $(\Tilde{M},\Tilde{F})$ of $(M,F)$. By implicit function theorem the field $\nu_{\text{in}}$ is of class $C^k$ and the geodesic flow is of class $C^{k-1}$ and hence $\text{exp}^\perp$ is of class $C^{k-1}$. The rest of the claim follows by the inverse function theorem and compactness as in~\cite{dHILS2023}.
\end{proof}

\begin{proof} [Proof of Proposition~\ref{Low regularity Myers-Steenrod}] To see that $\phi$ and $\phi^{-1}$ are of class $C^{k+1}$, we begin by noting that by the invariance of domain $\phi(\text{int}(M_1))=\text{int}(M_2)$. By~\cite{matveev2017myers} the claim holds for $\phi\vert_{\text{int}(M_1)}$. It therefore suffices to establish regularity near the boundary.

For $i\in\{1,2\}$ the Finslerian distance $d_i$ induces an intrinsic distance on each connected component of $\partial M_i$ that agrees with the Finslerian distance $\rho_i$ of the closed manifold $(\partial M_i,F_i\vert_{\partial M_i})$. Hence the restriction $\phi\vert_{\partial M_1}:\partial M_1\to \partial M_2$ is a metric isometry and thus by~\cite{matveev2017myers} a diffeomorphism of class $C^{k+1}$.

By Lemma~\ref{boundaryexpregularity} we have that for some $\varepsilon>0$ the boundary normal exponential map $\exp^{\perp}$ is a $C^{k-1}([0,\varepsilon)\times\partial M_1)$-diffeomorphism onto its image. This lets us define for $i\in \{1,2\}$ the boundary normal coordinates $\varphi$ as $\varphi_i(p):=(\text{exp}^\perp)^{-1}(p)=(s_i(p),z_i(p))$, where $s_i(p)=d(p,z_i(p))$ and $z_i(p)$ is the boundary point closest to $p$.

By the regularity of the restriction map $\phi\vert_{\partial M_1}$ the map
\begin{equation}
    [0,\varepsilon)\times \partial M_1\ni(s,z)\mapsto (s,\phi(z))\in [0,\varepsilon)\times \partial M_2
\end{equation}
is of class $C^{k+1}$. For each $p$ in the domain of $\varphi_1$ the fact that $\phi$ is a metric isometry implies that
\begin{equation}
    s_1(p)=s_2(\phi(p))\quad \text{and}\quad \phi(z_1(p))=z_2(\phi(p)).
\end{equation}
Hence
\begin{equation}
    \varphi_2\circ\phi\circ\varphi_1^{-1}(s,z)=(s,\phi(z)),
\end{equation}
showing that $\phi$ is of class $C^{k+1}$ near the boundary. The regularity claim follows for $\phi^{-1}$ by reversing the roles for $M_1$ and $M_2$.

The pullback property holds for $\phi\vert_{\partial M_1}$ by~\cite{matveev2017myers} and since $\phi$ is a diffeomorphism of the whole manifold with boundary, the continuity of the metrics imply the pullback property over the boundary as well.
\end{proof}

\begin{proof}[Proof of Corollary~\ref{traveltime}]
Follows directly from Proposition~\ref{Low regularity Myers-Steenrod}.
\end{proof}

\section{Detailed Analysis of Singular Points}
\label{sec:singular}

This section contains more detailed descriptions of varieties and schemes, and their smoothness.
We also prove Propositions~\ref{realsingularpoints} and~\ref{complexsingularpoints}.

\subsection{Ideals, polynomials, and radicals}

All our algebra will take place within the polynomial ring $R=\K[x_1,\dots,x_n]$ in $n$ real or complex variables ($\K=\R$ or $\K=\C$).
An ideal of this ring is a set $I\subset R$ such that $0 \in I$ and for which
$p+q\in I$ whenever $p\in I$ and $q\in I$
and also
$pq\in I$ whenever $p\in I$ and $q\in R$.

The ideal $\generate{S}$ generated by any subset $S\subset R$ is the smallest ideal containing $S$ and consists of all finite $R$-linear combinations of elements in $S$.
Our ring $R$ is Noetherian, so every ideal is generated by a finite set.

The radical $\sqrt{I}$ of an ideal $I$ consists of all $p\in R$ for which $p^n\in I$ for some $n\geq1$.
This $\sqrt{I}$ is also an ideal, and the ideal $I$ is said to be radical if $I=\sqrt{I}$.
The radical of an ideal is a radical ideal.

The polynomial ring $R$ is a unique factorization domain, so any single polynomial $p\in R$ can be factored (essentially) uniquely as a product $p=q_1^{m_1}\cdots q_k^{m_k}$ of different irreducible polynomials $q_i$ raised to some powers $m_i$.
The radical $\rad(p)$ of $p$ is $q_1\cdots q_k$.
The radical is only defined up to multiplication by non-zero constant, but this makes no difference for us.
For an ideal generated by a single polynomial we have $\sqrt{\generate{f}}=\generate{\rad(f)}$.

We say that a polynomial $f\in R$ is squarefree if it is not divisible by $p^2$ for any non-constant $p\in R$.
The ideal generated by $f$ is radical if and only if $f$ is squarefree.
Therefore the squarefreeness of the slowness polynomial plays a role in the regularity of the slowness surface.

\subsection{Smoothness of varieties and schemes}

The Jacobian criterion can be used as the definition of smoothness of a scheme over a field $\K \in \{\R,\C\}$. Next, we discuss how the general definition reduces to the one given in Definition~\ref{def:types-of-singularities}.

\begin{definition}[{\cite[Definition 13.2.4]{Vakil2025}}]
\label{def:scheme-singularity}
Let $\K \in \{\R,\C\}$. A $\K$-scheme is $\K$-smooth of dimension $d$, if it is of pure dimension $d$, and there exists a cover by affine open sets 
\begin{equation}
\Spec(\K[x_1,\dots,x_n]/\langle f_1,\dots,f_m\rangle)
\end{equation}
where the Jacobian matrix of the vector field $F = (f_1,\dots,f_m)$ has corank $d$ at all points.
\end{definition}

Pure dimension $d$ means that the irreducible components of the scheme are all of equal dimension $d$. All slowness surfaces in $\R^n$ are of pure dimension $n-1$.

Let $P \in \K[x_1,\dots,x_n]$ be a slowness polynomial. The corresponding slowness surface viewed scheme theoretically is the affine scheme
\begin{equation}
X = \Spec(\K[x_1,\dots,x_n]/\langle P \rangle).
\end{equation}
An obvious affine open cover of $X$ is formed by $X$ itself. It turns out that checking smoothness according to Definition~\ref{def:scheme-singularity} can be performed for any affine open cover and the conclusion will be the same for all other open covers (see~\cite[Section 13.1.7]{Vakil2025}). We choose to always use the trivial cover.

The underlying set of the scheme $X$ is the prime spectrum of the quotient ring i.e. the collection of all of its prime ideals.
Since the scheme theoretic definition of a slowness surface only involves a single polynomial the Jacobian matrix in the definition  reduces to the gradient of the slowness polynomial $P$. The corank of the gradient is $n-1$ at $p \in X$ if and only if the gradient does not vanish at the point $p$ on the slowness surface. Hence smoothness of the slowness surface as a scheme reduces exactly to the two conditions stated in Definition~\ref{def:types-of-singularities}: the vanishing of $P$ controls that we are looking at points on the surface and the additional vanishing of $\nabla P$ controls the regularity.

Consider any (non-contant) polynomial $f \in \K[x_1,\dots,x_n]$. Then the the polynomials $f$ and $f^2$ determine the same algebraic variety and hence the concept of smoothness as a variety is the same for both sets. However, the scheme determined by $f^2$, formally
\begin{equation}
\Spec(\K[x_1,\dots,x_n]/\langle f^2 \rangle),
\end{equation}
is always singular at all points, which simple to see: when $f^2 = 0$ then also $2f\nabla f = 0$. If $f$ is square free, then $\rad(f^2) = f$ and the corresponding scheme is smooth. Hence $\{f^2 = 0\}$ is smooth as a variety according to Definition~\ref{def:types-of-singularities}.

We end this section by considering how the general definition for smoothness of an algebraic variety reduces to the one we have given in Definition~\ref{def:types-of-singularities}.

\begin{definition}[{\cite[Chapter 6]{CLO2015}}]
Let $p$ be a point on an affine variety $V$. Then $p$ is said to be smooth provided $\dim T_pV = \dim_p V$. An affine variety is smooth if all of its points are smooth.
\end{definition}

There are two notions in the definition which need to be explained. The first is the dimension of a variety $V$ at a point $p$, denoted by $\dim_pV$, which simply means the highest dimension of an irreducible component of $V$ containing $p$.

The second notion to be explained is the tangent space $T_pV$. The tangent space of an affine variety $V$ at $p \in V$ is defined in pure algebraic terms by the vanishing of the linear part of the defining polynomials of the variety (see~\cite[Chapter 6]{CLO2015}). If $f \in \K[x_1,\dots,x_n]$ is a polynomial then its linear part at $p$, denoted by $d_pf$, is by definition
\begin{equation}
d_pf = \sum_{i=1}^n \frac{\partial f}{\partial x_i}(p)(x_i - p_i).
\end{equation}
The tangent space at $p$ is then the affine variety
\begin{equation}
T_pV
=
V(d_pf \,:\, f \in I(V))
\end{equation}
where $I(V)$ is the ideal in $\K[x_1,\dots,x_n]$ consisting of all polynomials vanishing on the set $V$. Since a polynomial ring over a field ($\K = \R$) is always a Noetherian ring, the ideal $I(V)$ is finitely generated i.e. there are finitely many polynomials $g_i$ so that $I(V) = \langle g_1,\dots,g_m \rangle$ and hence $T_pV = V(d_pg_1,\dots,d_pg_m)$.

It is a fact that the ideal $I(V)$ is always radical: if $f^n \in I(V)$ for some $n \in \N$ then $f \in I(V)$. In the context of slowness surfaces this has the following interpretation. A slowness surface as a variety is the vanishing set $V = V(P)$ of the slowness polynomial $P$. Then $I(V) = \sqrt{\langle P\rangle} = \langle\rad(P)\rangle$. A point $p$ on the slowness surface can be singular if and only if
\begin{equation}
\dim T_pV > \dim_pV
\end{equation}
which can only happen if the gradient of the polynomial $\rad(P)$ vanishes at $p$ (see~\cite[Chapter 6]{CLO2015}). In other words, the slowness surface (as a variety) can be singular if and only if the vanishing set of the radical $\rad(P)$ is not smooth as a scheme, exactly as we have defined in Definition~\ref{def:types-of-singularities}.

\subsection{Interpretation of Singular Points}
\label{sec:interpretation-sing-pt}

We give here the proofs of Propositions~\ref{realsingularpoints} and~\ref{complexsingularpoints}.

\begin{proof}[Proof of Proposition~\ref{realsingularpoints}]
     \fullref{item:real-scheme-to-variety} Suppose $\Sigma$ is smooth as a scheme. Then $\nabla P_c\neq 0$ and hence the set $\Sigma=V(P_c)$ is smooth as a manifold. By the claim \fullref{item:real-manifold-variety} this implies smoothness as a variety.
     
     \fullref{item:real-square-free} By squarefreeness $\rad(P_c) = P_c$. Thus the definitions are equivalent.
    
    \fullref{item:real-manifold-variety} Suppose $\Sigma$ is smooth as a variety. Then, since $\Sigma$ is the level set of a smooth function with a nonvanishing gradient, it is smooth as a manifold. 
    
    Suppose then that $\Sigma$ is smooth as a manifold. Since all slowness surfaces in $\R^n$ are of pure dimension $n-1$, we have that $\mathrm{dim}(\Sigma)=n-1$. Hence for each $p\in\Sigma$ we have $\mathrm{dim}(T_p\Sigma)=n-1$. Therefore, identifying $T_p\Sigma$ with the tangent plane to $\Sigma$ at $p$ in $\R^n$, we get
    \begin{equation}
        \mathrm{dim}(V(d_pP_c))=\mathrm{dim}(T_p\Sigma)=\mathrm{dim}_p\Sigma,
    \end{equation} 
    telling us that $\Sigma$ is smooth as a variety.
    
    \fullref{item:real-scheme-degen-points} Suppose $\K=\R$. Let $p$ be a multiple root of $P_c$. Let $v\in\mathbb{S}^{n-1}$ and consider the polynomial $P_{c,v}(s):=P_c(p+sv)$. Since $s=0$ is a multiple root of $P_{c,v}$ we get that
    \begin{equation}
        \frac{d}{ds}P_{c,v}(s)\vert_{s=0}=v\cdot \nabla P_c(p)=0.
    \end{equation}
    Since $v$ was arbitrary, this means that $\nabla P_c(p)=0$.

    Suppose then that $\nabla P_c(p)=0$. Then for any $v\in\mathbb{S}^{n-1}$ we have
    \begin{equation}
        \frac{d}{ds}P_{c,v}(s)\vert_{s=0}=v\cdot\nabla P_c(p)=0
    \end{equation}
    and since moreover $P_{c,v}(0)=0$, we have that $s=0$ is a multiple root of $P_{c,v}$, which then in turn implies that $p$ is a multiple root of $P_c$. 

    Since a real slowness polynomial $P_c$ has a multiple root at $p$ if and only if the Christoffel matrix has a degenerate eigenvalue at $p$, the claim follows. 
\end{proof}
The proof of the complex version is essentially the same.

\begin{proof}[Proof of Proposition~\ref{complexsingularpoints}]
The proof for claim~\fullref{item:complex-scheme-to-variety} is the same as in the real case.

For claim~\fullref{item:complex-square-free} we amend the proof as follows.
If $P_c$ is squarefree, the proof is as in the real case above.
If $P_c$ is not squarefree, it is divisible by $q^2$ for some non-constant polynomial $q$.
The polynomial $q$ has a complex zero, and it will be a non-smooth one for $P_c$.

To prove claim~\fullref{item:complex-manifold-variety}, suppose first that $\Sigma$ is smooth as a variety. Then there is a polynomial $Q$ such that $\Sigma=Q^{-1}(0)\subset\C^n$ and $dQ\vert_{\Sigma}\neq 0$. This gives us the first implication, since preimages of regular values of holomorphic functions between complex manifolds are complex submanifolds. For details we refer to~\cite{lee2024introduction}. The second implication follows from the same argument as in the real case by replacing the field to $\C$ and dimension to complex dimension.
\end{proof}

\appendix

\section{Properties of Anisotropic Function Spaces}
\label{app:anisotropic-spaces}

Here we give the proofs of Lemmas~\ref{commutingderivatives},~\ref{Anisotropicinverse} and~\ref{anisotropicimplicit}.

\subsection{Proof of Lemma~\ref{commutingderivatives}}
Suppose $\vert\alpha\vert=\vert\beta\vert=1$. Let $\varphi\in C^\infty_0(V\times U;\R)$. Integrating by parts and the smoothness of $\varphi$ gives us
    \begin{equation}
        \begin{split}
            \int_{V\times U}\partial_{v_i}\partial_{u_j}f\varphi\ dx&=\int_{V\times U}f\partial_{u_j}\partial_{v_i}\varphi\ dx \\
            &=\int_{V\times U}f \partial_{v_i}\partial_{u_j}\varphi\ dx \\
            &=\int_{V\times U}\partial_{u_j}\partial_{v_i}f\varphi\ dx,
        \end{split}
    \end{equation}
    implying that
    \begin{equation}
        \int_{V\times U}(\partial_{v_i}\partial_{u_j}f-\partial_{u_j}\partial_{v_i}f)\varphi\ dx=0
    \end{equation}
    for all $\varphi\in C^\infty_0(V\times U;\R)$. Since we assume both $\partial_{v_i}\partial_{u_j}f$ and $\partial_{u_j}\partial_{v_i}f$ to exist continuously, by the fundamental lemma of the calculus of variations we obtain that 
    \begin{equation}
        \partial_{v_i}\partial_{u_j}f=\partial_{u_j}\partial_{v_i}f
    \end{equation}
    in $V\times U$ for all indices $i$ and $j$.

    Suppose then that $\partial_v^\alpha\partial_{u_j}f=\partial_{u_j}\partial_v^\alpha f$ for all $j\in \{1,\dots,m\}$ and for all $\alpha$ with $\vert\alpha\vert\leq r$, where we fix $r\leq \min\{k-1,l-2\}$. Fix $\vert\alpha\vert=r$. We then claim that $\partial_{v_i}\partial_v^\alpha \partial_{u_j}f=\partial_{u_j}\partial_{v_i}\partial_v^\alpha f$ for all $i\in\{1,\dots,n\}$. Indeed, by assumption
    \begin{equation}
        \partial_{v_i}\partial_v^\alpha\partial_{u_j}f=\partial_{v_i}\partial_{u_j}\partial_v^\alpha f
    \end{equation}
    and since both $\partial_{v_i}\partial_{u_j}\partial_v^\alpha f$ and $\partial_{u_j}\partial_{v_i}\partial_v^\alpha f$ exist continuously by assumption, the result in the case $\vert\alpha\vert=\vert\beta\vert=1$ implies that
    \begin{equation}
        \partial_{v_i}\partial_{u_j}\partial_v^\alpha f=\partial_{u_j}\partial_{v_i}\partial_v^\alpha f,
    \end{equation}
    giving us that 
    \begin{equation}
        \partial_v^\alpha\partial_{u_j}f=\partial_{u_j}\partial_v^\alpha f
    \end{equation}
    for $\vert\alpha\vert \leq r+1$. By induction we get that
    \begin{equation}
        \partial_v^\alpha\partial_{u_j}f=\partial_{u_j}\partial_v^\alpha f
    \end{equation}
    for all $\alpha$ with $\vert\alpha\vert\leq k$ and $\vert\alpha\vert+1\leq l$.

    We proceed by induction on $\vert\beta\vert$. Using the previous claim as a base case, suppose then that
    \begin{equation}
        \partial_v^\alpha\partial_u^\beta f=\partial_u^\beta\partial_v^\alpha f
    \end{equation}
    for all $\alpha$ and $\beta$ with $\vert\alpha\vert\leq k$ and $\vert \beta\vert\leq s$, where we fix $s\leq l-\vert\alpha\vert-1$. Fix $\vert\beta\vert=2$ and $\vert\alpha\vert\leq k$. We then claim that for any $j\in \{1,\dots,m\}$ we have that
    \begin{equation}
        \partial_v^\alpha\partial_{u_j}\partial_u^\beta f=\partial_{u_j}\partial_u^\beta\partial_v^\alpha f.
    \end{equation}
    Indeed, by assumption
    \begin{equation}
        \partial_{u_j}\partial_u^\beta\partial_v^\alpha f=\partial_{u_j}\partial_v^\alpha\partial_u^\beta f
    \end{equation}
    and by substituting $\partial_u^\beta f$ for $f$ in the result for $\vert\alpha\vert\leq k$ and $\vert\beta\vert=1$ we have that
    \begin{equation}
        \partial_{u_j}\partial_v^\alpha\partial_u^\beta f=\partial_v^\alpha\partial_{u_j}\partial_u^\beta f,
    \end{equation}
    giving us the claim. This finishes the proof by induction on $\vert\beta\vert$.

\subsection{Proof of Lemma~\ref{Anisotropicinverse}}

Define the map $F:V\times U\to\R^{m+n}$,
    \begin{equation}
        F:(v,u)\mapsto (v,f(v,u)).
    \end{equation}
    Then
    \begin{equation}
        JF(v,u)=\begin{bmatrix}
            \text{Id}_{m\times m} & 0 \\
            * & J_uf(v,u)
        \end{bmatrix},
    \end{equation}
    so that
    \begin{equation}
        \text{det}(JF)(v,u)=\text{det}(J_uf)(v,u)\neq 0
    \end{equation}
    for all $(v,u)\in V\times U$ by the fact that $f(v,\dummy)$ is a diffeomorphism for all $v\in V$. Since $F\in C^k(V\times U)$, we have by the inverse function theorem that $F^{-1}\in C^k(V\times f(U))$ and hence that $f^{-1}\in C^k(V\times f(U))$. 

    We continue by showing that $f^{-1}\in C_\ell(V\times f(U))$. The case $\ell=1$ is already established, so suppose $\ell=2$. Using the continuous differentiability of $f^{-1}$ lets us write
    \begin{equation}
        J_yf^{-1}(v,y)=(G\circ J_uf\circ F^{-1})(v,y),
    \end{equation}
    where $G:GL_n(\R)\to GL_n(\R)$ is the map 
    \begin{equation}
        G:A\mapsto A^{-1}.
    \end{equation}
    The map $G$ is $C^\infty$-smooth as a function of the matrix elements and hence since $F^{-1}$ is continuously differentiable with respect to $y$ and $J_uf$ is once continuously differentiable with respect to $u$, we get that $J_yf^{-1}\in C_{1}(V\times f(U))$ and hence that $f^{-1}\in C_{2}(V\times f(U))$. 
    
    Suppose then that $\ell\ge 3$. By the previous arguments we have that $f^{-1}\in C_2(V\times f(U))$. Suppose that $f^{-1}\in C_r(V\times f(U))$ for some $r$ with $2\leq r\leq \ell-1$. Then by the smoothness of $G$ and the fact that $J_uf\in C_{\ell-1}(V\times U)$ we get for all $\nu$ with $\vert\nu\vert=r-1$ that
    \begin{equation}
        \partial_y^{\nu}J_yf^{-1}=\partial_y^\nu(G\circ J_uf\circ F^{-1})
    \end{equation}
    is in $C_1(V\times U)$, showing that $J_yf\in C_r(V\times f(U))$ and thus that $f^{-1}\in C_{r+1}(V\times f(U))$. By induction we thus have that $f^{-1}\in C_\ell(V\times f(U))$.

    To establish that $f^{-1}\in C^k(V\times f(U))$, we begin by using the continuous differentiability of $f^{-1}$ to compute
    \begin{equation}
        \begin{split}
            0&=\partial_{v_i}y=\partial_{v_i}f(v,f^{-1}(v,y))\\
            &=(f_{v_i}\circ F^{-1})(v,y)+(J_uf\circ F^{-1})(v,y)\partial_{v_i}f^{-1}(v,y),
        \end{split}
    \end{equation}
    giving us the formula
    \begin{equation}
        \partial_{v_i}f^{-1}(v,y)=-(G\circ J_uf\circ F^{-1})(v,y)(f_{v_i}\circ F^{-1})(v,y).
    \end{equation}
    Here we see that for any fixed $s\leq k-1$ if $f^{-1}\in C^{s-1}(V\times f(U))$, then by the smoothness of $G$, the fact that $J_uf\in C^{\min\{k,\ell-1\}}(V\times U)\cap C_{\ell-1}(V\times U)$ and $f_{v_i}\in C^{k-1}(V\times U)\cap C_{\ell-1}(V\times U)$ we have that $\partial_{v_i}f^{-1}\in C^{s-1}(V\times f(U))$ and hence that $f^{-1}\in C^s(V\times f(U))$. By induction we get that $f^{-1}\in C^k(V\times f(U))$, which then lets us conclude that $f^{-1}\in C^k(V\times f(U))\cap C_\ell(V\times f(U))$.

    To verify that $\partial_y^\alpha f^{-1}\in C^{\min\{k,\ell-\vert\alpha\vert\}}(V\times f(U))$ for all $\alpha$ with $\vert\alpha\vert\leq \ell$, we note that the case $\ell=1$ is degenerate and hence start with the case $\ell=2$. We do this explicitly and treat the case $\ell\ge 3$ separately. Recall that
    \begin{equation}
        J_yf^{-1}=(G\circ J_uf\circ F^{-1}).
    \end{equation}
    Hence since $f^{-1}\in C^k(V\times f(U))$, by the smoothness of $G$ and the fact that $J_uf\in C^{1}(V\times U)\cap C_{1}(V\times U)$ we have that $J_yf^{-1}\in C^{1}(V\times f(U))$, thus giving us the case $\ell=2$.

    Let then $\ell\ge 3$. By the above we know that $\partial_y^\alpha f^{-1}\in C^{\min\{k,\ell-\vert\alpha\vert\}}(V\times f(U))$ for $\alpha$ with $\vert\alpha\vert\leq 2$. Suppose $\partial_y^\alpha f^{-1}\in C^{\min\{k,\ell-\vert\alpha\vert\}}(V\times f(U))$ for some $\alpha$ with $\vert\alpha\vert\leq \ell-1$. Let $\nu$ be such that $\vert\nu\vert=\vert\alpha\vert-1$ and let $i\in\{1,\dots,n\}$. Now
    \begin{equation}
        \partial_{y_i}\partial_y^\nu J_yf^{-1}=\partial_{y_i}\partial_y^\nu(G\circ J_u\circ F^{-1})
    \end{equation}
    and since $G$ is smooth, $\partial_u^\beta J_u\in C^{\min\{k,\ell-\vert\alpha\vert\}}(V\times U)\cap C_{\ell-\vert\alpha\vert}(V\times U)$ for $\beta$ with $\vert\beta\vert=\vert\alpha\vert-1$ and $F^{-1}\in C^{\min\{k,\ell-\vert\alpha\vert\}}(V\times f(U))$, we get that $\partial_{y_i}\partial_y^\nu J_yf^{-1}\in C^{\min\{k,\ell-1-\vert\nu\vert-1\}}(V\times f(U))=C^{\min\{k,\ell-\vert\alpha\vert-1\}}(V\times f(U))$. By induction on $\vert\alpha\vert$ we get that $\partial_y^\alpha f^{-1}\in C^{\min\{k,\ell-\vert\alpha\vert\}}(V\times f(U))$ for all $\alpha$ with $\vert\alpha\vert\leq \ell$.

    To verify that $\partial_v^\alpha f^{-1}\in C_{\ell-\vert\alpha\vert}(V\times f(U))$ for all $\alpha$ with $\vert \alpha\vert\leq k$, we again note that the case $\ell=1$ is degenerate and thus directly move on to the case $\ell\ge 2$. Let $i\in\{1,\dots,n\}$ and write
    \begin{equation}
        \partial_{v_i}f^{-1}=(G\circ J_uf\circ F^{-1})(f_{v_i}\circ F^{-1}).
    \end{equation}
    Since $G$ is smooth, $ J_uf\in C_{\ell-1}(V\times U)$ for $\beta$, $f_{v_i}\in C_{\ell-1}(V\times U)$ and $F^{-1}\in C_\ell(V\times f(U))$, we have that $\partial_{v_i}f^{-1}\in C_{\ell-1}(V\times f(U))$. If $\ell=2$ we are done. If $\ell\ge 3$, suppose $\partial_v^\alpha f^{-1}\in C_{\ell-\vert\alpha\vert}(V\times f(U))$ for all $\alpha$ with $\vert\alpha\vert\leq r\leq \ell-1$. Let $i\in \{1,\dots,n\}$. We can rewrite
    \begin{equation}
        \partial_{v_i}\partial_v^\alpha f^{-1}=\partial_{v_i}\partial_v^\nu\partial_{v_j}f^{-1}=-\partial_{v_i}\partial_v^\nu\left[(G\circ J_uf\circ F^{-1})(f_{v_i}\circ F^{-1})\right]
    \end{equation}
    for some $\nu$ with $\vert\nu\vert=\vert\alpha\vert-1$ and $j\in\{1,\dots,n\}$. Since $G$ is smooth, $\partial_{v_i}\partial_v^\nu J_u\in C_{\ell-\vert\nu\vert-1}(V\times U)=C_{\ell-\vert\alpha\vert-1}(V\times U)$, $\partial_u^\beta J_uf\in C_{\ell-\vert\alpha\vert-1}(V\times U)$ for $\beta$ with $\vert\beta\vert=\vert \alpha\vert$, $\partial_{v_i}\partial_v^\nu f_{v_i}\in C_{\ell-\vert\alpha\vert-1}(V\times U)$, $\partial_u^\beta f_{v_i}\in C_{\ell-\vert\alpha\vert-1}(V\times U)$ for $\beta$ with $\vert\beta\vert=\vert\alpha\vert$ and $\partial_{v_i}\partial_v^\nu F^{-1}\in C_{\ell-\vert\alpha\vert}(V\times f(U))$, we have that
    \begin{equation}
        \partial_{v_i}\partial_v^\alpha f^{-1}\in C_{\ell-\vert\alpha\vert-1}(V\times f(U)).
    \end{equation}
    By induction on $\vert\alpha\vert$ we obtain the claim.

\subsection{Proof of Lemma~\ref{anisotropicimplicit}}

Let us begin by showing that $\phi\in C_l(V\times U)$ for all $\ell$. Since $\phi$ is once continuously differentiable, we have by implicit differentiation that
    \begin{equation}
        \begin{split}
            \partial_{u_i}\phi(v,u)&=-(J_wF)^{-1}(v,u,\phi(v,u))F_{u_i}(v,u,\phi(v,u))\\
            &=-(G\circ J_wF)(v,u,\phi(v,u))F_{u_i}(v,u,\phi(v,u)).
        \end{split}
    \end{equation}

    Suppose that $\phi\in C_s(V\times U)$ for some $s\ge 1$. Then by the regularity of $G$, $J_wF$ and $F_{u_i}$ we see that $\partial_{u_i}\phi\in C_s(V\times U)$ and hence that $\phi\in C_{s+1}(V\times U)$. By induction we get that $\phi\in C_l(V\times U)$ for all $l$ and hence that $\phi\in C_\infty(\Tilde{V}\times\Tilde{U})$.

    Replacing $u$ by $v$ and $\ell$ by $k$ in the preceding argument lets us conclude that $\phi\in C^k(V\times U)$. To show the regularity of the $u$-derivatives of $\phi$, we proceed by proving explicitly the cases $l=1$ and $l=2$ and by induction the cases $l\ge 3$.
    
    Suppose $l=1$. Using the formula
    \begin{equation}
        \partial_{u_i}\phi(v,u)=-(G\circ J_wF)(v,u,\phi(v,u))F_{u_i}(v,u,\phi(v,u)),
    \end{equation}
    the regularity of $G,\ J_wF$ and  $F_{u_i}$ and the fact that $\phi\in C^k(V\times U)$, we have that $\partial_{u_i}\phi\in C^k(V\times U)$ for all $i$, giving us the claim. 

    Suppose $l=2$. Then using the case $l=1$, we have for all $i,j\in\{1,\dots,n\}$ that
    \begin{equation}
        \partial_{u_j}\partial_{u_i}\phi(v,u)=-\partial_{u_j}\left((G\circ J_wF)(v,u,\phi(v,u))F_{u_i}(v,u,\phi(v,u))\right)\in C^k(V\times U)
    \end{equation}
    by the regularity of $G,J_wF$ and $\partial_{u_i}F$.

    Suppose then that $l\ge 3$. We show by induction that $\partial_u^\alpha\partial_{u_i}\phi\in C^k(V\times U)$ for all $\alpha$ with $\vert\alpha\vert\leq l-1$ and all $i\in\{1,\dots,n\}$, which then lets us conclude that $\partial_u^\nu\phi\in C^k(V\times U)$ for all $\nu$ with $\vert\nu\vert\leq l$. 

    Using the case $l=2$ lets us obtain the base case for the induction. Suppose then that $\partial_u^\alpha\partial_{u_i}\phi\in C^k_v$ for all $\alpha$ with $\vert\alpha\vert\leq r\leq l-2$. Then
    \begin{equation}
        \partial_{u_j}\partial_u^\alpha\partial_{u_i}\phi=-\partial_{u_j}\partial_u^\alpha\left((G\circ J_wF)(v,u,\phi(v,u))F_{u_i}(v,u,\phi(v,u))\right)\in C^k(V\times U)
    \end{equation}
    by the fact that $\partial_u^\beta\phi\in C^k(\Tilde{V}\times\Tilde{U})$ for all $\beta$ with $\vert\beta\vert\leq r+1$ and the regularity of $G,\ J_wF$ and $F_{u_i}$, letting us conclude that 
    \begin{equation}
        \partial_u^\mu\phi\in C^k(V\times U)
    \end{equation}
    for all $\mu$ with $\vert\mu\vert\leq r+1$, giving us the induction step and thus letting us conclude the proof.

\section{Technical Lemmas for the Geodesic X-Ray Transform}\label{xraytechnicalappendix}

Here we show that the proofs of the technical Lemmas 3.1 and 3.2 in~\cite{IM2023} hold true unchanged under the regularity assumptions of Corollary~\ref{geodesicxray}.

\subsection{Regularity of the Geodesic Flow}

We recall that the geodesic equation is
\begin{equation}\label{geodesiceqn}
    \Ddot{\gamma}^i+2G^i(\gamma,\Dot{\gamma})=0,
\end{equation}
where the geodesic spray coefficients $G^i$ are defined in local coordinates as
\begin{equation}
    G^i(x,y):=\frac{1}{4}g^{i\ell}\left(2\frac{\partial g_{jk}}{\partial x^\ell}-\frac{\partial g_{j\ell}}{\partial x^k}\right)y^jy^\ell.
\end{equation}

\subsubsection{Existence and Uniqueness}

We see that if $c_{ijk\ell}\in C^k(M)$, then $G^i\in C^{k-1}_\infty(TM\setminus\{0\})$ and hence the existence and uniqueness of solutions to the geodesic equation is guaranteed when $c_{ijk\ell}\in C^2(M)$.

\subsubsection{Dependence on Initial Data}

By the standard ODE theory and the machinery developed so far, if $c_{ijk\ell}\in C^k(M)$ with $k\ge 2$, the solutions of the geodesic equation depend $C^{k-1}$-smoothly on the initial data. Especially if we restrict ourselves to geodesics with a lowest point, the geodesics depend $C^{k-1}$-smoothly on the lowest point.

\subsection{Taylor Expansions and Weak Reversibility}

Based on the regularity of geodesics discussed above, we check that the weak reversibility condition
\begin{equation}
    \begin{cases}
        \dddot{r}(r_0,0)=0, \\
        \Ddot{\theta}(r_0,0)=0,
    \end{cases}
\end{equation}
the change of coordinates $t\mapsto r$
and the expansions
\begin{equation}
    \begin{split}
        &\Ddot{r}(r_0,t)=a(r_0)+E_1(r_0,t),\\
        &\Dot{r}(r_0,t)=a(r_0)t+E_2(r_0,t)\\
        & r(r_0,t)-r_0=\frac{a(r_0)t^2}{2}+E_3(r_0,t)
    \end{split}
\end{equation}
where $a(r_0):=\Ddot{r}(r_0,0)$, $ E_1(r_0,t)=\mathcal{O}(t^2)$, $E_2(r_0,t)=\mathcal{O}(t^3)$ and $E_3(r_0,t)=\mathcal{O}(t^4)$ as $t\to 0$ continue to hold when $c\in C^3(M)$. We also need that the above estimates are uniform in $r_0$ in any closed annulus containing $r_0$. This follows by compactness and continuous dependence on initial data.

Beginning with the weak reversibility condition,
we see that we need the solutions of the geodesic equation to be three times differentiable in time. By standard ODE theory this holds when the spray coefficients $G^i$ are of class $C^1(TM\mO)$, which holds when $c\in C^2(M)$. 

Moving on to the estimates, assuming that $\partial_t^4r$ exists continuously, which holds when $c_{ijk\ell}\in C^3(M)$, we can write the Taylor expansion
\begin{equation} \label{Error1}
    \Ddot{r}(r_0,t)=a(r_0)+E_1(r_0,t),
\end{equation}
where $a(r_0):=\Ddot{r}(r_0,0)$ and
\begin{equation}
    E_1(r_0,t)=\mathcal{O}(t^2)
\end{equation}
as $t\to 0$. This follows by the weak reversibility condition and repeated application of the fundamental theorem of calculus:
\begin{equation}
    \begin{split}
        \Ddot{r}(r_0,t)=\Ddot{r}(r_0,0)+\int_0^t\dddot{r}(r_0,s)\ ds =\Ddot{r}(r_0,0)+\int_0^t\int_0^s\partial_h^4r(r_0,h)\ dhds.
    \end{split}
\end{equation}
Here
\begin{equation}
    E_1(r_0,t):=\int_0^t\int_0^s\partial_h^4r(r_0,h)\ dhds=\mathcal{O}(t^2)
\end{equation}
as $t\to 0$ by the estimate
\begin{equation}
    \left\vert\int_0^t\int_0^s\partial_h^4r(r_0,h)\ dhds\right\vert\leq \Vert \partial_t^4r(r_0,\dummy)\Vert_{C^0([0,T_{\partial M}])}t^2,
\end{equation}
where $T_{\partial M}$ is defined as the finite boundary hitting time for the geodesic and 
\begin{equation}
    \Vert \partial_t^4r(r_0,\dummy)\Vert_{C^0([0,T_{\partial M}])}<\infty
\end{equation}
by continuity and compactness.

Integrating \eqref{Error1} gives us
\begin{equation}
    \Dot{r}(r_0,t)=a(r_0)t+E_2(r_0,t)\quad \text{and}\quad r(r_0,t)-r_0=\frac{a(r_0)t^2}{2}+E_3(r_0,t),
\end{equation}
where $E_2(r_0,t)=\mathcal{O}(t^3)$ and $E_3(r_0,t)=\mathcal{O}(t^4)$ as $t\to 0$.

\bibliographystyle{abbrv}
\bibliography{ref.bib}

\end{document}